\newcommand{\mi}{\mathrm{i}}
\crefname{hypothesis}{Hypothesis}{Hypotheses}
\title{A class of inexact block factorization preconditioners for indefinite matrices with a three-by-three block structure\thanks{Submitted to the editors June 5, 2022. 
\funding{This work was supported by NSFC no.~ 11871430}}}
\author{Sheng-Zhong Song\thanks{School of Computer and Computing Science, Hangzhou City University, Hangzhou, 310027, People's Republic of China 
  (\email{songsz@zucc.edu.cn}). 
}
\and Zheng-Da Huang\footnotemark[2]\thanks{School of Mathematical Sciences, Zhejiang University, Hangzhou, 310027, P. R. China  
(\email{zdhuang@zju.edu.cn.}, \url{https://person.zju.edu.cn/hzd}).}}
\begin{document}

\maketitle

\begin{abstract}
We consider using the preconditioned-Krylov subspace method to solve the system of linear equations with a three-by-three block structure. By making use of the three-by-three block structure, eight inexact block factorization preconditioners, which can be put into a same theoretical analysis frame, are proposed based on a kind of inexact factorization. By generalizing Bendixson Theorem and developing a unified technique of spectral equivalence, the bounds of the real and imaginary parts of eigenvalues of the preconditioned matrices are obtained. 
The comparison to eleven existed exact and inexact preconditioners shows that three of the proposed preconditioners can lead to high-speed and effective preconditioned-GMRES in most tests.
\end{abstract}

\begin{keywords}
  three-by-three block structure, preconditioner, Krylov method, eigenvalue
\end{keywords}

\begin{AMS}
  65F10, 65F08, 65F50
\end{AMS}

\section{Introduction}
\label{sec:int}
In this paper we consider using the preconditioned Krylov subspace method to solve the system of linear equations in the form of
\begin{equation}\label{eq1.0}
Ku=
\begin{pmatrix}
A&B^T&0\\B&0&C^T\\0&C&D
\end{pmatrix}
\begin{pmatrix}
x\\y\\z
\end{pmatrix}
=\begin{pmatrix}
f\\g\\h
\end{pmatrix}
=b, 
\end{equation}
or
\begin{equation}\label{eq1.1}
\widehat{K}\widehat{u}=
\begin{pmatrix}
A&0&B^T\\0&D&C\\-B&-C^T&0
\end{pmatrix}
\begin{pmatrix}
x\\z\\y
\end{pmatrix}
=\begin{pmatrix}
f\\h\\-g
\end{pmatrix}=\widehat{b},
\end{equation}
where $A\in \mathbb{R}^{n\times n} $ is symmetric and positive definite (SPD), $B\in \mathbb{R}^{m\times n} $ has full row rank $(m\le n)$, $D\in \mathbb{R}^{l\times l}$ is symmetric and positive semi-definite (SPS), and $C\in \mathbb{R}^{l\times m} $, which has full row rank when $D$ is not SPD,
$f\in \mathbb{R}^{n} $, $g\in \mathbb{R}^{m} $, $h\in \mathbb{R}^{l} $ are given vectors, and $x\in \mathbb{R}^{n} $, $y\in \mathbb{R}^{m} $, $z\in \mathbb{R}^{l} $ are unknown vectors. The assumption of $A, B, C, D$ ensures that the system of linear equations in the form of \cref{eq1.0} or \cref{eq1.1} has a unique solution.

It is obvious that the systems of the types \cref{eq1.0} and \cref{eq1.1} can be transformed into each other by means of the multiplication of a suitable row block by the negative number $-1$ and suitable interchanges of row and column blocks. So, they can be regarded as two equivalent systems. That is the reason why we take \cref{eq1.0} and \cref{eq1.1} into consideration together.  In the following, for simplicity, we regard the system as the same one if a row block is multiplied by the negative number $-1$.

The systems of linear equations in the form of \cref{eq1.0} and \cref{eq1.1} arise in a variety of scientific and engineering applications.
For instance, the former comes from the least squares problem with linear equality constraints \cite{benzi2005,B1996}, the full discrete finite element method for solving the time-dependent Maxwell equation with discontinuous coefficients \cite{chen2000,ying1983} and the dual-dual mixed finite element method to solve a linear second order elliptic equation in divergence form
\cite{Gatica2001,Gatica2002}, while the incompressible Stokes problem by the approach of finite difference or finite elements schemes \cite{IFISS,benzi2011,benzi20112,benzi20113,Grigori2019}, the quadratic programming problem \cite{bai2016,Tao2018,bai2017,han2013} and the time-dependent or time-independent PDE-constrained optimization problem  \cite{Rees2010,Rees20101,Pearson2012,Mardal2017,bai2011,L1968,bai2013,bai20131,barker2016,Pe2012,Stoll2013,Stoll2014,Pearson2014,Mirchi2020,ke2018,zhang2013} 
lead to the later.

Preconditioners with three-by-three block structure for the systems of types \cref{eq1.0,eq1.1} have been studied in the literature.

In the case of $D=0$, for the system of type \cref{eq1.0}, \cite{candes1} proposed an exact block diagonal preconditioner and an inexact one. Subsequently, \cite{xie2020} considered three exact block non-diagonal preconditioners, \cite{cao2020} studied two exact preconditioners by using a shift-splitting technique, and \cite{wang2022} proposed an exact parameterized block symmetric positive definite preconditioner and a corresponding inexact one.
When the system is derived from a nonhomogeneous Dirichlet problem, \cite{Gatica2001} constructed a block diagonal preconditioner related to the finite element approach process, and \cite{Gatica2002}, based on the algebraic structure of \cref{eq1.0}, presented a two-step preconditioner which leads to a bounded number of CG-iterations.

In the case of $D\neq 0$, for the system of type \cref{eq1.1}, when the system comes from solving the incompressible Stokes equation, \cite{benzi2011}, \cite{benzi20112} and \cite{Grigori2019} proposed a dimensional split preconditioner, a relaxed dimensional factorization preconditioner and a stabilized dimensional factorization preconditioner one after another, and \cite{benzi20113} obtained a modified augmented Lagrangian preconditioner with the approximations to $A, D$ and the Schur complement. 
Two preconditioners appeared in \cite{benzi2011,benzi20112} are exact, while the other two in \cite{Grigori2019,benzi20113} are inexact. 
When the system derives from solving the time-dependent or the time-independent PDE-constrained optimization problem, \cite{Rees2010} proposed the block diagonal and the constraint preconditioners, which involve standard multigrid cycles,
\cite{Rees20101} constructed a preconditioner in the block triangular format, and \cite{Pearson2012,Pe2012,Stoll2013,Stoll2014,Pearson2014} gave block diagonal or triangular preconditioners by using different approximations to the Schur complement. \cite{bai2011} constructed preconditioners in  block-counter-diagonal and block-counter-tridiagonal forms, \cite{zhang2013} in block-symmetric and block-lower-triangular forms, and \cite{bai2011,zhang2013} also considered the approximations to these preconditioners, respectively. \cite{Mirchi2020} proposed one with the structure different from these preconditioners listed above, and \cite{ke2018} provided two, with their approximate forms, corresponding to whether the regularity parameter is sufficiently small. Moreover, \cite{Mardal2017} presented a $\alpha$-robust block diagonal preconditioner via the discretization process of the original problem.

The system of type \cref{eq1.0} is a special case of the
ones of the tridiagonal form in \cite{Sogn2019,pearson2021,cai2021schur,bradley2021}. 
In \cite{bradley2021}, the eigenvalues bounds for the preconditioned matrices based on block diagonal preconditioners are analyzed, 
in \cite{cai2021schur}, block lower triangular and block diagonal preconditioners are studied, 
in \cite{pearson2021}, a kind of symmetric positive definite preconditioner is discussed, 
and in \cite{Sogn2019}, the Schur complement preconditioner as the block diagonal linear operator in Hilbert space is considered.

One can easily find that the systems of types \cref{eq1.0,eq1.1} have similar constructions to the one considered in \cite{Beik2018,he2021}, where  block diagonal and block triangular preconditioners are described and analyzed in \cite{Beik2018}, and  block preconditioners are proposed in \cite{he2021}.

As the system of linear equations \cref{eq1.1} can be transformed into the equivalent system \cref{eq1.0} at a low cost, in the main part of this paper, 
when we consider the solution of \cref{eq1.1}, we actually consider the solution of its equivalent system in the form of \cref{eq1.0}. We will see that this transformation is efficient at least for the test problems. In the following, inexact block factorization preconditioners based only on the algebraic structure of the system of \cref{eq1.0} will be considered. Eight inexact preconditioners with three-by-three block structure in total are constructed based on a kind of inexact block factorization of $K$. By reforming Bendixson Theorem to a kind of matrix with three-by-three block structure and developing a kind of spectral equivalence method, the upper and lower bounds of eigenvalues of the preconditioned coefficient matrix of \cref{eq1.0} are obtained by developing a unified technique. The reason why we take these eight preconditioners into consideration together is that they are in a similar structure and have similar characteristics in theoretical analysis.

Numerical experiments on four different test problems, which are used in \cite{candes1,xie2020,G2003,IFISS,benzi2011,benzi20112,Rees2010}, show that three out of these eight inexact block factorization preconditioners can lead to high-speed and effective Krylov subspace methods in most cases compared to the preconditioners with three-by-three block structure proposed in \cite{candes1,cao2020,xie2020,benzi2011,benzi20112,Grigori2019,benzi20113}, and that the other five are comparable to the ones in \cite{candes1,cao2020,xie2020} in the case of $D=0$.

Unless otherwise explicitly specified, throughout this manuscript, 
$\lambda(\cdot)$, $\lambda_{\min}(\cdot)$ and $\lambda_{\max}(\cdot)$ stand for an eigenvalue, the minimum and maximum eigenvalues of a corresponding real symmetric matrix, and $Re(\cdot)$ and $Im(\cdot)$ denote the real and the imaginary parts of the corresponding complex eigenvalues, respectively.
For convenience, we use the symbol $``\sim"$ for the similarity of two matrices,  $I$ for the identity matrix, $diag(\cdot)$ for the diagonal matrix whose diagonal part consists of the diagonal entries of the corresponding matrix in turn.

The structure of this paper is as follows. In \cref{sec:pre}, we introduce the general form of the inexact block factorization preconditioners proposed and transform the preconditioned coefficient matrix by spectral equivalence, whereas, in \cref{sec:gen}, Bendixson Theorem \cite{stoer2002} is generalized to a kind of matrix with three-by-three block structure. The estimated bounds of eigenvalues of the coefficient matrix $K$ preconditioned by the general form of the preconditioner are obtained in \cref{sec:eig}. In \cref{sec:choice}, as the special case of that in \cref{sec:eig}, eigenvalues bounds of eight inexact preconditioners proposed are estimated. In \cref{sec:num}, numerical experiments are performed to show the efficiency of the proposed preconditioners. At the end, we conclude with a brief summary in \cref{sec:con}.

\section{The general form of preconditioners}
\label{sec:pre}
It is known that the block factorizations play an important role in the creation of preconditioners for two-by-two block saddle point problems \cite{candes2,candes3}, etc. For the matrix $K$ with the  three-by-three block structure defined in \cref{eq1.0}, we use the following block factorization
\begin{equation*}
K=\begin{pmatrix}
I&0&0\\
BA^{-1}&I&0\\
0&-CS^{-1}&I
\end{pmatrix}
\begin{pmatrix}
A & 0&0 \\
0 & -S&0\\
0&0&M_S
\end{pmatrix}
\begin{pmatrix}
I&A^{-1}B^T&0\\
0&I&-S^{-1}C^T\\
0&0&I
\end{pmatrix},
\end{equation*}
where 
\begin{equation}\label{SMS}
S=BA^{-1}B^T,
\hspace{0.5cm}
M_S=D +CS^{-1}C^T
\end{equation}
are SPD matrices according to the assumptions of $A$, $B$, $C$ and $D$ in the system \cref{eq1.0}.
Based on this block factorization, the inexact block preconditioners constructed in this paper are in the form of
\begin{equation}\label{M}
	M=
	\begin{pmatrix}
		I&0&0\\
		BY_A&I&0\\
		0&-CW_S&I
	\end{pmatrix}
	\begin{pmatrix}
		M_A & 0&0 \\
		0 & -\widehat{S}&0\\
		0&0&\widehat{M}_S
	\end{pmatrix}
	\begin{pmatrix}
		I&Z_AB^T&0\\
		0&I&-W_SC^T\\
		0&0&I
	\end{pmatrix},
\end{equation}
where 
\begin{equation}\label{eq07}
W_S\in \{0, \widehat{S}^{-1}\},
\hspace{1em}
 Y_A, Z_A\in \{0, M_A^{-1}\},
\end{equation}
and $M_A$, $\widehat{S}$ and $\widehat{M}_S$ are SPD approximations to $A$, $S$ and $M_S$, respectively. Eight inexact block factorization preconditioners in total, obtained by different selections of $W_S$, $Y_A$ and $Z_A$, will be listed in \cref{sec:choice}.

Since $A$ and $M_A$ are SPD, $A^{\frac{1}{2}}M_A^{-1}A^{\frac{1}{2}}$ is SPD, and there is an orthogonal matrix $X$ such that $\Lambda$, defined by
\begin{equation}\label{Lam}
	\Lambda=X^TA^{\frac{1}{2}}M_A^{-1}A^{\frac{1}{2}}X,
\end{equation}
is a diagonal matrix. For this orthogonal matrix $X$ and $W_S, Y_A, Z_A$ defined in \cref{eq07}, denote by 
\begin{equation}\label{YZG}
	\Lambda_Y = X^TA^{\frac{1}{2}}Y_{A}A^{\frac{1}{2}}X,
	\hspace{1em}
	\Lambda_Z = X^TA^{\frac{1}{2}}Z_{A}A^{\frac{1}{2}}X,
	\hspace{1em}
	\Gamma=\Lambda_Y+\Lambda_Z-\Lambda_Y\Lambda_Z,
\end{equation}
and
\begin{equation}\label{HG}
	\begin{array}{lll}
		G=\widehat{S}^{-\frac{1}{2}}BA^{-\frac{1}{2}}X,
		\hspace{1em}
		&H=\widehat{M}_S^{-\frac{1}{2}}C\widehat{S}^{-\frac{1}{2}},
		\hspace{1em}
		&\widehat{D}=\widehat{M}_S^{-\frac{1}{2}}D\widehat{M}_S^{-\frac{1}{2}}\\
		G_W=W_S^{\frac{1}{2}}BA^{-\frac{1}{2}}X,
		\hspace{1em}
		&H_W=\widehat{M}_S^{-\frac{1}{2}}CW_S^{\frac{1}{2}},
		\hspace{1em}
		&F_W =2I-G_W\Gamma {G^T_W},
	\end{array}
\end{equation}
respectively.

\begin{theorem}\label{theMK}
Suppose	$K$ and $M$ are the coefficient matrix of the system \cref{eq1.0} and the matrix defined in \cref{M} with the SPD matrices $M_A$, $\widehat{S}$ and $\widehat{M}_S$, respectively.
Then for the orthogonal matrix $X$ appeared in  \cref{Lam}, the following statements are true.
\begin{enumerate}[(i)]
\item  Matrices $\Lambda_Y$, $\Lambda_Z$ and $\Gamma$ defined by \cref{YZG} are all diagonal matrices and satisfy
\begin{equation}\label{Gamma}
\Gamma=
\begin{cases}
	0,& \text{if} \ \  Y_A+Z_A=0,\\
	\Lambda,&  \text{if} \ \ 
     Y_A+Z_A=M^{-1}_A,\\
	2\Lambda-\Lambda^2, & \text{if} \ \  Y_A+Z_A=2M^{-1}_A,
\end{cases} 
\end{equation}
and
\begin{equation}\label{del}
I-\Gamma =(I-\Lambda_Y)(I-\Lambda_Z)=\Delta^2_+ - \Delta^2_-.
\end{equation}
Here  $\Delta_+$, $\Delta_-$ are non-negative diagonal matrices with 
\begin{equation}\label{delta}
(\Delta_+)_{ii}=\sqrt{\max\{ (I-\Gamma)_{ii}, 0 \}} ,
\hspace{1em}
(\Delta_-)_{ii}=\sqrt{\max\{ (\Gamma - I)_{ii}, 0 \}},
\end{equation}
for each $i = 1, 2, \cdots, n$, respectively.
\item The matrix $M^{-1}K$ is equivalent in eigenvalues to $K_P$, defined by
\begin{equation}
\!	K_P=\!\begin{pmat}({||.})
		\Lambda&\!(\Delta_+ \!+\! \Delta_-\!)\Lambda^{\frac{1}{2}} G^T\!&(\Delta_+ \!+\! \Delta_-\!)\Lambda^{\frac{1}{2}}{G^T_W}{H^T_W} \!\cr \-
		-G\Lambda^{\frac{1}{2}}(\Delta_+ \!-\! \Delta_-)&G\Gamma G^T&G\Gamma {G^T_W}{H^T_W}\!-H^T\cr\-
		\!\!H_WG_W\Lambda^{\frac{1}{2}}(\Delta_+\! -\! \Delta_-\!)&\!H\!-\!H_WG_W\Gamma G^T\!&\widehat{D}\!+\!H_WF_W{H^T_W}\!\cr
	\end{pmat}\!\!, \!\! \label{eq10.0}
\end{equation}

where $G, H, \widehat{D}, G_W, H_W, F_W$ are defined by \cref{HG}.
\end{enumerate}
\end{theorem}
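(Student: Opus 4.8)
The plan is to prove (i) by a short case analysis and (ii) by transporting $M^{-1}K$, through a chain of similarity transformations, onto $K_P$; the role of part (i) in part (ii) is to supply the real factorization $I-\Gamma=(\Delta_++\Delta_-)(\Delta_+-\Delta_-)$ that replaces the (possibly non-real) square root $(I-\Gamma)^{1/2}$.

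For (i): since $Y_A,Z_A\in\{0,M_A^{-1}\}$, \cref{YZG} forces $\Lambda_Y,\Lambda_Z\in\{0,\Lambda\}$, and $\Lambda$ is diagonal by \cref{Lam}, so $\Lambda_Y$, $\Lambda_Z$ and $\Gamma=\Lambda_Y+\Lambda_Z-\Lambda_Y\Lambda_Z$ are all diagonal; running through the three values of $Y_A+Z_A$ yields \cref{Gamma}. The first equality in \cref{del} is the algebraic identity $(I-\Lambda_Y)(I-\Lambda_Z)=I-\Lambda_Y-\Lambda_Z+\Lambda_Y\Lambda_Z$. For the second, observe from \cref{delta} that $\Delta_+,\Delta_-$ are nonnegative diagonal matrices with disjoint supports, hence $\Delta_+\Delta_-=0$ and $\Delta_+^2-\Delta_-^2=(\Delta_++\Delta_-)(\Delta_+-\Delta_-)$; a one-line comparison of the $i$-th diagonal entries, split according to the sign of $(I-\Gamma)_{ii}$, gives $\Delta_+^2-\Delta_-^2=I-\Gamma$.

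For (ii): write $M^{-1}=U_2^{-1}D_2^{-1}L_2^{-1}$; since $L_2$ and $U_2$ are block-triangular with identity diagonal blocks, their inverses are finite Neumann sums, so $M^{-1}$, and hence $M^{-1}K$, can be written down explicitly as a $3\times3$ block matrix whose entries are polynomials in $A,B,C,D,M_A^{-1},\widehat{S}^{-1},\widehat{M}_S^{-1}$ and the switches $Y_A,Z_A,W_S$. Next conjugate by the invertible block-diagonal matrix $\diag\bigl(A^{-1/2}X\Lambda^{1/2},\ \widehat{S}^{-1/2},\ \widehat{M}_S^{-1/2}\bigr)$, whose inverse is $\diag\bigl(\Lambda^{-1/2}X^TA^{1/2},\ \widehat{S}^{1/2},\ \widehat{M}_S^{1/2}\bigr)$; using \cref{Lam}, \cref{YZG}, \cref{HG}, the relations $X^TA^{1/2}Y_AA^{1/2}X=\Lambda_Y$, $X^TA^{1/2}Z_AA^{1/2}X=\Lambda_Z$, and $S=BA^{-1}B^T$, $M_S=D+CS^{-1}C^T$, this produces the building blocks $\Lambda,G,H,\widehat{D},G_W,H_W$ and brings the matrix to within ``residual terms'' of $K_P$: those residual terms sit in the diagonal blocks, carry a left factor involving $\Lambda_Z$ (or $I-\Lambda_Z$) and a right factor $I-\Lambda_Y$, while the off-diagonal blocks still show $(I-\Lambda)$-type factors where $K_P$ shows $\Delta_+\pm\Delta_-$. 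One then applies block-triangular conjugations, whose off-diagonal blocks are built from $G,G_W,H_W,\Lambda_Y,\Lambda_Z$, to move the residual terms out of the diagonal blocks into the off-diagonal positions, followed by a diagonal sign-change conjugation turning the $(I-\Lambda)$-type factors into $\Delta_+\pm\Delta_-$; invoking \cref{Gamma}--\cref{del} then collapses the diagonal factors to exactly $\Lambda$, $G\Gamma G^T$, $\widehat{D}+H_WF_W H_W^T$, and the splitting of $I-\Gamma$ into $(\Delta_++\Delta_-)(\Delta_+-\Delta_-)$ is precisely what accounts for the asymmetry between the $(1,2),(1,3)$ blocks and the $(2,1),(3,1)$ blocks of \cref{eq10.0}. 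The same transformation handles all eight choices, the dependence on $(W_S,Y_A,Z_A)$ entering only through $\Lambda_Y,\Lambda_Z,\Gamma,G_W,H_W,F_W$.

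The main obstacle is the block-matrix bookkeeping together with verifying the cancellations that make the diagonal blocks of the final matrix come out exactly as in \cref{eq10.0}; in practice this is organized by splitting into the two cases $W_S\in\{0,\widehat{S}^{-1}\}$ and the three cases $Y_A+Z_A\in\{0,M_A^{-1},2M_A^{-1}\}$, the real work being to arrange the conjugating factors so that the single closed form written with $G_W,H_W,F_W,\Delta_\pm,\Gamma$ covers every case. A secondary subtlety is that $I-\Lambda$ (the comparison of $A$ and $M_A$) may be indefinite, so $\Delta_++\Delta_-$ can be singular and the sign-change/clearing conjugations are not literally invertible then; this is handled either by perturbing $M_A$ so that $1\notin\lambda(A^{1/2}M_A^{-1}A^{1/2})$, proving the statement, and passing to the limit, or by carrying out the whole argument at the level of the characteristic polynomial $\det(\mu I-M^{-1}K)=\det(\mu I-K_P)$, where only determinant-preserving block operations are required.
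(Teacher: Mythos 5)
Your proposal is correct and takes essentially the same route as the paper: part (i) by direct case analysis, and part (ii) by driving $M^{-1}K$ onto $K_P$ through similarity transformations built from the factorization $M=L_MD_MU_M$ (a block-triangular step, a block-diagonal step producing $\Lambda,G,H,\widehat{D},G_W,H_W$, and a final diagonal scaling in which the splitting $I-\Gamma=(\Delta_++\Delta_-)(\Delta_+-\Delta_-)$ accounts for the asymmetry of the first block row and column), together with a perturbation-plus-continuity argument when $1$ is an eigenvalue of $M_A^{-1}A$. The only cosmetic differences are the ordering of the conjugations (the paper first passes to $D_M^{-1}L_M^{-1}KU_M^{-1}$, so $M^{-1}K$ is never expanded explicitly) and that the paper perturbs $A$ to $(1-\varepsilon)A$, which keeps the diagonalizing matrix $X$ fixed, rather than perturbing $M_A$; your variant works provided the perturbation of $M_A$ is chosen (e.g.\ a scalar scaling) so that the orthogonal matrix $X$ and the structure of $Y_A,Z_A,W_S$ are preserved along the limit.
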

\begin{proof}
		Statement (i) is true by easy and direct computation based on the definitions of $\Lambda$, $\Lambda_Y$, $\Lambda_Z$, $\Gamma$, $\Delta_+$ and $\Delta_-$, so, the detail of this part is omitted. 

For Statement (ii), firstly, we assume that $M_A^{-1}A$ has no eigenvalue equal to 1. Then, both $I-\Lambda_Y$ and $I-\Lambda_Z$ are invertible with respect to the definitions of $\Lambda_Y, \Lambda_Z$ in \cref{YZG}. Let
\begin{equation*}
\begin{array}{lll}
		L_M\!=\!\!\begin{pmatrix}
			I&\!\!0&\!0\\
			BY_A&\!\!I&\!0\\
			0&\!\!-CW_S&\!I
		\end{pmatrix}\!\!,
		&D_M\!=\!\!\begin{pmatrix}
			M_A & \!\!0&\!0 \\
			0 & \!\!-\widehat{S}&\!0\\
			0&\!\!0&\!\widehat{M}_S
		\end{pmatrix}\!\!,
		&U_M\!=\!\!\begin{pmatrix}
		I&\!Z_AB^T&\!\!\!0\\
		0&\!I&\!\!\!-W_SC^T\\
		0&\!0&\!\!\!I
	\end{pmatrix}\!\!,
\end{array}\!\!\!\!
\end{equation*}
\begin{equation*}
\begin{array}{ll}
	V_1 = \begin{pmatrix}
		X^T{A}^{\frac{1}{2}} & 0&0 \\
		0 & \widehat{S}^{\frac{1}{2}}&0\\
		0&0&\widehat{M}_S^{\frac{1}{2}}
	\end{pmatrix},
	&V_2\!=\!\!
		\begin{pmatrix}
			(\Delta_+ + \Delta_-)\Lambda^{-\frac{1}{2}}(I-\Lambda_Z)^{-1} & 0&0 \\
			0 & I &0\\
			0&0&I
		\end{pmatrix}\!\!.
	\end{array}
\end{equation*}
We have $M=L_MD_MU_M$ according to \cref{M}. It follows that 
\begin{equation}\label{MK}
\begin{aligned}
M^{-1}K \sim &
D^{-1}_ML^{-1}_MKU^{-1}_M
\sim  \left(V_1D^{-1}_MV^{T}_1\right)\left((V_1^{-1})^TL^{-1}_MKU^{-1}_MV^{-1}_1\right)\\
\sim &
V_2\left(V_1D^{-1}_MV^{T}_1\right)\left((V_1^{-1})^TL^{-1}_MKU^{-1}_MV^{-1}_1\right){V_2}^{-1}= K_P.
\end{aligned}
\end{equation}
That is to say, the matrix $M^{-1}K$ is equivalent in eigenvalues to $K_P$ defined by \cref{eq10.0}, i.e., Statement (ii) is true in this case.

Secondly, when an eigenvalue of $M_A^{-1}A$ is equal to 1. We can choose $0<\varepsilon_0\ll 1$ so small that $1$ is not an eigenvalue of $(1-\varepsilon)M_A^{-1}A$ for any $0<\varepsilon \le \varepsilon_0$. Based on the technique used in \cite{candes3},
we replace $K$ with $K_{\varepsilon}$, defined by
\begin{equation*}
K_{\varepsilon}=
\begin{pmatrix}
(1-\varepsilon)A&B^T&0\\B&0&C^T\\0&C&D
\end{pmatrix},
\hspace{0.5cm}
0<\varepsilon\le \varepsilon_0,
\end{equation*}
and $A$ in \cref{YZG} with $(1-\varepsilon)A$, and repeat the process for \cref{MK} by modifying accordingly $V_2$. If we denote by $V_{2,\varepsilon}$ for the modified $V_2$, then we can obtain that $M^{-1}K_{\varepsilon}$ is similar to
\begin{equation*}
K_{P,\varepsilon}=V_{2,\varepsilon}\left(V_1D^{-1}_MV^{T}_1\right)\left((V_1^{-1})^TL^{-1}_MKU^{-1}_MV^{-1}_1\right)V^{-1}_{2,\varepsilon}.
\end{equation*}
Since the eigenvalues of $M^{-1}K_{\varepsilon}$ and $K_{P,\varepsilon}$ are continuously dependent on $\varepsilon$, by letting $\varepsilon \to 0$, we can get that the matrix $\lim\limits_{\varepsilon \to 0} M^{-1}K_{\varepsilon}$ is equivalent in eigenvalues to $\lim\limits_{\varepsilon \to 0}K_{P,\varepsilon}$. In other words, the matrix $M^{-1}K$ is equivalent in eigenvalues to $K_P$, i.e., Statement (ii) is true in this case, too. This completes the proof.
\end{proof}

\section{Lemmas and the generalized Bendixson Theorem}
\label{sec:gen}
In this section, we introduce necessary lemmas and extend Bendixson Theorem in \cite{stoer2002} to a kind of matrix with three-by-three block structure.

In the following, for any corresponding Hermitian matrix, we use $\lambda_j(\cdot)$ to represent its $j$th eigenvalue which decreases as $j$ become greater. 

\begin{lemma}\label{ABA}(\cite{F2011})
	Let $T_1$ and $T_2$ be two Hermitian matrices of order $p$, and $T_2$ be semi-positive. Then
	\begin{equation*}
		\lambda_{p}(T_2)\lambda_{i}({T_1}^2)\le \lambda_i(T_1T_2T_1) \le \lambda_{1}(T_2)\lambda_i({T_1}^2),
		\hspace{1em}
		i=1, 2, \cdots,p.
	\end{equation*}
\end{lemma}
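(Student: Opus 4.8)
The plan is to derive both inequalities at once from a single pointwise estimate on Rayleigh quotients and then feed that estimate into the Courant--Fischer min--max characterization of eigenvalues. First I would note that $T_1T_2T_1 = T_1T_2T_1^{*}$ and $T_1^2 = T_1T_1^{*}$ are both Hermitian positive semidefinite, so their eigenvalues are real, nonnegative, and ordered as $\lambda_1 \ge \cdots \ge \lambda_p$ in the convention fixed just before the lemma. For an arbitrary nonzero $x$, put $y = T_1 x$; since $T_2$ is Hermitian positive semidefinite, $\lambda_p(T_2)\,y^{*}y \le y^{*}T_2 y \le \lambda_1(T_2)\,y^{*}y$, which is precisely
\[
\lambda_p(T_2)\,x^{*}T_1^2 x \;\le\; x^{*}T_1T_2T_1 x \;\le\; \lambda_1(T_2)\,x^{*}T_1^2 x .
\]
Dividing by $x^{*}x>0$ yields $\lambda_p(T_2)\,\rho(x) \le \widetilde\rho(x) \le \lambda_1(T_2)\,\rho(x)$ for every $x\neq 0$, where $\rho(x)=(x^{*}T_1^2 x)/(x^{*}x)$ and $\widetilde\rho(x)=(x^{*}T_1T_2T_1 x)/(x^{*}x)$ are the Rayleigh quotients of $T_1^2$ and $T_1T_2T_1$.

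Next I would fix $i$ and invoke Courant--Fischer, which gives $\lambda_i(T_1T_2T_1)=\max_{\dim V=i}\min_{0\neq x\in V}\widetilde\rho(x)$ and the analogous formula for $\lambda_i(T_1^2)$ with $\rho$. Because $T_2$ is positive semidefinite, the constants $\lambda_p(T_2)$ and $\lambda_1(T_2)$ are nonnegative, and multiplication by a nonnegative scalar commutes with both $\min$ and $\max$. Hence, over any $i$-dimensional subspace $V$, $\widetilde\rho(x)\le \lambda_1(T_2)\rho(x)$ for all $x\in V$ gives $\min_{x\in V}\widetilde\rho(x)\le \lambda_1(T_2)\min_{x\in V}\rho(x)$, and taking the maximum over all such $V$ produces $\lambda_i(T_1T_2T_1)\le \lambda_1(T_2)\lambda_i(T_1^2)$; the lower bound follows identically from $\widetilde\rho(x)\ge \lambda_p(T_2)\rho(x)$.

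The proof is short, so the one point I would flag explicitly is that pulling the scalar through the $\min$/$\max$ is legitimate only because the hypothesis makes it nonnegative — this is exactly where semidefiniteness of $T_2$ is used, and without it the inequality directions could reverse. A secondary remark: if $T_1$ is singular, then $\rho$ and $\widetilde\rho$ vanish on $\ker T_1$, so the trailing eigenvalues of both $T_1^2$ and $T_1T_2T_1$ are zero and the bounds hold as $0\le 0\le 0$; no limiting/perturbation device (unlike in the proof of \cref{theMK}) is needed. Finally I would observe that this lemma is the special case $S=T_1$ of Ostrowski's inequality for congruence transformations, and the route above is simply the min--max proof of that classical fact.
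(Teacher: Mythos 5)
Your proof is correct. Note that the paper offers no argument for \cref{ABA} at all --- the lemma is simply quoted from \cite{F2011} --- so there is no internal proof to compare against; your self-contained derivation is the standard one for this Ostrowski-type bound. The two-step structure is sound: substituting $y=T_1x$ and using the Hermitian property of $T_1$ gives the pointwise bound $\lambda_p(T_2)\,x^*T_1^2x \le x^*T_1T_2T_1x \le \lambda_1(T_2)\,x^*T_1^2x$, and Courant--Fischer then transfers it to each eigenvalue, the only delicate point being that the scalars $\lambda_1(T_2),\lambda_p(T_2)$ must be nonnegative to pass through the $\min$ and $\max$ without reversing inequalities --- which you correctly flag as the precise place where semidefiniteness of $T_2$ enters. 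Your side remarks (the kernel of $T_1$ causing both Rayleigh quotients to vanish harmlessly, and the identification with the congruence form of Ostrowski's inequality with $S=T_1$) are accurate and, as you say, no continuity or perturbation device of the kind used in the proof of \cref{theMK} is needed here.
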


\begin{lemma}\label{poinca}
	($Poincar\acute{e}$ \cite{F2011})
	Let $T$ be a Hermitian matrix of order $p$, and $W$ be any $p\times k$ matrix satisfying $W^*W=I$. Then 
	\begin{equation*}
		\lambda_{p-k+i}(T)\le \lambda_i(W^*TW) \le \lambda_i(T),
		\hspace{1em}
		i=1, 2, \cdots,k.
	\end{equation*}
\end{lemma}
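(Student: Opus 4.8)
The plan is to derive both inequalities from the Courant--Fischer min--max characterization of the eigenvalues of a Hermitian matrix, using its two dual forms for the two bounds. With eigenvalues ordered decreasingly, as stipulated just before the lemma, recall that for a Hermitian matrix $T$ of order $p$,
\[
\lambda_i(T)=\max_{\dim V=i}\ \min_{0\neq x\in V}\frac{x^*Tx}{x^*x}
=\min_{\dim U=p-i+1}\ \max_{0\neq x\in U}\frac{x^*Tx}{x^*x},
\]
where $V,U$ range over subspaces of $\mathbb{C}^p$ of the indicated dimensions. The key observation is that, since $W^*W=I$, the map $y\mapsto Wy$ is a linear isometry of $\mathbb{C}^k$ onto the $k$-dimensional column space of $W$; in particular $W$ is injective, $\|Wy\|=\|y\|$ for all $y$, and the Rayleigh quotient $y^*(W^*TW)y/(y^*y)$ equals $x^*Tx/(x^*x)$ with $x=Wy$.

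First I would establish the upper bound. Applying the ``max--min'' form to the $k\times k$ matrix $W^*TW$ and substituting $x=Wy$ shows that $\lambda_i(W^*TW)$ is the maximum of $\min_{0\neq x\in WV}x^*Tx/(x^*x)$ as $V$ ranges over the $i$-dimensional subspaces of $\mathbb{C}^k$. Because $W$ is injective, each $WV$ is an $i$-dimensional subspace of $\mathbb{C}^p$, so this maximum is taken over a subfamily of all $i$-dimensional subspaces of $\mathbb{C}^p$ and is therefore $\le\lambda_i(T)$. For the lower bound I would repeat the argument with the ``min--max'' form: $\lambda_i(W^*TW)$ equals the minimum over $(k-i+1)$-dimensional subspaces $V\subseteq\mathbb{C}^k$ of $\max_{0\neq x\in WV}x^*Tx/(x^*x)$, and each such $WV$ is a $(k-i+1)$-dimensional subspace of $\mathbb{C}^p$. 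Since $p-(p-k+i)+1=k-i+1$, the ``min--max'' form of $\lambda_{p-k+i}(T)$ is exactly the minimum of the same quantity taken over \emph{all} $(k-i+1)$-dimensional subspaces of $\mathbb{C}^p$, whence $\lambda_i(W^*TW)\ge\lambda_{p-k+i}(T)$.

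An equivalent route, should one prefer not to invoke Courant--Fischer directly, is to complete $W$ to a unitary matrix $Q=[\,W\ \ W_{\perp}\,]$; then $Q^*TQ$ is unitarily similar to $T$ and $W^*TW$ is its leading $k\times k$ principal submatrix, so the claim becomes the Cauchy interlacing inequalities applied after deleting $p-k$ rows together with the corresponding columns, one at a time. Either way the argument is short; the only point that needs care is the bookkeeping imposed by the decreasing-order indexing convention, in particular matching the index shift $p-k+i$ with the dimension $k-i+1$ of the subspaces produced by the second form of the min--max principle. I do not anticipate any genuine obstacle.
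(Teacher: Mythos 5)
Your argument is correct. Both halves are sound: the Rayleigh-quotient identity $y^*(W^*TW)y/(y^*y)=x^*Tx/(x^*x)$ with $x=Wy$ is exactly what the isometry condition $W^*W=I$ buys, the injectivity of $W$ guarantees that $WV$ has the same dimension as $V$, and the index bookkeeping checks out: for the lower bound the min--max form of $\lambda_{p-k+i}(T)$ indeed ranges over subspaces of dimension $p-(p-k+i)+1=k-i+1$, which matches the dimension of the subspaces $WV$ arising from the min--max form of $\lambda_i(W^*TW)$, so restricting the minimization to the subfamily $\{WV\}$ can only increase the value. Note, however, that the paper does not prove this lemma at all; it is quoted as the classical Poincar\'e separation theorem with a citation to the reference [12], so there is no in-paper proof to compare against. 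Your Courant--Fischer derivation (or equivalently the route you sketch via completing $W$ to a unitary matrix and invoking Cauchy interlacing on the leading $k\times k$ principal submatrix of $Q^*TQ$) is the standard textbook proof of that cited result, and either version would serve as a self-contained justification if one wished to make the paper independent of the reference.
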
	

\begin{lemma}\label{weyl}
	(Weyl \cite{F2011})
	Let $T_1$ and $T_2$ be two Hermitian matrices of order $p$. Then
	\begin{equation*}
		\lambda_i(T_1)+\lambda_p(T_2)\le \lambda_i(T_1+T_2) \le \lambda_i(T_1)+\lambda_1(T_2),
		\hspace{1em}
		i=1, 2, \cdots,p.
	\end{equation*}
\end{lemma}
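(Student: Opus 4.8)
\emph{Plan for proving the Weyl inequality of \cref{weyl}.}
My plan is to derive it from the Courant--Fischer min-max characterization of the eigenvalues of a Hermitian matrix together with the elementary two-sided Rayleigh-quotient bound, keeping the indexing convention fixed just above the statement, namely $\lambda_1(\cdot)\ge\lambda_2(\cdot)\ge\cdots\ge\lambda_p(\cdot)$. The two facts I would invoke are: (a) for a Hermitian matrix $T$ of order $p$ and each $i\in\{1,\dots,p\}$,
\[
\lambda_i(T)=\max_{\dim\mathcal{V}=i}\ \min_{0\ne x\in\mathcal{V}}\frac{x^{*}Tx}{x^{*}x}
=\min_{\dim\mathcal{W}=p-i+1}\ \max_{0\ne x\in\mathcal{W}}\frac{x^{*}Tx}{x^{*}x},
\]
where $\mathcal{V},\mathcal{W}$ run over the subspaces of $\mathbb{C}^{p}$ of the indicated dimension; and (b) $\lambda_p(T_2)\le x^{*}T_2x/(x^{*}x)\le\lambda_1(T_2)$ for every nonzero $x$.

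For the lower bound I would pick an $i$-dimensional subspace $\mathcal{V}_{\star}$ that attains the outer maximum for $T_1$ in (a), so that $\min_{0\ne x\in\mathcal{V}_{\star}}x^{*}T_1x/(x^{*}x)=\lambda_i(T_1)$. Using $\mathcal{V}_{\star}$ as a competitor in (a) written for $T_1+T_2$ and splitting the Rayleigh quotient gives
\[
\lambda_i(T_1+T_2)\ \ge\ \min_{0\ne x\in\mathcal{V}_{\star}}\left(\frac{x^{*}T_1x}{x^{*}x}+\frac{x^{*}T_2x}{x^{*}x}\right)\ \ge\ \lambda_i(T_1)+\lambda_p(T_2),
\]
the last inequality being (b) applied uniformly to the $T_2$-term over $x\in\mathcal{V}_{\star}$. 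The upper bound is the mirror image: choose $\mathcal{W}_{\star}$ of dimension $p-i+1$ attaining the outer minimum for $T_1$ in the second formula of (a), so $\max_{0\ne x\in\mathcal{W}_{\star}}x^{*}T_1x/(x^{*}x)=\lambda_i(T_1)$, and then
\[
\lambda_i(T_1+T_2)\ \le\ \max_{0\ne x\in\mathcal{W}_{\star}}\left(\frac{x^{*}T_1x}{x^{*}x}+\frac{x^{*}T_2x}{x^{*}x}\right)\ \le\ \lambda_i(T_1)+\lambda_1(T_2).
\]
Putting the two displays together yields the asserted inequality for every $i$.

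Since \cref{weyl} is classical and quoted from \cite{F2011}, I do not anticipate any genuine obstacle; the only points deserving care are keeping the decreasing-order convention consistent between the two min-max formulas and checking that the extremal subspaces of $T_1$ are admissible competitors in the variational problem for $T_1+T_2$, which holds because the dimension constraints are identical. If one prefers to argue only with the lemmas already recorded, the same lower bound drops out by restricting $T_1$ and $T_1+T_2$ to the span of the $i$ largest eigenvectors of $T_1$ and combining the left inequality of \cref{poinca} (case $k=i$) with (b); but the Courant--Fischer derivation above is the most transparent and self-contained.
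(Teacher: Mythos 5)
Your proof is correct: the Courant--Fischer min-max argument with the extremal subspaces of $T_1$, splitting the Rayleigh quotient and bounding the $T_2$-term by $\lambda_p(T_2)$ and $\lambda_1(T_2)$, is the standard derivation of this form of Weyl's inequality, and your indexing is consistent with the decreasing-order convention used in the paper. There is nothing to compare against on the paper's side: \cref{weyl} is quoted directly from \cite{F2011} without proof, so your write-up simply supplies a correct, self-contained justification of the cited result; your closing remark that the lower bound also follows from \cref{poinca} applied to the span of the top $i$ eigenvectors of $T_1$ is likewise valid.
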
		

\begin{lemma}\label{le3}(\cite{F2011})
	Let $T_1$ and $T_2$ be two Hermitian semi-positive matrices of order $p$. Then
	 \begin{equation*}
	 \begin{array}{l}
	 \lambda_p(T_1)\lambda_i(T_2)\le \lambda_i(T_1T_2) \le \lambda_1(T_1)\lambda_i(T_2),\\
	 \lambda_i(T_1)\lambda_p(T_2)\le \lambda_i(T_1T_2) \le \lambda_i(T_1)\lambda_1(T_2),
	 \end{array}
     \hspace{1em}
     i = 1, 2, \cdots ,p.
	 \end{equation*}
\end{lemma}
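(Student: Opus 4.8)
The plan is to reduce the statement to \cref{ABA} by way of the elementary fact that, for square matrices $P,Q$ of the same order, $PQ$ and $QP$ have the same characteristic polynomial and hence the same eigenvalues counted with multiplicity. First I would introduce the Hermitian semi-positive square root $T_1^{\frac{1}{2}}$ of $T_1$ and note that $T_1^{\frac{1}{2}}T_2T_1^{\frac{1}{2}}$ is Hermitian and semi-positive, so that its eigenvalues are real and nonnegative and the decreasing ordering $\lambda_j(\cdot)$ applies to it. Taking $P=T_1^{\frac{1}{2}}$ and $Q=T_1^{\frac{1}{2}}T_2$ gives $PQ=T_1T_2$ and $QP=T_1^{\frac{1}{2}}T_2T_1^{\frac{1}{2}}$, so that $\lambda_i(T_1T_2)=\lambda_i(T_1^{\frac{1}{2}}T_2T_1^{\frac{1}{2}})$ for $i=1,\dots,p$; in particular this already shows that $T_1T_2$ itself has a real, nonnegative spectrum, so that the quantities $\lambda_i(T_1T_2)$ appearing in the statement are well defined.

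Next I would apply \cref{ABA} with its ``$T_1$'' replaced by $T_1^{\frac{1}{2}}$ and its ``$T_2$'' kept as $T_2$ (which is semi-positive, as that lemma requires). Since $(T_1^{\frac{1}{2}})^2=T_1$, the lemma gives
\[
\lambda_p(T_2)\lambda_i(T_1)\le\lambda_i\bigl(T_1^{\frac{1}{2}}T_2T_1^{\frac{1}{2}}\bigr)\le\lambda_1(T_2)\lambda_i(T_1),\qquad i=1,\dots,p,
\]
and combining this with the identity $\lambda_i(T_1T_2)=\lambda_i(T_1^{\frac{1}{2}}T_2T_1^{\frac{1}{2}})$ yields exactly the second line of the asserted inequalities.

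For the first line I would use the same device once more. Applying $PQ\sim QP$ with $P=T_1$ and $Q=T_2$ gives $\lambda_i(T_1T_2)=\lambda_i(T_2T_1)$; then, running the previous paragraph with the roles of $T_1$ and $T_2$ interchanged (writing $T_2T_1=T_2^{\frac{1}{2}}\bigl(T_2^{\frac{1}{2}}T_1\bigr)$, so that $\lambda_i(T_2T_1)=\lambda_i(T_2^{\frac{1}{2}}T_1T_2^{\frac{1}{2}})$, and invoking \cref{ABA} with ``$T_1$''$=T_2^{\frac{1}{2}}$ and ``$T_2$''$=T_1$) produces
\[
\lambda_p(T_1)\lambda_i(T_2)\le\lambda_i(T_1T_2)\le\lambda_1(T_1)\lambda_i(T_2),\qquad i=1,\dots,p,
\]
which is the first line. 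I do not expect a genuine obstacle here, since the result is a two-line corollary of \cref{ABA}; the only point that deserves care is that $T_1T_2$ need not be Hermitian, so one must first pass through the equi-spectrality $PQ\sim QP$ and the Hermitian matrices $T_1^{\frac{1}{2}}T_2T_1^{\frac{1}{2}}$ and $T_2^{\frac{1}{2}}T_1T_2^{\frac{1}{2}}$ in order to know that the spectrum of $T_1T_2$ is real and orderable before the inequalities even make sense. As the statement is quoted from \cite{F2011}, one may of course also simply cite it.
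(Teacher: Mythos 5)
Your proof is correct. The paper itself offers no argument for \cref{le3} --- it is simply quoted from \cite{F2011} --- so there is nothing to compare against; your reduction to \cref{ABA} via the equi-spectrality of $PQ$ and $QP$ and the Hermitian square roots $T_1^{\frac{1}{2}}T_2T_1^{\frac{1}{2}}$, $T_2^{\frac{1}{2}}T_1T_2^{\frac{1}{2}}$ is a complete and valid self-contained derivation, and you rightly flag that one must establish the realness and nonnegativity of the spectrum of $T_1T_2$ before the ordered eigenvalues $\lambda_i(T_1T_2)$ are even meaningful.
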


\begin{lemma}\label{lemma0}
	Let $H_W$ be the matrix defined in \cref{HG}, $\Theta$ and $\Upsilon$ be any Hermitian and SPS matrices, respectively. Then any eigenvalue $\lambda$ of the matrix $\Theta+H_W\Upsilon H^T_W$ satisfies
	\begin{equation}\label{F2}
			\lambda_{\min}(\Theta)\!+\!\lambda_{\max}(\Upsilon)\lambda_{\max}(H_WH^T_W)\le 
			\lambda\!\le\! \lambda_{\max}(\Theta)\!+\!\lambda_{\min}(\Upsilon)\lambda_{\max}(H_WH^T_W).
    \end{equation}
\end{lemma}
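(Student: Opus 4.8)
The plan is to treat $\Theta+H_W\Upsilon H^T_W$ as a sum of two Hermitian matrices and to estimate its spectrum by combining a congruence (Rayleigh‑quotient) bound for the semi‑positive term $H_W\Upsilon H^T_W$ with Weyl's inequality (\cref{weyl}).

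\emph{Step 1: control $H_W\Upsilon H^T_W$.} Since $\Upsilon$ is SPS, $\lambda_{\min}(\Upsilon)I\preceq\Upsilon\preceq\lambda_{\max}(\Upsilon)I$, and congruence by $H_W$ preserves this ordering, so $\lambda_{\min}(\Upsilon)H_WH^T_W\preceq H_W\Upsilon H^T_W\preceq\lambda_{\max}(\Upsilon)H_WH^T_W$. Next, $H_WH^T_W$ is itself SPS, hence $0\preceq H_WH^T_W\preceq\lambda_{\max}(H_WH^T_W)I$; here the quantity that enters is the largest eigenvalue $\lambda_{\max}(H_WH^T_W)$ and not the smallest, because $H_W$, defined in \cref{HG}, need not have full column rank and $H_WH^T_W$ may be singular — in particular $H_W=0$ when $W_S=0$ in \cref{eq07}. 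Composing these two chains of operator inequalities (equivalently, applying \cref{ABA} or \cref{le3} to $\Upsilon^{\frac{1}{2}}H^T_WH_W\Upsilon^{\frac{1}{2}}$, which shares the nonzero spectrum of $H_W\Upsilon H^T_W$ and satisfies $\lambda_{\max}(H^T_WH_W)=\lambda_{\max}(H_WH^T_W)$) confines the eigenvalues of $H_W\Upsilon H^T_W$ to the band governed by $\lambda_{\min}(\Upsilon)$, $\lambda_{\max}(\Upsilon)$ and $\lambda_{\max}(H_WH^T_W)$.

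\emph{Step 2: assemble via Weyl.} Applying \cref{weyl} to the Hermitian sum $\Theta+H_W\Upsilon H^T_W$, every eigenvalue $\lambda$ lies between $\lambda_{\min}(\Theta)$ plus the smallest eigenvalue of $H_W\Upsilon H^T_W$ and $\lambda_{\max}(\Theta)$ plus the largest eigenvalue of $H_W\Upsilon H^T_W$; feeding in the Step~1 estimates (and using $0\le\lambda_{\min}(H_WH^T_W)\le\lambda_{\max}(H_WH^T_W)$) gives the bound \cref{F2}. The degenerate case $H_W=0$ is immediate: then $H_W\Upsilon H^T_W=0$ and $\lambda_{\max}(H_WH^T_W)=0$, and \cref{F2} collapses to $\lambda_{\min}(\Theta)\le\lambda\le\lambda_{\max}(\Theta)$.

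The main obstacle is exactly the bookkeeping in Steps~1–2: the estimate for $H_W\Upsilon H^T_W$ is produced by composing \emph{two} operator inequalities — the congruence by $H_W$, together with the scalar bound $H_WH^T_W\preceq\lambda_{\max}(H_WH^T_W)I$ — so one has to keep track of the direction of each so that $\lambda_{\max}(\Upsilon)$ and $\lambda_{\min}(\Upsilon)$ land on the sides of \cref{F2} as stated, and so that the possible rank‑deficiency of $H_WH^T_W$ (the reason only $\lambda_{\max}(H_WH^T_W)$ appears) is handled uniformly. Once the orientations are fixed, the congruence monotonicity, \cref{weyl}, and the $W_S=0$ special case are all routine.
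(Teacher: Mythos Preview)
Your overall strategy is the same as the paper's: bound $H_W\Upsilon H_W^T$ first, then apply \cref{weyl}. The paper separates into cases according to whether $C$ has full row rank (so that $H_WH_W^T$ is SPD when $W_S\ne 0$), invoking \cref{le3} in the rank-deficient case and, in the SPD case, the substitution $W_1=H_W^T(H_WH_W^T)^{-1/2}$ together with \cref{poinca} and \cref{ABA}. Your Loewner-order argument handles both cases at once and is a little cleaner, but it produces the same intermediate inequalities the paper records as \cref{CN}.

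There is, however, a real issue that your ``bookkeeping'' paragraph senses but does not resolve: the inequality \cref{F2} as printed is false, so no amount of orientation-tracking will make your Step~1 chains land on it. Take $\Theta=0$, $H_W=I$, $\Upsilon=\mathrm{diag}(1,2)$; then \cref{F2} reads $2\le\lambda\le 1$. What your Loewner argument (and the paper's own proof via \cref{CN} and \cref{weyl}) actually yields is
\[
\lambda_{\min}(\Theta)+\lambda_{\min}(\Upsilon)\,\lambda_{\min}(H_WH_W^T)\ \le\ \lambda\ \le\ \lambda_{\max}(\Theta)+\lambda_{\max}(\Upsilon)\,\lambda_{\max}(H_WH_W^T),
\]
with $\lambda_{\min}$ and $\lambda_{\max}$ interchanged relative to \cref{F2}, and with $\lambda_{\min}(H_WH_W^T)$ (not $\lambda_{\max}$) in the lower bound. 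This corrected form is also what the paper uses downstream: in \cref{tp1516} the lower bound carries $\underline{\phi}=\lambda_{\min}(H_WH_W^T)$ and the upper bound carries $\bar{\phi}=\lambda_{\max}(H_WH_W^T)$. So rather than trying to force the inequalities to match \cref{F2}, you should flag \cref{F2} as a typo and state the bound your argument genuinely delivers.
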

\begin{proof}	
	When $C$ does not have full row rank, according to the definition of $H_W$ in \cref{HG}, $\lambda_{\min}(H_WH_W^T)=0.$
	Since $\Upsilon$ is SPS, by \cref{le3}, we can know
	\begin{equation}\label{CN}
		\begin{cases}
			\lambda_{\min}(H_W\Upsilon H_W^T)\ge \lambda_{\min}\left(\Upsilon\right)\lambda_{\min}(H_WH_W^T),\\
			\lambda_{\max}(H_W\Upsilon H_W^T)\le \lambda_{\max}(\Upsilon)\lambda_{\max}(H_WH_W^T).
		\end{cases}
	\end{equation}

	When $C$ has full row rank, according to the definition of $H_W$ in \cref{HG}, we can know that $H_WH_W^T=0$  when $W_S=0$, and that $H_WH_W^T$ is SPD when $W_S=\widehat{S}^{-1}$. When $H_WH_W^T=0$, it is easy to know that inequalities in \cref{CN} still hold true. When $H_WH_W^T$ is SPD, denote by $W_1=H_W^T(H_WH_W^T)^{-\frac{1}{2}}$, then it is true that ${W^T_1}W_1=I$. Since $\Upsilon$ is SPS, we have, by \cref{poinca,ABA},
	\begin{equation*}
		\begin{aligned}
			\lambda_{\min}(H_W\Upsilon H_W^T)
			&=\lambda_{\min}\big((H_WH_W^T)^{\frac{1}{2}}{W^T_1}\Upsilon W_1(H_WH_W^T)^{\frac{1}{2}}\big)\\
			&\ge \lambda_{\min}({W^T_1}\Upsilon W_1)\lambda_{\min}(H_WH_W^T)\\
			&\ge \lambda_{\min}(\Upsilon)\lambda_{\min}(H_WH_W^T),
		\end{aligned}
	\end{equation*}
	and
	\begin{equation*}
		\begin{aligned}
			\lambda_{\max}(H_W\Upsilon H_W^T)
			&=\lambda_{\max}\big((H_WH_W^T)^{\frac{1}{2}}{W^T_1}\Upsilon W_1(H_WH_W^T)^{\frac{1}{2}}\big)\\
			&\le \lambda_{\max}({W^T_1}\Upsilon W_1)\lambda_{\max}(H_WH_W^T)\\
			&\le \lambda_{\max}(\Upsilon)\lambda_{\max}(H_WH_W^T),
		\end{aligned}
	\end{equation*}
	i.e., two inequalities in \cref{CN} hold true, too.
		
	Since $\Theta$ is a Hermitian matrix, by \cref{CN,weyl},  the inequality in \cref{F2} is true. 
	The proof is completed.	
\end{proof}

Denote by
\begin{equation}\label{r10}
g_1(s)=1+\frac{1}{2}s-\sqrt{\frac{1}{4}s^2+s},
\hspace{1em}
g_2(s)=1+\frac{1}{2}s+\sqrt{\frac{1}{4}s^2+s}
\end{equation}
the two positive functions defined on $[0, +\infty)$, respectively.

\begin{lemma}\label{le1}
	If
	\begin{equation*}
	L=	
	\begin{pmatrix}
	I_q&\widehat{B}^T\\0&I_p
	\end{pmatrix},
	\end{equation*}
where $\widehat{B}\in \mathbb{R}^{p\times q}$ is a real matrix,
    then for any eigenvalue $\lambda(LL^T)$ of $LL^T$ it holds that
    \begin{equation}\label{r1}
    g_1(\lambda_{\max}(\widehat{B}^T\widehat{B}))
    \le \lambda(LL^T) \le
    g_2(\lambda_{\max}(\widehat{B}^T\widehat{B})),
    \end{equation} 
where $g_1$ and $g_2$ are two positive functions defined by \cref{r10}.

	\begin{proof}
		Since $LL^T$ is SPD, all  eigenvalues of $LL^T$ are positive real numbers. 
		Denote by
		\begin{equation*}\label{r3}
			J=\begin{pmatrix}
				\widehat{B}^T\widehat{B} &\widehat{B}^T\\\widehat{B}&0
			\end{pmatrix},
		\end{equation*}
		we have
		\begin{equation*}
		LL^T=I_{p+q}+J,
		\end{equation*}
which implies for any nonzero eigenvalue $\lambda(LL^T)$ of $LL^T$ that there exists an eigenvalue $\lambda(J)$ of $J$ such that 
	\begin{equation}\label{r2}
\lambda(LL^T)=1+\lambda(J).
\end{equation}

Since $\lambda(LL^T)$ is a real number, $\lambda(J)$ is a real number, too.

If $\lambda(J)=0$, from \cref{r2}, we can know that
\begin{equation}\label{eq0}
\lambda(LL^T)=1=g_1(0)=g_2(0).
\end{equation}
Here $g_1$ and $g_2$ are two functions defined by \cref{r10}.

If $\lambda(J)\neq 0$, then for any corresponding eigenvector $(x^T, y^T)^T$ of $\lambda(J)$ with $x\in \mathbb{R}^{q}$ and $y\in \mathbb{R}^{p}$, we have
		\begin{equation}\label{r5}
		\begin{cases}
		\widehat{B}^T\widehat{B} x+\widehat{B}^T y=\lambda(J) x,\\
		\widehat{B}x=\lambda(J) y.
		\end{cases} 
		\end{equation}	
Eliminating $y$ in \cref{r5} implies
		\begin{equation}\label{r8}
		(\lambda(J)+1)\widehat{B}^T\widehat{B} x=\lambda^2(J) x.
		\end{equation}
	If $x$ in \cref{r8} is a zero vector, then by the second equality of \cref{r5}, $y$ is also a zero vector. It is in contradiction with that any eigenvector is a nonzero vector. The contradiction shows us that $x$ is not a zero vector. 
	Since $\widehat{B}^T\widehat{B}$ is SPS, we have from \cref{r8} that
		\begin{equation}\label{r9.0}
		\lambda(\widehat{B}^T\widehat{B})= \frac{\lambda^2(J)}{\lambda(J)+1},
		\hspace{1em}
		\lambda(J)>-1, 
		\hspace{1em}
		\lambda(J)\neq 0
		\end{equation}
	hold true for some suitable eigenvalue $\lambda(\widehat{B}^T\widehat{B})$ of $\widehat{B}^T\widehat{B}.$

		By \cref{r9.0}, we can obtain
		\begin{equation}\label{neq0}
		\lambda(J) = g_1(\lambda(\widehat{B}^T\widehat{B}))-1
		\hspace{1em}
		\text{or}
		\hspace{1em}
		\lambda(J) = g_2(\lambda(\widehat{B}^T\widehat{B}))-1
		\end{equation}
		with two functions $g_1$ and $g_2$ defined in \cref{r10}.

Since $g_1$ and $g_2$ are monotone decreasing and increasing functions defined on $[0, +\infty)$, respectively, it follows from  \cref{eq0,r2,neq0} that \cref{r1} holds true.
The proof is completed.		 	
	\end{proof}
\end{lemma}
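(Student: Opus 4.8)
The plan is to reduce the spectral study of $LL^T$ to that of the block matrix $J = \begin{pmatrix}\widehat{B}^T\widehat{B} & \widehat{B}^T\\ \widehat{B} & 0\end{pmatrix}$, since $LL^T = I_{p+q} + J$ is immediate from the definition of $L$. Thus every eigenvalue of $LL^T$ is of the form $1 + \lambda(J)$, and because $LL^T$ is SPD each $\lambda(J)$ is real and exceeds $-1$. First I would dispose of the trivial case $\lambda(J)=0$, which gives $\lambda(LL^T)=1=g_1(0)=g_2(0)$. For a nonzero eigenvalue $\lambda(J)$ with eigenvector $(x^T,y^T)^T$, writing out the two block equations $\widehat{B}^T\widehat{B}x + \widehat{B}^Ty = \lambda(J)x$ and $\widehat{B}x = \lambda(J)y$, I would substitute the second into the first to eliminate $y$, obtaining $(\lambda(J)+1)\widehat{B}^T\widehat{B}x = \lambda^2(J)x$. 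A short argument shows $x\neq 0$ (otherwise $y=0$ too, contradicting that eigenvectors are nonzero), so $x$ is an eigenvector of $\widehat{B}^T\widehat{B}$ with eigenvalue $\mu := \lambda^2(J)/(\lambda(J)+1)\ge 0$.

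Next I would invert this relation. Fixing an eigenvalue $\mu = \lambda(\widehat{B}^T\widehat{B})\ge 0$, the quadratic $\lambda^2 - \mu\lambda - \mu = 0$ in $\lambda(J)$ has the two roots $\tfrac{1}{2}\mu \pm \sqrt{\tfrac14\mu^2+\mu}$, hence $\lambda(J) + 1 \in \{g_1(\mu), g_2(\mu)\}$ with $g_1, g_2$ as in \cref{r10}. Therefore every eigenvalue of $LL^T$ equals $g_1(\mu)$ or $g_2(\mu)$ for some eigenvalue $\mu$ of $\widehat{B}^T\widehat{B}$ (including $\mu=0$, which is consistent with the case $\lambda(J)=0$ since $g_1(0)=g_2(0)=1$).

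Finally I would use monotonicity: $g_1$ is decreasing and $g_2$ is increasing on $[0,+\infty)$, and $g_1(s)\le 1 \le g_2(s)$ for all $s\ge 0$. Since every eigenvalue $\mu$ of $\widehat{B}^T\widehat{B}$ lies in $[0,\lambda_{\max}(\widehat{B}^T\widehat{B})]$, we get $g_1(\mu)\ge g_1(\lambda_{\max}(\widehat{B}^T\widehat{B}))$ and $g_2(\mu)\le g_2(\lambda_{\max}(\widehat{B}^T\widehat{B}))$, while $g_1(\mu)\le 1\le g_2(\lambda_{\max}(\widehat{B}^T\widehat{B}))$ and $g_1(\lambda_{\max}(\widehat{B}^T\widehat{B}))\le 1\le g_2(\mu)$; combining these yields the two-sided bound \cref{r1} for every eigenvalue of $LL^T$. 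I do not anticipate a serious obstacle here — the only point needing care is the case analysis ensuring the eigenvector component $x$ is nonzero and that the $\mu=0$ case matches up with both branches $g_1,g_2$ so that no eigenvalue escapes the stated interval.
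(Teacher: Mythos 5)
Your proposal is correct and follows essentially the same route as the paper's proof: the shift $LL^T=I_{p+q}+J$, elimination of $y$ in the eigenvector equations to get $\lambda(\widehat{B}^T\widehat{B})=\lambda^2(J)/(\lambda(J)+1)$, inversion of this relation via the quadratic to identify $\lambda(J)+1$ with $g_1$ or $g_2$ evaluated at an eigenvalue of $\widehat{B}^T\widehat{B}$, and the monotonicity of $g_1,g_2$ to pass to $\lambda_{\max}(\widehat{B}^T\widehat{B})$. No gaps; your explicit remark that $g_1(s)\le 1\le g_2(s)$ just makes the final monotonicity step slightly more detailed than the paper's.
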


\begin{lemma}\label{the1}
	(Bendixson Theorem \cite{stoer2002}) 
	Decomposing an arbitrary matrix $H$ into $H=H_1+\mi H_2$, where $H_1$ and $H_2$ are Hermitian, then for every eigenvalue $\lambda(H)$ of $H$ one has
	\begin{equation*}
		\begin{aligned}
		&\lambda_{\min}(H_1)\le Re(\lambda(H)) \le \lambda_{\max}(H_1),\\
		&\lambda_{\min}(H_2)\le Im(\lambda(H)) \le \lambda_{\max}(H_2).
		\end{aligned}
	\end{equation*}
\end{lemma}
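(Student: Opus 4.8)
The plan is to combine the Cartesian (Toeplitz) decomposition of $H$ with the Rayleigh-quotient bounds for Hermitian matrices. First I would note that the decomposition in the statement is essentially forced: from $H=H_1+\mi H_2$ with $H_1^*=H_1$ and $H_2^*=H_2$ one gets $H^*=H_1-\mi H_2$, so $H_1=\tfrac12(H+H^*)$ and $H_2=\tfrac1{2\mi}(H-H^*)$, both of which are Hermitian. This step serves only to fix notation; the argument that follows uses nothing but $H=H_1+\mi H_2$ together with the Hermiticity of $H_1$ and $H_2$.

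Next, let $\lambda$ be any eigenvalue of $H$ and let $x\in\mathbb{C}^{p}$ be a corresponding eigenvector scaled so that $x^*x=1$. Left-multiplying $Hx=\lambda x$ by $x^*$ gives $\lambda=x^*Hx=x^*H_1x+\mi\,x^*H_2x$. Because $H_1$ and $H_2$ are Hermitian, the scalars $x^*H_1x$ and $x^*H_2x$ are real, whence $Re(\lambda)=x^*H_1x$ and $Im(\lambda)=x^*H_2x$. Applying \cref{poinca} with $k=1$ and $W=x$ (so that $W^*W=I$ and $W^*TW=x^*Tx$ is the unique eigenvalue of the $1\times1$ matrix $x^*Tx$) first to $T=H_1$ and then to $T=H_2$ yields $\lambda_{\min}(H_1)\le x^*H_1x\le\lambda_{\max}(H_1)$ and $\lambda_{\min}(H_2)\le x^*H_2x\le\lambda_{\max}(H_2)$, which is exactly the assertion.

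There is no genuine obstacle here: the result is classical and the proof is only a few lines. The only points that require a little care are that one must work over $\mathbb{C}$ — the eigenvector $x$ is in general complex even when $H$ is a real matrix — and that the Hermiticity of $H_1$ and $H_2$ is precisely what makes $x^*H_1x$ and $x^*H_2x$ real, so that they are honestly the real and imaginary parts of $\lambda$ rather than merely some additive splitting. (Equivalently, one may phrase the whole argument as spectral containment of $\lambda$ in the numerical range $\{x^*Hx:x^*x=1\}$, which lies in the rectangle $[\lambda_{\min}(H_1),\lambda_{\max}(H_1)]\times[\lambda_{\min}(H_2),\lambda_{\max}(H_2)]$ of the complex plane.)
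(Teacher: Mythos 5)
Your proof is correct. The paper does not prove this lemma at all --- it is quoted as a classical result from \cite{stoer2002} and used as a black box --- so there is no internal argument to compare against; your Rayleigh-quotient argument (write $\lambda=x^*Hx=x^*H_1x+\mi\,x^*H_2x$ for a unit eigenvector $x$, use Hermiticity to identify $x^*H_1x$ and $x^*H_2x$ with $Re(\lambda)$ and $Im(\lambda)$, then bound them by the extreme eigenvalues, e.g.\ via \cref{poinca} with $k=1$) is the standard textbook proof and is complete, including the correct caveat that $x$ must be allowed to be complex even when $H$ is real.
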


\begin{theorem}\label{the2}
	(the generalized Bendixson Theorem)
	Let 
	\begin{equation*}\label{eq3}
	\widetilde{K}=
	\begin{pmatrix}
	\widetilde{A}&\widetilde{B}^T&\widetilde{E}^T\\-\widetilde{B}&\widetilde{D}&\widetilde{C}^T\\\widetilde{E}&-\widetilde{C}&\widetilde{F}
	\end{pmatrix}
	\end{equation*}
    be a real block matrix.
	Suppose that $\widetilde{A}$ is SPD, $\widetilde{D}$ is symmetric, and $\widetilde{F}-\widetilde{E}\widetilde{A}^{-1}\widetilde{E}^T$ is SPS. It holds that
	\begin{equation}\label{eq12}
	\!\!\!\!\!\!
	\begin{cases}
	\!Re(\lambda(\widetilde{K})) \!\ge\!
	\min\!\left\{ \!\lambda_{\min}(\widetilde{A}),
	\lambda_{\min}(\widetilde{F}\!\!-\!\widetilde{E}\widetilde{A}^{-1}\widetilde{E}^T), \!
	 \frac{\lambda_{\min}(\widetilde{D})}{g_1(\lambda_{\max}(\widetilde{E}\widetilde{A}^{-\!2}\widetilde{E}^T)\!)}\!\right\}\!g_1(\lambda_{\max}(\widetilde{E}\widetilde{A}^{-\!2}\widetilde{E}^T)\!) , \\
	\!Re(\lambda(\widetilde{K})) \!\le\! 
	\max\!\left\{ \!
	\lambda_{\max}(\widetilde{A}), \lambda_{\max}(\widetilde{F}\!\!-\!\widetilde{E}\widetilde{A}^{-\!1}\widetilde{E}^T),\!
	\frac{\lambda_{\max}(\widetilde{D})}{g_2(\lambda_{\max}(\widetilde{E}\widetilde{A}^{-\!2}\widetilde{E}^T)\!)}\!\right\}\!g_2(\lambda_{\max}(\widetilde{E}\widetilde{A}^{-\!2}\widetilde{E}^T)\!)
	\end{cases}
	\!\!\!\!\!\!\!\!\!\!\!\!\!\!
	\end{equation}  
	and
	\begin{equation}\label{eq014}
	|Im(\lambda(\widetilde{K}))| \le \sqrt{\lambda_{\max}(\widetilde{B}\widetilde{B}^T+\widetilde{C}^T\widetilde{C})}.
	\end{equation}
	Here $g_1$ and $g_2$ are two functions defined in \cref{r10}. 
\end{theorem}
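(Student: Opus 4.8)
The plan is to split $\widetilde{K}$ into its Hermitian and skew-Hermitian parts and apply the classical Bendixson Theorem (\cref{the1}). Writing $\widetilde{K}=\widetilde{K}_1+\widetilde{K}_2$ with
\[
\widetilde{K}_1=\begin{pmatrix}\widetilde{A}&0&\widetilde{E}^T\\0&\widetilde{D}&0\\\widetilde{E}&0&\widetilde{F}\end{pmatrix},
\qquad
\widetilde{K}_2=\begin{pmatrix}0&\widetilde{B}^T&0\\-\widetilde{B}&0&\widetilde{C}^T\\0&-\widetilde{C}&0\end{pmatrix},
\]
we have that $\widetilde{K}_1$ is symmetric (hence Hermitian) and $\widetilde{K}_2$ is skew-symmetric, so $\mi^{-1}\widetilde{K}_2$ is Hermitian. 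By \cref{the1}, $Re(\lambda(\widetilde{K}))$ lies between $\lambda_{\min}(\widetilde{K}_1)$ and $\lambda_{\max}(\widetilde{K}_1)$, and $|Im(\lambda(\widetilde{K}))|\le \lambda_{\max}(\mi^{-1}\widetilde{K}_2)=\rho(\widetilde{K}_2)$ since the nonzero eigenvalues of the real skew-symmetric $\widetilde{K}_2$ are purely imaginary and come in conjugate pairs $\pm\mi\mu$.

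For the imaginary-part bound \cref{eq014}, I would compute $\widetilde{K}_2\widetilde{K}_2^T=-\widetilde{K}_2^2=\diag(\widetilde{B}^T\widetilde{B},\ \widetilde{B}\widetilde{B}^T+\widetilde{C}^T\widetilde{C},\ \widetilde{C}\widetilde{C}^T)$, so $\rho(\widetilde{K}_2)^2=\lambda_{\max}(\widetilde{K}_2\widetilde{K}_2^T)$ equals the maximum of $\lambda_{\max}(\widetilde{B}^T\widetilde{B})$, $\lambda_{\max}(\widetilde{B}\widetilde{B}^T+\widetilde{C}^T\widetilde{C})$, and $\lambda_{\max}(\widetilde{C}\widetilde{C}^T)$. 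Since $\widetilde{B}^T\widetilde{B}$ and $\widetilde{B}\widetilde{B}^T$ share the same nonzero eigenvalues, and likewise for $\widetilde{C}^T\widetilde{C}$ and $\widetilde{C}\widetilde{C}^T$, the middle block dominates: $\lambda_{\max}(\widetilde{B}\widetilde{B}^T+\widetilde{C}^T\widetilde{C})\ge\max\{\lambda_{\max}(\widetilde{B}\widetilde{B}^T),\lambda_{\max}(\widetilde{C}^T\widetilde{C})\}$ because both summands are SPS. This gives \cref{eq014}.

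For the real-part bounds \cref{eq12}, the work is to estimate the extreme eigenvalues of $\widetilde{K}_1$. The idea is to peel off the middle block $\widetilde{D}$, which decouples, and then handle the remaining $2\times2$ block $R=\begin{pmatrix}\widetilde{A}&\widetilde{E}^T\\\widetilde{E}&\widetilde{F}\end{pmatrix}$. Using the congruence induced by $\begin{pmatrix}I&0\\-\widetilde{E}\widetilde{A}^{-1}&I\end{pmatrix}$, $R$ is congruent to $\diag(\widetilde{A},\ \widetilde{F}-\widetilde{E}\widetilde{A}^{-1}\widetilde{E}^T)$, but congruence only preserves inertia, not eigenvalues, so I instead use the similarity transformation by $V=\begin{pmatrix}\widetilde{A}^{1/2}&0\\0&I\end{pmatrix}$ to reduce to analyzing $\begin{pmatrix}I&\widehat{E}^T\\\widehat{E}&\widetilde{F}\end{pmatrix}$ with $\widehat{E}=\widetilde{E}\widetilde{A}^{-1/2}$, and further conjugate by the block-triangular factor so that the spectral content of the cross term is captured by $LL^T$ with $L=\begin{pmatrix}I&\widehat{E}^T\\0&I\end{pmatrix}$; \cref{le1} then supplies the factors $g_1(\lambda_{\max}(\widehat{E}\widehat{E}^T))=g_1(\lambda_{\max}(\widetilde{E}\widetilde{A}^{-2}\widetilde{E}^T))$ and $g_2$ of the same argument. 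Combining via \cref{le3}-type products of the SPS factors $\diag(\widetilde{A},\widetilde{F}-\widetilde{E}\widetilde{A}^{-1}\widetilde{E}^T)$ and $LL^T$, together with the separate contribution of $\widetilde{D}$ scaled appropriately so that all three terms are multiplied by the same $g_1$ (resp.\ $g_2$) factor, yields the stated $\min$ (resp.\ $\max$) over three quantities. The main obstacle is making the $2\times2$ reduction rigorous: $R$ is not simply block-diagonalizable by similarity, so one must carefully track how $LL^T$ interacts with $\diag(\widetilde{A},\widetilde{F}-\widetilde{E}\widetilde{A}^{-1}\widetilde{E}^T)$ through Lemmas~\ref{ABA} and \ref{le3}, and verify that the division-and-multiplication by $g_1,g_2$ in the $\widetilde{D}$ term is legitimate (i.e.\ that the bound is not vacuous when $\widetilde{D}$ is indefinite or when $\widetilde{E}=0$ so $g_1=g_2=1$).
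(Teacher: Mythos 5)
Your real-part argument contains the genuine gap. The reduction you propose for $R=\begin{pmatrix}\widetilde{A}&\widetilde{E}^T\\\widetilde{E}&\widetilde{F}\end{pmatrix}$ does not work as stated: conjugating by $V=\diag(\widetilde{A}^{1/2},I)$ gives $\begin{pmatrix}\widetilde{A}&\widetilde{A}^{-1/2}\widetilde{E}^T\\\widetilde{E}\widetilde{A}^{1/2}&\widetilde{F}\end{pmatrix}$, not $\begin{pmatrix}I&\widehat{E}^T\\\widehat{E}&\widetilde{F}\end{pmatrix}$ (the latter is again only a congruence), and with $\widehat{E}=\widetilde{E}\widetilde{A}^{-1/2}$ one has $\widehat{E}\widehat{E}^T=\widetilde{E}\widetilde{A}^{-1}\widetilde{E}^T$, whose largest eigenvalue is in general \emph{not} equal to $\lambda_{\max}(\widetilde{E}\widetilde{A}^{-2}\widetilde{E}^T)$, so the claimed arguments of $g_1,g_2$ do not come out of your construction. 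Moreover you explicitly leave ``making the $2\times 2$ reduction rigorous'' as an open obstacle, and that reduction is exactly the content of the proof. The missing idea is to keep the Schur-complement congruence but turn it into a similarity with a product of SPS matrices: write $R=\widetilde{L}^T\diag\bigl(\widetilde{A},\,\widetilde{F}-\widetilde{E}\widetilde{A}^{-1}\widetilde{E}^T\bigr)\widetilde{L}$ with $\widetilde{L}=\begin{pmatrix}I&\widetilde{A}^{-1}\widetilde{E}^T\\0&I\end{pmatrix}$, observe that $R\sim\diag\bigl(\widetilde{A},\,\widetilde{F}-\widetilde{E}\widetilde{A}^{-1}\widetilde{E}^T\bigr)\,(\widetilde{L}\widetilde{L}^T)$, and bound the eigenvalues of this product of SPS matrices by \cref{le3}, while \cref{le1} pins $\lambda(\widetilde{L}\widetilde{L}^T)$ between $g_1$ and $g_2$ evaluated at $\lambda_{\max}(\widetilde{A}^{-1}\widetilde{E}^T\widetilde{E}\widetilde{A}^{-1})=\lambda_{\max}(\widetilde{E}\widetilde{A}^{-2}\widetilde{E}^T)$ (same nonzero spectrum). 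Combining with the decoupled $\widetilde{D}$ block from Bendixson, the rescaling $\min\{a\,g_1,d\}=g_1\min\{a,d/g_1\}$ (legitimate since $g_1,g_2>0$, whatever the sign of $\lambda_{\min}(\widetilde{D})$) yields \cref{eq12}. Without this step your outline does not produce the stated bounds.

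On the imaginary part, your splitting and use of \cref{the1} matches the paper, but the computation $\widetilde{K}_2\widetilde{K}_2^T=\diag\bigl(\widetilde{B}^T\widetilde{B},\ \widetilde{B}\widetilde{B}^T+\widetilde{C}^T\widetilde{C},\ \widetilde{C}\widetilde{C}^T\bigr)$ is wrong: the true product has off-diagonal blocks $-\widetilde{B}^T\widetilde{C}^T$ and $-\widetilde{C}\widetilde{B}$ coupling the first and third block rows. The conclusion nevertheless holds, since permuting the blocks to the order $(1,3,2)$ puts $\widetilde{K}_2$ in the form $\begin{pmatrix}0&N\\-N^T&0\end{pmatrix}$ with $N^TN=\widetilde{B}\widetilde{B}^T+\widetilde{C}^T\widetilde{C}$, so $\rho(\widetilde{K}_2)^2=\lambda_{\max}(N^TN)$; the paper reaches the same bound by eliminating $x$ and $z$ from the eigenvector equations of $\widetilde{N}=\frac{1}{2}(\widetilde{K}-\widetilde{K}^T)$. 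So \cref{eq014} is recoverable after repairing this slip, but the real-part bound \cref{eq12} needs the missing reduction described above.
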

\begin{proof} 
By the assumption, it is easy to know that $\widetilde{F}$ is symmetric and that all eigenvalues of 
$\begin{pmatrix}
\widetilde{A} & \widetilde{E}^T \\
\widetilde{E}& \widetilde{F} 
\end{pmatrix}$ are real numbers. So does $\widetilde{D}$. Since
	\begin{equation*}
	\begin{pmatrix}
	\widetilde{A} & 0&\widetilde{E}^T \\
	0 & \widetilde{D} &0\\
	\widetilde{E}&0&\widetilde{F}
	\end{pmatrix}
	\sim 
	\begin{pmatrix}
	\widetilde{A} & \widetilde{E}^T&0 \\
	\widetilde{E}& \widetilde{F} &0\\
	0&0&\widetilde{D}
	\end{pmatrix},
	\end{equation*}
	a straightforward application of \cref{the1} (Bendixson Theorem ) on $\widetilde{K}$ deduces that the real part of any eigenvalue $\lambda(\widetilde{K})$ of $\widetilde{K}$ satisfies
	\begin{equation}\label{eq12.2}
	Re(\lambda(\widetilde{K}))\ge \lambda_{\min}
	\begin{pmatrix}
	\widetilde{A} & 0&\widetilde{E}^T \\
	0 & \widetilde{D} &0\\
	\widetilde{E}&0&\widetilde{F}
	\end{pmatrix}
	= \min\left\{\lambda_{\min}
	\begin{pmatrix}
	\widetilde{A} & \widetilde{E}^T \\
	\widetilde{E}&\widetilde{F}
	\end{pmatrix}
	, \lambda_{\min}(\widetilde{D})\right\},
	\end{equation}
	and
	\begin{equation}\label{eq13.2}
	\begin{aligned}
	Re(\lambda(\widetilde{K})) \le \lambda_{\max}
	\begin{pmatrix}
	\widetilde{A} & 0&\widetilde{E}^T \\
	0 & \widetilde{D} &0\\
	\widetilde{E}&0&\widetilde{F}
	\end{pmatrix}
	= 
	\max\left\{\lambda_{\max}
	\begin{pmatrix}
	\widetilde{A} & \widetilde{E}^T \\
	\widetilde{E}&\widetilde{F}
	\end{pmatrix}, \lambda_{\max}(\widetilde{D})\right\}.
	\end{aligned}
	\end{equation}
	Since
	\begin{equation*}
	\begin{pmatrix}
	\widetilde{A} & \widetilde{E}^T \\
	\widetilde{E}&\widetilde{F}
	\end{pmatrix}
	=\widetilde{L}^T
	\begin{pmatrix}
	\widetilde{A} & 0 \\
	0&\widetilde{F}-\widetilde{E}\widetilde{A}^{-1}\widetilde{E}^T
	\end{pmatrix}\widetilde{L}
\sim\begin{pmatrix}
	\widetilde{A} & 0 \\
	0&\widetilde{F}-\widetilde{E}\widetilde{A}^{-1}\widetilde{E}^T
\end{pmatrix}(\widetilde{L}\widetilde{L}^T)
	\end{equation*}
with 
\begin{equation*}\label{eq3.1}
\widetilde{L}=
\begin{pmatrix}
I&\widetilde{A}^{-1}\widetilde{E}^T\\0&I
\end{pmatrix},
\end{equation*} 
	 we can know, according to \cref{le3}, that the inequality
	\begin{equation}\label{eq12.3}
	 \begin{aligned}
	\lambda
	\begin{pmatrix}
	\widetilde{A} & \widetilde{E}^T \\
	\widetilde{E}&\widetilde{F}
	\end{pmatrix}
	&=\lambda\left[
	\begin{pmatrix}
	\widetilde{A} & 0 \\
	0&\widetilde{F}-\widetilde{E}\widetilde{A}^{-1}\widetilde{E}^T
	\end{pmatrix}(\widetilde{L}\widetilde{L}^T)\right]\\
	&\ge
	\lambda_{\min}
	\begin{pmatrix}
	\widetilde{A} & 0 \\
	0&\widetilde{F}-\widetilde{E}\widetilde{A}^{-1}\widetilde{E}^T
	\end{pmatrix}
	\lambda_{\min}(\widetilde{L}\widetilde{L}^T)\\
	&=
	\min\left\{
	\lambda_{\min}(\widetilde{A}),  \lambda_{\min}(\widetilde{F}-\widetilde{E}\widetilde{A}^{-1}\widetilde{E}^T)\right\} \lambda_{\min}(\widetilde{L}\widetilde{L}^T)
	\end{aligned}
	\end{equation}
	holds true based on that $\widetilde{A}$ and $\widetilde{L}\widetilde{L}^T$ are SPD, and that $\widetilde{F}-\widetilde{E}\widetilde{A}^{-1}\widetilde{E}^T$ is SPS. Subsequently, we have
	\begin{equation}\label{eq13.3}
	\lambda
	\begin{pmatrix}
	\widetilde{A} & \widetilde{E}^T \\
	\widetilde{E}&\widetilde{F}
	\end{pmatrix}
	\le
	\max\left\{\lambda_{\max}(\widetilde{A}), \lambda_{\max}(\widetilde{F}-\widetilde{E}\widetilde{A}^{-1}\widetilde{E}^T)
	\right\}
	\lambda_{\max}(\widetilde{L}\widetilde{L}^T).
	\end{equation}
	Thanks to \cref{le1},  we can obtain the first inequality of \cref{eq12} from \cref{eq12.2,eq12.3}, and the second one of \cref{eq12} from \cref{eq13.2,eq13.3}, respectively.

	Let
	\begin{equation*}
	\widetilde{N}=\frac{1}{2}(\widetilde{K}-\widetilde{K}^T)=\begin{pmatrix}
	0 & \widetilde{B}^T&0 \\
	-\widetilde{B} & 0 &\widetilde{C}^T\\
	0&-\widetilde{C}&0
	\end{pmatrix}.
	\end{equation*}
	Then, for any nonzero eigenvalue $\lambda(\widetilde{N})$ of the matrix $\widetilde{N}$ with a corresponding eigenvector $(x^T, y^T, z^T)^T$ that
	guarantees the feasible multiplication of the  following block matrices:
			\begin{equation*}
		\begin{pmatrix}
			0 & \widetilde{B}^T&0 \\
			-\widetilde{B} & 0 &\widetilde{C}^T\\
			0&-\widetilde{C}&0 
		\end{pmatrix}
		\begin{pmatrix}
			x\\y\\z
		\end{pmatrix}
		=\lambda(\widetilde{N})
		\begin{pmatrix}
			x\\y\\z
		\end{pmatrix}.
	\end{equation*}
	 we have
	\begin{equation}\label{nxyz}
	\begin{cases}
	\widetilde{B}^Ty=\lambda(\widetilde{N}) x,\\
	-\widetilde{B}x+\widetilde{C}^Tz=\lambda(\widetilde{N}) y,\\
	-\widetilde{C}y=\lambda(\widetilde{N})z.
	\end{cases} 
	\end{equation}
		If $y$ in \cref{nxyz} is a zero vector, then both $x$ and $z$ are all zero vectors
		by the first and third equalities of \cref{nxyz}. This means that this eigenvector is a zero vector. It is absolutely impossible, so $y$ is a nonzero vector.
		
Eliminating $x$ and $z$ in \cref{nxyz} deduces
\begin{equation*}
-(\widetilde{B}\widetilde{B}^T+\widetilde{C}^T\widetilde{C})y=\lambda^2(\widetilde{N}) y,
\end{equation*}	
which implies that there exists $\lambda(\widetilde{B}\widetilde{B}^T+\widetilde{C}^T\widetilde{C})$, an eigenvalue of $\widetilde{B}\widetilde{B}^T+\widetilde{C}^T\widetilde{C}$, satisfying 	
\begin{equation}\label{ima1}
(\mi\lambda(\widetilde{N}))^2 = \lambda(\widetilde{B}\widetilde{B}^T+\widetilde{C}^T\widetilde{C}).
\end{equation}
By \cref{the1} (Bendixson Theorem) and \cref{ima1}, we can know
\begin{equation*}
|Im(\lambda(\widetilde{K}))|\le |\lambda_{\max}(\widetilde{N})|\le \sqrt{\lambda_{\max}(\widetilde{B}\widetilde{B}^T+\widetilde{C}^T\widetilde{C})},
\end{equation*}
 i.e., \cref{eq014} is true. The proof is completed.
\end{proof}

\section{Eigenvalue analysis of the preconditioned matrix $M^{-1}K$}
\label{sec:eig}
In this section, the bounds of real and imaginary parts of the eigenvalues of the preconditioned matrix $M^{-1}K$ are obtained.

For $M_A$, $S$, $\widehat{S}$, $\widehat{M}_S$, $W_S$, $Y_A$ and $Z_A$ defined in \cref{sec:pre}, we have eigenvalues of $M_A^{-1}A$, $\widehat{S}^{-1}S$, $W_SS$,  $\widehat{M}_S^{-1}\!D$, $\widehat{M}_S^{-1}C\widehat{S}^{-1}C^T$, $\widehat{M}_S^{-1}CW_SC^T$, $\widehat{M}_S^{-1}(D+C\widehat{S}^{-1}C^T)$, $\widehat{M}_S^{-1}(D+CW_SC^T)$ and $Y_AA+Z_AA-Y_AAZ_AA$ are all real numbers. Denote by
\begin{equation}\label{eq15}
\!\!	\begin{array}{c}
	\begin{array}{lll}
		\underline{\mu}=\lambda_{\min}\big(M_A^{-1}A\big),&
		\underline{\nu}=\lambda_{\min}\big(\widehat{S}^{-1}S\big),
	    &	
	    \underline{\omega}=\lambda_{\min}\big(\widehat{M}_S^{-1}C\widehat{S}^{-1}C^T\big),\\
	   	\bar{\mu}=\lambda_{\max}\big(M_A^{-1}A\big),&
	   	\bar{\nu}=\lambda_{\max}\big(\widehat{S}^{-1}S\big),&
	   	\bar{\omega}=\lambda_{\max}\big(\widehat{M}_S^{-1}C\widehat{S}^{-1}C^T\big),\\
	   	\underline{\tau}=\lambda_{\min}\big(\widehat{M}_S^{-1}D\big),&	\underline{\varphi}=\lambda_{\min}\big(W_SS\big),
	   &
	   	\underline{\phi}=\lambda_{\min}\big(\widehat{M}_S^{-1}CW_SC^T\big),\\
	   	\bar{\tau}=\lambda_{\max}\big(\widehat{M}_S^{-1}D\big),&
	   	\bar{\varphi}=\lambda_{\max}\big(W_SS\big),&
	   	\bar{\phi}=\lambda_{\max}\big(\widehat{M}_S^{-1}CW_SC^T\big),
	\end{array}\\
    \begin{array}{ll}
   \underline{\vartheta}=\lambda_{\min}\big(\widehat{M}_S^{-1}(D+C\widehat{S}^{-1}C^T)\big), &
   \bar{\vartheta}=\lambda_{\max}\big(\widehat{M}_S^{-1}(D+C\widehat{S}^{-1}C^T)\big),\\	
   \underline{\kappa}=\lambda_{\min}\big(\widehat{M}_S^{-1}(D+CW_SC^T)\big),&
   \bar{\kappa}=\lambda_{\max}\big(\widehat{M}_S^{-1}(D+CW_SC^T)\big),\\
   \underline{\gamma}=\lambda_{\min}(Y_AA+Z_AA-Y_AAZ_AA),&
   \bar{\gamma}=\lambda_{\max}(Y_AA+Z_AA-Y_AAZ_AA).
   \end{array} 		
   \end{array}
\end{equation}
Clearly, $\underline{\mu}$, $\bar{\mu}$, $\underline{\nu}$, $\bar{\nu}$, $\underline{\vartheta}$, $\bar{\vartheta}$ are positive numbers and $\underline{\varphi}$, $\bar{\varphi}$ $\underline{\tau}$, $\bar{\tau}$, $\underline{\omega}$, $\bar{\omega}$, $\underline{\phi}$, $\bar{\phi}$, $\underline{\kappa}$, $\bar{\kappa}$, $\underline{\gamma}$, $\bar{\gamma}$ non-negative ones\footnotemark[1]\footnotetext[1]{ Even though $\underline{\varphi}=\bar{\varphi}=\underline{\phi}=\bar{\phi}=0$, $\underline{\kappa}=\underline{\tau}$, $\bar{\kappa}=\bar{\tau}$ 
	if $W_S=0$, and $\underline{\varphi}=\underline{\nu}$, $\bar{\varphi}=\bar{\nu}$, $\underline{\phi}=\underline{\omega}$, $\bar{\phi}=\bar{\omega}$, $\underline{\kappa}=\underline{\vartheta}$, $\bar{\kappa}=\bar{\vartheta}$ if $W_S=\widehat{S}^{-1}$, we use $\underline{\varphi}$, $\bar{\varphi}$, $\underline{\phi}$, $\bar{\phi}$, $\underline{\kappa}$ and $\bar{\kappa}$ only for abbreviation.}.

Let
\begin{equation}\label{eq19}
	\varrho(s, t)=\max\left\{(s-1)^2, (1-t)^2\right\},
	\hspace{1em}
	s, t\ge 0,
\end{equation}
and 
\begin{equation}\label{hh}
	\underline{h}_{W}(t)=\begin{cases}
		\underline{\kappa}, & \text{if} \ 0\le t \le 1,\\
		\underline{\tau}+\underline{\phi}, & \text{if} \ 1< t \le 2,\\
	\end{cases}
\hspace{1em}
	\bar{h}_{W}(t)=\begin{cases}
	\bar{\kappa}, & \text{if} \ 0\le t \le 1,\\
	\bar{\tau}+\bar{\phi}, & \text{if} \ 1< t \le 2.
	\end{cases}
	\end{equation}

\begin{theorem}\label{thm3}
	Let $K$ be the coefficient matrix defined in \cref{eq1.0} and $M$ be the preconditioner in \cref{M} with SPD matrices $M_A$, $\widehat{S}$ and $\widehat{M}_S$.
	Assume $\underline{\mu}$, $\bar{\mu}$, $\underline{\nu}$, $\bar{\nu}$,  $\underline{\varphi}$, $\bar{\varphi}$, $\underline{\omega}$, $\bar{\omega}$, $\underline{\phi}$, $\bar{\phi}$, $\underline{\gamma}$ and $\bar{\gamma}$ 
	are defined by \cref{eq15} with $0<\bar{\mu}\le 2$, $0<\bar{\nu}\le 2$ and $0<\bar{\mu}\bar{\nu}<2$. 
	 Then any eigenvalue $\lambda$ of $M^{-1}K$ satisfies 
	 \begin{equation}\label{treq}
	 	\underline{\eta} \le Re(\lambda) \le \bar{\eta}, 
	 	\hspace{1em} 
	 	|Im(\lambda)| \le \rho,
	 \end{equation}
 where $\underline{\eta}$, $\bar{\eta}$ and $\rho$ are described in \cref{tab0}.
 	\begin{table}[tbhp]\label{tab0}
 		\renewcommand\arraystretch{1.4}
 	\caption{Definitions of $\underline{\eta}$, $\bar{\eta}$, and $\rho$ in \cref{thm3}}
 	\centering
 	\small
 		\begin{tabular}{r|c|r|l}
 			\hline
 			\multirow{12}*{Cases}&\multirow{3}*{$\bar{\gamma}\le 1$}&$\underline{\eta}$&$\min\big\{
 			\underline{\mu}, \underline{h}_{W}(\bar{\varphi})\!+\!\underline{\phi}(1\!-\!\bar{\varphi}), \frac{\underline{\gamma}\underline{\nu}}{g_1(\widehat{\xi})}\big\}g_1(\widehat{\xi})$  \\[1.5mm]
 			\cline{3-4}
 			&&$\bar{\eta}$&$\max\big\{ 
 			\bar{\mu}, \bar{h}_{W}(\bar{\varphi})\!+\!\bar{\phi}(1\!-\!\underline{\varphi}), \frac{\bar{\gamma}\bar{\nu}}{g_2(\widehat{\xi})}\big\}g_2(\widehat{\xi})$\\[1.5mm]
 			\cline{3-4}
 			&&$\rho^2$&$\bar{\omega}\!-\!\bar{\phi}\!+\!\bar{\phi}\varrho(\bar{\gamma}\bar{\varphi},\underline{\gamma}\underline{\varphi})
 			\!+\!\bar{\nu}
 			\lambda_{\max}\left((I\!-\!Y_AA)(I\!-\!Z_AA)M^{-1}_AA\right)$\\[1.5mm]
 			\cline{2-4}
 			&\multirow{3}*{$\underline{\gamma}\ge 1$}&$\underline{\eta}$&$\min\{ \underline{\mu}g_1(\xi_1),\underline{\nu}g_1(\xi_1), \underline{h}_{W}(\bar{\mu} \bar{\varphi})\!+\!\underline{\phi}(1\!-\!\bar{\mu} \bar{\varphi})\}$\\[1.5mm]
 			\cline{3-4}
 			&&$\bar{\eta}$&$\max\left\{\bar{\mu}g_2(\xi_1),\bar{\nu}g_2(\xi_1),  \bar{h}_{W}(\bar{\mu} \bar{\varphi})\!+\!\bar{\phi}(1\!-\!\underline{\mu} \underline{\varphi})\right\}$\\[1.5mm]
 			\cline{3-4}
 			&&$\rho^2$&$\bar{\omega}\!-\!\bar{\phi}\!+\!
 			\bar{\phi}\left((\bar{\mu}\!-\!1)\bar{\mu}\bar{\varphi}\!+\!\varrho(\bar{\mu}\bar{\varphi}, \underline{\mu}\underline{\varphi})\right)$\\[1.5mm]
 			\cline{2-4}
 			&\multirow{3}*{$\underline{\gamma}< 1<\bar{\gamma}$} &$\underline{\eta}$&$\min\big\{
 			\underline{\mu}, \underline{h}_{W}(\bar{\mu}\bar{\varphi})\!+\!\underline{\phi}(1\!-\!\bar{\mu}\bar{\varphi}), \frac{g_1(\xi_1)}{g_1(\widehat{\xi})}, 
 			\frac{\underline{\mu}\underline{\nu}g_1(\xi_1)}{g_1(\widehat{\xi})}\big\}g_1(\widehat{\xi})$\\[1.5mm]
 			\cline{3-4}
 			&&$\bar{\eta}$&$\max\big\{
 			1,\bar{h}_{W}(\bar{\mu}\bar{\varphi})\!+\!\bar{\phi}(1\!-\!\underline{\varphi}), \frac{\bar{\mu}g_2(\xi_1)}{g_2(\widehat{\xi})},\frac{\bar{\nu}g_2(\xi_1)}{g_2(\widehat{\xi})}\big\} g_2(\widehat{\xi})$\\[1.5mm]
 			\cline{3-4}
 			&&$\rho^2$&$ \bar{\omega}\!-\!\bar{\phi}\!+\!\bar{\phi}\left(
 			(\bar{\mu}\!-\!1)\bar{\mu}\bar{\varphi}
 			\!+\! \varrho(\bar{\mu}\bar{\varphi}, \underline{\mu}\underline{\varphi})\right)
 			\!+\!\bar{\nu}\lambda_{\max}\left(M^{-1}_AA(I\!-\!M^{-1}_AA)\right)$\\
 			\hline
 		\end{tabular}\!
 \end{table}	
\noindent In the table, the functions $\varrho$, $\underline{h}_{W}$ and $\bar{h}_{W}$ are defined by \cref{eq19,hh}, respectively, and 
	\begin{equation}\label{xi1234}
	\widehat{\xi}= \frac{\bar{\phi}\bar{\varphi}(1-\underline{\gamma})}{\underline{\mu}},
	\hspace{1em}
	\xi_1=\bar{\nu}-\frac{\bar{\nu}}{\bar{\mu}}.
\end{equation}
\end{theorem}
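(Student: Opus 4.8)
The plan is to combine \cref{theMK} with the classical Bendixson theorem \cref{the1} and a three-way case split governed by the sign of $I-\Gamma$.

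First I would invoke \cref{theMK}(ii): since $M^{-1}K$ is equivalent in eigenvalues to the matrix $K_P$ in \cref{eq10.0}, it suffices to bound $Re(\lambda(K_P))$ and $|Im(\lambda(K_P))|$. Using $\Delta_+\Delta_-=0$, $\Delta_+^2-\Delta_-^2=I-\Gamma$ from \cref{del}, and that $\Lambda$, $\Gamma$, $\Delta_\pm$ are diagonal, I would write out the symmetric and skew-symmetric parts of $K_P$ explicitly. The decisive feature is that in the symmetric part the $(1,2)$ block carries only $\Delta_-$ (it equals $\Delta_-\Lambda^{1/2}G^T$), the $(1,3)$ block carries only $\Delta_+$ (it equals $\Delta_+\Lambda^{1/2}G_W^TH_W^T$), the $(2,3)$ block vanishes, and the diagonal blocks are $\Lambda$, $G\Gamma G^T$, $\widehat D+H_WF_WH_W^T$. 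Then \cref{the1} gives $Re(\lambda)$ between the extreme eigenvalues of the symmetric part, and $|Im(\lambda)|$ controlled by the skew part.

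Next I would split according to the sign of $I-\Gamma$. Since $\Gamma$ is diagonal (\cref{theMK}(i)) with diagonal entries equal to the eigenvalues of $Y_AA+Z_AA-Y_AAZ_AA$, those entries lie in $[\underline\gamma,\bar\gamma]$; hence $\Delta_-=0$ exactly when $\bar\gamma\le1$ and $\Delta_+=0$ exactly when $\underline\gamma\ge1$. This yields the three cases of \cref{tab0}: (a) $\bar\gamma\le1$, where the symmetric part decouples into the two-by-two block $\bigl(\begin{smallmatrix}\Lambda&\Delta_+\Lambda^{1/2}G_W^TH_W^T\\ \ast&\widehat D+H_WF_WH_W^T\end{smallmatrix}\bigr)$ together with the block $G\Gamma G^T$; (b) $\underline\gamma\ge1$, where it decouples into $\bigl(\begin{smallmatrix}\Lambda&\Delta_-\Lambda^{1/2}G^T\\ \ast&G\Gamma G^T\end{smallmatrix}\bigr)$ together with $\widehat D+H_WF_WH_W^T$; (c) $\underline\gamma<1<\bar\gamma$, where neither decoupling occurs. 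In cases (a) and (b) I would bound each decoupled two-by-two symmetric block $\bigl(\begin{smallmatrix}P&Q^T\\Q&R\end{smallmatrix}\bigr)$ ($P$ SPD) by the congruence-plus-similarity $\bigl(\begin{smallmatrix}P&Q^T\\Q&R\end{smallmatrix}\bigr)\sim\bigl(\begin{smallmatrix}P&0\\0&R-QP^{-1}Q^T\end{smallmatrix}\bigr)(LL^T)$ employed inside the proof of \cref{the2}, combined with \cref{le3} and \cref{le1}, which produces bounds of the form $g_1(\cdot)\min\{\cdot\}$ and $g_2(\cdot)\max\{\cdot\}$ with $g_1,g_2$ from \cref{r10}. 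The essential simplification is algebraic: since $F_W=2I-G_W\Gamma G_W^T$ and $\Delta_+^2=I-\Gamma$ in case (a), the relevant Schur complement collapses to $\widehat D+H_W(2I-G_WG_W^T)H_W^T$, which is SPS precisely because $\bar\varphi=\lambda_{\max}(W_SS)\le2$ (a consequence of $\bar\nu\le2$ when $W_S=\widehat S^{-1}$, trivial when $W_S=0$), and whose spectrum is then estimated via \cref{lemma0}; analogously the Schur complement in case (b) is $GG^T$, and the block $G\Gamma G^T$ contributes the values $\underline\gamma\underline\nu$ and $\bar\gamma\bar\nu$. For the imaginary part I would reuse the eigenvector-elimination argument from the proof of \cref{the2}: for a nonzero eigenvalue of the skew part the middle component of any eigenvector cannot vanish, and eliminating the first and third components gives a quadratic relation whose right-hand side is an SPS combination of the skew off-diagonal blocks; bounding it yields $\rho^2$, with the $\varrho$-terms coming from norms of blocks such as $G\Gamma G_W^T$ and the dependence on $W_S$ producing the piecewise functions $\underline h_W$, $\bar h_W$. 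Finally I would convert every matrix quantity into the scalar parameters of \cref{eq15} using $\lambda_{\max}(GG^T)=\bar\nu$, $\lambda_{\max}(G_WG_W^T)=\bar\varphi$, $\lambda_{\max}(HH^T)=\bar\omega$, $\lambda_{\max}(H_WH_W^T)=\bar\phi$, $\{\lambda_{\min}(\Lambda),\lambda_{\max}(\Lambda)\}=\{\underline\mu,\bar\mu\}$, the commutativity of $\Lambda$ and $\Gamma$, and the similarity $(I-Y_AA)(I-Z_AA)M_A^{-1}A\sim\Lambda^{1/2}(I-\Gamma)\Lambda^{1/2}$; at that point $g_1,g_2$ end up evaluated at $\widehat\xi$ in case (a) and at $\xi_1$ in case (b), matching \cref{xi1234}, and the rows of \cref{tab0} emerge.

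The main obstacle is the indefinite case (c), together with the bookkeeping. In case (c) the symmetric part does not decouple, so instead of peeling off $G\Gamma G^T$ I would view a block-permuted, sign-adjusted copy of $K_P$ as a genuine three-by-three instance of $\widetilde K$ in \cref{the2} and apply the bound \cref{eq12}; this forces me to check its hypotheses ($\widetilde A=\Lambda$ SPD, $\widetilde D=G\Gamma G^T$ symmetric, $\widetilde F-\widetilde E\widetilde A^{-1}\widetilde E^T$ SPS), where the last again reduces, via the identities for $F_W$ and $\Delta_\pm$, to $\widehat D$ plus a manifestly SPS term under $\bar\mu\le2$ and $\bar\mu\bar\nu<2$. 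The extra summand $\bar\nu\lambda_{\max}(M_A^{-1}A(I-M_A^{-1}A))$ in $\rho^2$ for case (c) comes from $\Delta_+$ and $\Delta_-$ being simultaneously present in the skew part. The rest is computational but delicate: tracking the nine blocks of $K_P$ through the block-sign similarities, sharpening each spectral estimate enough to recover the exact entries of \cref{tab0}, disentangling the role of $W_S$ (the thresholds $t\in\{1,2\}$ in $\underline h_W$, $\bar h_W$, and whether the argument is $\bar\varphi$ or $\bar\mu\bar\varphi$), and verifying throughout that the arguments of $g_1$, $g_2$, $\varrho$ are nonnegative --- which is exactly where the hypotheses $0<\bar\mu\le2$, $0<\bar\nu\le2$, $0<\bar\mu\bar\nu<2$ are used.
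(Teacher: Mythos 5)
Your treatment of the first two cases is essentially the paper's argument. Your reading of the symmetric/skew split of $K_P$ from \cref{eq10.0} is correct (the $(1,2)$ block of the symmetric part carries only $\Delta_-$, the $(1,3)$ block only $\Delta_+$, the $(2,3)$ block vanishes), and your case classification via $\bar\gamma\le1\Leftrightarrow\Delta_-=0$, $\underline\gamma\ge1\Leftrightarrow\Delta_+=0$ is right. In case (a) your inlined Bendixson-plus-Schur-congruence derivation, with the Schur complement collapsing to $\widehat D+H_W(2I-G_WG_W^T)H_W^T$ and estimated by \cref{lemma0}, is exactly what the paper's application of \cref{the2} does; in case (b) the paper invokes Proposition 2.12 of Benzi--Golub--Liesen rather than classical Bendixson, but your route (symmetric part decouples into $\bigl(\begin{smallmatrix}\Lambda&\Delta_-\Lambda^{1/2}G^T\\ \ast&G\Lambda G^T\end{smallmatrix}\bigr)$ plus $\widehat D+H_WF_WH_W^T$, Schur complement $GG^T$, bounds via \cref{le1,le3} at $\xi_1$, imaginary part via the eigenvector-elimination bound $\lambda_{\max}(\widehat B\widehat B^T)$) yields the same table entries, so that part is sound. (One small point you should make explicit: the collapse $\Delta_-^2=\Lambda-I$, hence the Schur complement $GG^T$, uses $\Gamma=\Lambda$; the paper notes that $\underline\gamma\ge1$ with $\Gamma=2\Lambda-\Lambda^2$ forces $\Lambda=I$ and is subsumed in case (a).)

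The genuine gap is case (c). You propose to "view a block-permuted, sign-adjusted copy of $K_P$ as a genuine three-by-three instance of $\widetilde K$" with $\widetilde A=\Lambda$ and $\widetilde D=G\Gamma G^T$, but no such reduction exists under the coarse three-block partition: when $\Delta_+\neq0$ and $\Delta_-\neq0$ simultaneously, the $(1,2)$ and $(2,1)$ blocks of $K_P$ are $(\Delta_++\Delta_-)\Lambda^{1/2}G^T$ and $-G\Lambda^{1/2}(\Delta_+-\Delta_-)$, which form neither a skew pair nor a symmetric pair, and block permutations or diagonal $\pm I$ similarities flip signs only in pairs, so they cannot repair this mismatch; the hypotheses of \cref{the2} are not merely unverified, the matrix is simply not of the required form. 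The missing idea --- and the paper's actual move --- is to refine the partition using the complementary supports of $\Delta_+$ and $\Delta_-$: write $\Lambda=\mathrm{diag}(\Lambda_1,\Lambda_2)$, $G=(G_1,G_2)$, $G_W=(G_{W_1},G_{W_2})$, take $\widetilde A=\Lambda_1$, $\widetilde E=H_WG_{W_1}\Lambda_1^{1/2}\Delta_1$, and absorb the $\Lambda_2$ part into the middle block, so that $\widetilde D=\bigl(\begin{smallmatrix}\Lambda_2&\Delta_2\Lambda_2^{1/2}G_2^T\\ G_2\Lambda_2^{1/2}\Delta_2&G\Lambda G^T\end{smallmatrix}\bigr)$; this regrouped matrix is a legitimate instance of $\widetilde K$, and $\widetilde D$ is then bounded by the same $\widetilde L$-congruence at $\xi_1$ nested inside the outer estimate at $\widehat\xi$. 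That nesting is exactly what produces the third-row entries of \cref{tab0}, namely the ratios $g_1(\xi_1)/g_1(\widehat\xi)$, $\underline{\mu}\underline{\nu}g_1(\xi_1)/g_1(\widehat\xi)$ and the constant $1$ (from $\lambda_{\max}(\Lambda_1)\le1$) in place of $\bar\mu$; your identification could at best yield bounds of the form $\min\{\underline{\mu},\ldots,\underline{\mu}\underline{\nu}/g_1(\widehat\xi)\}g_1(\widehat\xi)$, which are not the stated ones, and the corresponding $\rho^2$ term $\bar\nu\lambda_{\max}(M_A^{-1}A(I-M_A^{-1}A))$ likewise arises from the block $G_1\Lambda_1(I-\Lambda_1)G_1^T$ that only appears after this splitting. (A cosmetic slip: the piecewise functions $\underline h_W,\bar h_W$ enter the real-part bounds through \cref{lemma0}, not the bound on $\rho^2$.)
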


\begin{proof}	
	That $A$ and $M_A$ are SPD implies there exists an orthogonal matrix $X$ satisfying
	\cref{Lam}. Without loss of generality, we can assume that \cref{Lam} is in the form of
	\begin{equation}\label{Lambda}
		\begin{pmatrix}
			\Lambda_1 &  \\
			& \Lambda_2
		\end{pmatrix}=\Lambda=X^TA^{\frac{1}{2}}M_A^{-1}A^{\frac{1}{2}}X,
	\end{equation}
	where $\Lambda_1$ and $\Lambda_2$ are two diagonal matrices with $\lambda_{\max}(\Lambda_1)\le 1$ and $\lambda_{\min}(\Lambda_2)> 1$, respectively.

For matrices $G$, $G_W$, $H$ and $H_W$, corresponding to the orthogonal matrix $X$, in the form of \cref{HG}, it holds that
\begin{equation}\label{GGHH}
	\begin{array}{rl}
		GG^T=\widehat{S}^{-\frac{1}{2}}S\widehat{S}^{-\frac{1}{2}},&
		G_WG_W^T=W_S^{\frac{1}{2}}SW_S^{\frac{1}{2}},\\
		HH^T=\widehat{M}_S^{-\frac{1}{2}}C\widehat{S}^{-1}C^T\widehat{M}_S^{-\frac{1}{2}},
		&
		H_WH_W^T=\widehat{M}_S^{-\frac{1}{2}}CW_SC^T\widehat{M}_S^{-\frac{1}{2}}.
	\end{array}
\end{equation}

Since the symmetric matrices $\Lambda$ defined by \cref{Lambda}, $\widehat{D}$ defined by  \cref{HG},    
$GG^T$, $G_WG_W^T$, $HH^T$, $H_WH_W^T$, $\widehat{D}+H_WH^T_W$ and the diagonal matrix $\Gamma$ defined by \cref{YZG} are similar to $M_A^{-1}A$, $\widehat{M}_S^{-1}D$, $\widehat{S}^{-1}S$, $W_SS$,  $\widehat{M}_S^{-1}C\widehat{S}^{-1}C^T$,  $\widehat{M}_S^{-1}CW_SC^T$, $\widehat{M}_S^{-1}(D+CW_SC^T)$ and $Y_AA+Z_AA-Y_AAZ_AA$, respectively, it follows from \cref{eq15} that
\begin{equation}\label{AGHD}
\!\!\!\!	\begin{array}{c}
	\begin{array}{lll}
		\lambda_{\min}(\Lambda)=\underline{\mu},
		&
		\lambda_{\min}(GG^T)=\underline{\nu},
		&
		\lambda_{\min}(HH^T)=\underline{\omega},\\
		\lambda_{\max}(\Lambda)=\bar{\mu},&
		\lambda_{\max}(GG^T)=\bar{\nu},&
		\lambda_{\max}(HH^T)=\bar{\omega},\\
		\lambda_{\min}(\widehat{D})=\underline{\tau},
		&
		\lambda_{\min}(G_WG_W^T)=\underline{\varphi},&
		\lambda_{\min}(H_WH_W^T)=\underline{\phi},\\
		\lambda_{\max}(\widehat{D})=\bar{\tau},
		&
		\lambda_{\max}(G_WG_W^T)=\bar{\varphi},
		&
		\lambda_{\max}(H_WH_W^T)=\bar{\phi},
	\end{array}\\
    \lambda_{\min}\big(\widehat{D}\!+\!H_WH^T_W\big)=\underline{\kappa},
	\hspace{0.3em}
	\lambda_{\max}\big(\widehat{D}\!+\!H_WH^T_W\big)=\bar{\kappa}, 
	\hspace{0.3em}
	\lambda_{\min}(\Gamma)=\underline{\gamma},
	\hspace{0.3em}
	\lambda_{\max}(\Gamma)=\bar{\gamma}.
\end{array}
\end{equation}

From $0<\bar{\nu}\le 2$ and $0<\bar{\mu}\bar{\nu}<2$, by the definitions of $\underline{\varphi}$ and $\bar{\varphi}$ in \cref{eq15}, we can obtain $0\le\bar{\varphi}\le 2$ and $0\le\bar{\mu}\bar{\varphi}<2$.

For $H$ and $H_W$ defined, denote by
\begin{equation}\label{deltaH}
	\Delta H=H-H_W.
\end{equation} 
Since $H_W \in \{0, H\}$ according to the definition of $H_W$ in \cref{HG}, we have
\begin{equation}\label{lamdelta}
	\lambda_{\max}(\Delta H\Delta H^T)=\lambda_{\max}(HH^T-H_WH_W^T)=\bar{\omega}-\bar{\phi}.
\end{equation}

By \cref{theMK}, the preconditioned matrix  $M^{-1}K$ and the matrix $K_P$ in the form of \cref{eq10.0} have same eigenvalues.
In the following, with the help of Proposition 2.12 in \cite{benzi2006} and \cref{the2} (the generalized Bendixson Theorem) in \cref{sec:gen}, we'll estimate the upper and lower bounds of the eigenvalues of $M^{-1}K$ by equivalently estimating that of the corresponding matrix $K_P$.

Without losing generality, we'll do it in three cases corresponding to $\lambda_{\max}(\Gamma)$ and $\lambda_{\min}(\Gamma)$, i.e., $\bar{\gamma}$ and $\underline{\gamma}$.

	\textbf{Case I:}   $\lambda_{\max}(\Gamma)=\bar{\gamma}\le 1$, i.e., $\Delta_-=0$. In this case, $\Gamma \in \{0, \Lambda, 2\Lambda-\Lambda^2\}$ according to \cref{Gamma}, and $K_P$ in the form of \cref{eq10.0} can be rewritten as 
	\begin{equation}\label{tp11}
		K_P=\begin{pmatrix}
			\widetilde{A}&\widetilde{B}^T&\widetilde{E}^T\\-\widetilde{B}&\widetilde{D}&\widetilde{C}^T\\\widetilde{E}&-\widetilde{C}&\widetilde{F}
		\end{pmatrix}
	\end{equation}
	with
	\begin{equation}\label{K1}
		\begin{array}{c}
			\widetilde{A}=\Lambda,
			\hspace{1em}
			\widetilde{B}=G\Lambda^{\frac{1}{2}}\Delta_+,
			\hspace{1em}
			\widetilde{C}=-H+H_WG_W\Gamma G^T,
			\hspace{1em}
			\widetilde{D}=G\Gamma G^T,\\
			\widetilde{E}=H_WG_W\Lambda^{\frac{1}{2}}\Delta_+,
			\hspace{1em}
			\widetilde{F}=\widehat{D}+H_W\big(2I-G_W\Gamma {G^T_W}\big){H^T_W}.
		\end{array}
	\end{equation}
	Clearly, in this case, $\widetilde{A}$, $\widetilde{D}$ and $\widetilde{F}$ are real and symmetric.

	By \cref{AGHD,K1}, we can know  
	\begin{equation}\label{tp17}
		0<\underline{\mu}=\lambda_{\min}(\Lambda)=\lambda_{\min}(\widetilde{A})\le \lambda(\widetilde{A})\le\lambda_{\max}(\widetilde{A})=\lambda_{\max}(\Lambda)= \bar{\mu}
	\end{equation}
	holds true. 
	
	Since $\Delta^2_+=I-\Gamma$ follows from \cref{delta} when $\Delta_-=0$, we can know, by \cref{K1}, 
	\begin{equation}\label{FEAE}
		\widetilde{F}-\widetilde{E}\widetilde{A}^{-1}\widetilde{E}^T
		=\begin{cases}
			\big(\widehat{D}+H_WH^T_W\big)+H_W\big(I-G_WG^T_W\big)H^T_W, & \text{if} \ 0\le\bar{\varphi}\le 1,\\
		\widehat{D}+H_W\big(2I-G_WG^T_W\big)H^T_W, & \text{if} \ 1<\bar{\varphi}\le 2,
	\end{cases}
	\end{equation}
no matter how $W_S$ is determined via \cref{eq07}. Here $\widehat{D}$ and $\widehat{D}+H_WH^T_W$ are always SPS.

 Since $I-G_WG^T_W$ and $2I-G_WG^T_W$ are SPS when $0\le \bar{\varphi}\le 1$ and $1< \bar{\varphi}\le 2$, respectively, by \cref{lemma0,AGHD}, \cref{FEAE} implies that the following inequality,
\begin{equation}\label{tp1516}
	0\le\underline{h}_{W}(\bar{\varphi})+\underline{\phi}(1-\bar{\varphi})	
	\le 
	\lambda(\widetilde{F}-\widetilde{E}\widetilde{A}^{-1}\widetilde{E}^T) 
	\le  \bar{h}_{W}(\bar{\varphi})+\bar{\phi}(1-\underline{\varphi}),
\end{equation}
is true, where functions $\underline{h}_{W}$ and $\bar{h}_{W}$ are defined in \cref{hh}.
 
	Based on \cref{GGHH,K1}, $\widetilde{D}=G\Gamma G^T$, and $GG^T$ is SPD. Denote by $W_2=G^T(GG^T)^{-\frac{1}{2}}$. Then it is true that $W_2^TW_2=I$. We have, from \cref{AGHD}, by \cref{poinca,ABA}, 
	\begin{equation}\label{tp18}
	\!	\begin{cases}
		\begin{aligned}
			\lambda_{\min}\big(\widetilde{D}\big)
			&\!=\!\lambda_{\min}\big((GG^T)^{\frac{1}{2}}{W^T_2}\Gamma W_2 (GG^T)^{\frac{1}{2}}\big)
			\!\ge\! \lambda_{\min}({W^T_2}\Gamma W_2)\lambda_{\min}(GG^T)\\
			&\!\ge\! \lambda_{\min}(\Gamma)\lambda_{\min}(GG^T)\!\ge\!\underline{\gamma} \underline{\nu}, 
		\end{aligned}\\
		\begin{aligned}
			\lambda_{\max}\big(\widetilde{D}\big)
			&\!=\!\lambda_{\max}\big((GG^T)^{\frac{1}{2}}{W^T_2}\Gamma W_2 (GG^T)^{\frac{1}{2}}\big)
			\!\le\! \lambda_{\max}({W^T_2}\Gamma W_2)\lambda_{\max}(GG^T)\\
			&\!\le\! \lambda_{\max}(\Gamma)\lambda_{\max}(GG^T)\!\le\! \bar{\gamma}\bar{\nu}. 
		\end{aligned}
	\end{cases}\!\!\!\!\!\!\!\!\!
	\end{equation}

	Based on \cref{K1,le3}, we can obtain, from \cref{AGHD},
	\begin{equation}\label{tp20}
		\begin{aligned}
			\lambda_{\max}\left(\widetilde{E}\widetilde{A}^{-2}\widetilde{E}^T\right)
			&=\lambda_{\max}\left(H_WG_W(I-\Gamma)\Lambda^{-1}G^{T}_WH^{T}_W\right)\\
			&\le \lambda_{\max}\!\left(H_WH^T_W\right)\!\lambda_{\max}\!\left(G_WG^T_W\right)\!\lambda_{\max}\!\left((I-\Gamma)\Lambda^{-1}\right)\le \widehat{\xi},
		\end{aligned}
	\end{equation}
where $\widehat{\xi}$ is defined in \cref{xi1234}.	
	
	\cref{Gamma},  associated with 
	$0<\underline{\mu}\le\bar{\mu}\le 2$, deduces that $\underline{\gamma}\ge 0$, so,
	by \cref{tp1516,tp17,tp18}, $\widetilde{A}$ is SPD, and  $\widetilde{D}$ and $\widetilde{F}-\widetilde{E}\widetilde{A}^{-1}\widetilde{E}^T$ are SPS, respectively.		 	
	The application of \cref{the2} (the generalized Bendixson Theorem) on the matrix in \cref{tp11} then yields, for any eigenvalue $\lambda(K_P)$ of $K_P$,
	\begin{equation*}
		\begin{cases}
			Re(\lambda(K_{P}))\ge
			\min\Big\{
			\underline{\mu}, \underline{h}_{W}(\bar{\varphi})\!+\!\underline{\phi}(1\!-\!\bar{\varphi}), \frac{\underline{\gamma}\underline{\nu}}{g_1(\widehat{\xi})}\Big\}g_1(\widehat{\xi}),\\
			Re(\lambda(K_{P}))\le 
			\max\Big\{ 
			\bar{\mu}, \bar{h}_{W}(\bar{\varphi})+\bar{\phi}(1\!-\!\underline{\varphi}), \frac{\bar{\gamma}\bar{\nu}}{g_2(\widehat{\xi})}\Big\} g_2(\widehat{\xi}),
     \end{cases}
	\end{equation*}
	by \cref{tp1516,tp17,tp18,tp20}, where $\xi$ is given in \cref{xi1234}.  
	Thus, the first inequality in \cref{treq}, an equivalent form of the last two inequalities above, is proved for $\bar{\gamma}\le 1$.
	
	Now, let's begin to estimate the upper and lower bounds of the imaginary part for any eigenvalue of the matrix $K_P$ in the form of \cref{tp11}.

	By \cref{eq07,K1,deltaH}, we have 
	\begin{equation*}\label{imb}
	\widetilde{B}\widetilde{B}^{T}\!+\widetilde{C}^T\!\widetilde{C}\!=\!G(I\!-\!\Gamma)\Lambda G^T\!+\!
	\Delta H^T\!\!\Delta H+(G_W\Gamma G_W^T\!-\!I)H_W^TH_W(G_W\Gamma G_W^T\!-\!I).
	\end{equation*}
	Based on this equality, the  application of \cref{the2} (the generalized Bendixson Theorem) on the matrix in \cref{tp11},
	combined with \cref{AGHD,Gamma,lamdelta}, via \cref{weyl,le3},
	implies the true of the following inequality
	\begin{equation}\label{tp21}
			|Im(\lambda(K_{P}))|^2
			\le
			\lambda_{\max}\left(\widetilde{B}\widetilde{B}^{T}\!+\widetilde{C}^T\widetilde{C}\right)\\
			\le 
			\bar{\omega}-\bar{\phi}+\bar{\phi}\varrho(\bar{\gamma}\bar{\varphi},\underline{\gamma}\underline{\varphi})+\bar{\nu}
			\lambda_{\max}\left((I\!-\!\Gamma)\Lambda\right)
\!\!\!
	\end{equation}
for any eigenvalue $\lambda(K_P)$ of $K_P$. Since $(I-\Gamma)\Lambda \sim (I-Y_AA)(I-Z_AA)M^{-1}_AA$, 
the last inequality in \cref{treq}, an equivalent form of that in \cref{tp21}, is proved for $\bar{\gamma}\le 1$.

	\textbf{Case II:} $\lambda_{\min}(\Gamma)=\underline{\gamma}\ge 1$, i.e., $\Delta_+=0$. It is easy to know from \cref{Gamma} that this happens only for $\Gamma=2\Lambda-\Lambda^2$ with $\underline{\mu}=\bar{\mu}=1$ and for $\Gamma=\Lambda$.
    Since the former is a special one in Case I, we will not discuss it repeatedly. In the following we only consider for $\Gamma=\Lambda$. Thus, $\bar{\gamma}=\bar{\mu}$ and $\underline{\gamma}=\underline{\mu}\ge 1$.

	In this case, the matrix $K_P$ in the form of \cref{eq10.0} can be rewritten as 
	\begin{equation}\label{tp1}
		K_P=\begin{pmatrix}
			\widehat{A} & \widehat{B}^T\\
			-\widehat{B} & \widehat{C} 
		\end{pmatrix}
	\end{equation}
	with
	\begin{equation}\label{ABC}
		\begin{array}{l}
			\widehat{A}=
			\begin{pmatrix}
				\Lambda &  \Delta_-\Lambda^{\frac{1}{2}} G^T\\
				G\Lambda^{\frac{1}{2}}\Delta_- & G\Lambda G^T 
			\end{pmatrix},
			\\ 
			\widehat{B}=
			\begin{pmatrix}
				H_WG_W\Lambda^{\frac{1}{2}} \Delta_- & -H+H_WG_W\Lambda G^T
			\end{pmatrix},
			\\ [1mm]
				\widehat{C}=\widehat{D}+H_W\left(2I-G_W\Lambda {G^T_W}\right){H_W^T}.
		\end{array}
	\end{equation}
	
	Let 
	\begin{equation*}
		\widehat{L}=
		\begin{pmatrix}
			I&  \Delta_-\Lambda^{-\frac{1}{2}} G^T\\
			0&I 
		\end{pmatrix}.
	\end{equation*}
	Thanks to the fact that $\Delta^2_-=\Lambda -I$ follows from \cref{delta} and $\Gamma=\Lambda$, we have, by \cref{ABC},
	\begin{equation} \label{A1}
		\widehat{A}=\widehat{L}^T
		\begin{pmatrix}
			\Lambda &  0\\
			0& GG^T 
		\end{pmatrix} \widehat{L}
		\sim
		\begin{pmatrix}
			\Lambda &  0\\
			0& GG^T 
		\end{pmatrix}
		(\widehat{L}\widehat{L}^T).
	\end{equation}
	
	By \cref{le3,AGHD}, we can obtain, 
	\begin{equation*}
		\lambda_{\max}(\Delta_-\Lambda^{-\frac{1}{2}} G^TG\Lambda^{-\frac{1}{2}}\Delta_-)
		\le \lambda_{\max}( G^TG)\lambda_{\max}(\Lambda^{-1}(\Lambda -I))
		=\bar{\nu}-\frac{\bar{\nu}}{\bar{\mu}}=\xi_1.
\end{equation*} 
It follows from \cref{le1} that
\begin{equation*}
	\begin{cases}
		\lambda(\widehat{L}\widehat{L}^T)\ge 
		g_1(\lambda_{\max}(\Delta_-\Lambda^{-\frac{1}{2}} G^TG\Lambda^{-\frac{1}{2}}\Delta_-)) \ge 
		g_1(\xi_1),	\\
		\lambda
		(\widehat{L}\widehat{L}^T)
		\le g_2(\lambda_{\max}(\Delta_-\Lambda^{-\frac{1}{2}} G^TG\Lambda^{-\frac{1}{2}}\Delta_-))\le g_2(\xi_1),
	\end{cases}
\end{equation*}
which deduces, by \cref{le3},  \cref{AGHD,A1},
\begin{equation}\label{tp6}
	0<\min\{\underline{\mu}, \underline{\nu} \} 
	g_1(\xi_1)
	\le\lambda_{\min}(\widehat{A})
	\le\lambda(\widehat{A})
	\le \lambda_{\max}(\widehat{A})
	\le \max\{\bar{\mu}, \bar{\nu} \} g_2(\xi_1).
\end{equation}

Since the matrix $\widehat{C}$ defined in \cref{ABC} can be rewritten as
	\begin{equation*}
		\widehat{C}=
		\begin{cases}
			\big(\widehat{D}+H_WH^T_W\big)+H_W\left(I-G_W\Lambda {G^T_W}\right){H_W^T}, & \text{if} \ 0 \le \bar{\mu}\bar{\varphi}\le 1,\\
			\widehat{D}+H_W\left(2I-G_W\Lambda {G^T_W}\right){H_W^T}, & \text{if} \ 1 < \bar{\mu}\bar{\varphi}< 2,
		\end{cases}
\end{equation*}
 by the similar technique for \cref{tp1516}, we have that
\begin{equation}\label{tp78}		\underline{h}_{W}(\bar{\mu}\bar{\varphi})+\underline{\phi}(1-\bar{\mu}\bar{\varphi})\le  \lambda(\widehat{C})\le \bar{h}_{W}(\bar{\mu}\bar{\varphi})+\bar{\phi}(1-\underline{\mu} \underline{\varphi})
\end{equation}
holds true.

It can be checked easily the following equality  
\begin{equation}\label{tp5}
	\widehat{B}\widehat{B}^T=\Delta H\Delta H^T+
	H_W\big(G_W(\Lambda^2 -\Lambda) G_W^T+\big(G_W\Lambda G_W^T-I\big)^2\big)H_W^T 
\end{equation}
by \cref{eq07,ABC,deltaH}. Since $\Lambda^2-\Lambda$ is SPS by $\underline{\mu}\ge 1$, we can obtain, based on \cref{le3}, 	
\begin{equation}\label{2h}
	\!\!\!\begin{cases}
		\!\lambda_{\max}\big(H_W\big(G_W(\Lambda^2 \!\!-\!\Lambda) G_W^T\big)H_W^T\big)\!\!\le\! \lambda_{\max}\big(\Lambda^2\!\!-\!\Lambda\big)\lambda_{\max}\!\big(G_WG_W^T\big)\lambda_{\max}\big(H_WH_W^T \big)\!,\\
		\!\lambda_{\max}\big(H_W\big(G_W\Lambda G_W^T\!-\!I\big)^2H_W^T\big)
		\!\le\! \lambda_{\max}\big(G_W\Lambda G_W^T\!-\!I\big)^{2}\lambda_{\max}\big(H_WH_W^T \big).
	\end{cases}\!\!\!\!\!\!\!\!\!\!
\end{equation}
Moreover, by \cref{weyl}, two inequalities of \cref{2h}, combined with \cref{eq19,AGHD,tp5,lamdelta}, deduce the inequality
\begin{equation}\label{tp9}
	\lambda_{\max}\big(\widehat{B}\widehat{B}^T\big)
	\le \bar{\omega}-\bar{\phi}+
	\bar{\phi}\left((\bar{\mu}-1)\bar{\mu}\bar{\varphi}+\varrho(\bar{\mu}\bar{\varphi}, \underline{\mu}\underline{\varphi})\right).
\end{equation}

By \cref{tp6}, $\widehat{A}$ is SPD.  
Thanks to the definition of $\widehat{C}$ in \cref{ABC} and inequality in \cref{tp78}, $\widehat{C}$ is SPS when $W_S=0$, and SPD when $W_S \neq 0$ since $\underline{\omega}$ and $\underline{\tau}$ cannot be zero number at the same time according to the assumption of $C$ and $D$ described in the first paragraph in \cref{sec:int}.

In the case of $W_S=0$, \cref{HG}, together with \cref{ABC}, implies
\begin{equation}\label{BC}
	\widehat{B}=\begin{pmatrix}0& -H\end{pmatrix},
	\hspace{1em}
	\widehat{C}=\widehat{D}=\widehat{M}_S^{-\frac{1}{2}}D\widehat{M}_S^{-\frac{1}{2}}.
\end{equation}
When $\widehat{C}$ is SPS but not SPD, by \cref{BC}, $D$ is SPS but not SPD, too. Therefore, $C$ has full row rank according to the assumption described in the first paragraph in \cref{sec:int} again. It follows that $H$, defined in \cref{HG}, has full row rank. By  \cref{BC} again, $\widehat{B}$ has full row rank.

Now, by applying Proposition 2.12 in \cite{benzi2006} on the matrix $K_P$ in the form of \cref{tp1}, we can easily obtain respectively, for any eigenvalue $\lambda(K_P)$ of $K_P$, 
\begin{equation*}
\begin{cases}
		\!Re(\lambda(K_P)) \!\ge\!
		\min\left\{\lambda_{\min}(\widehat{A}), \lambda_{\min}(\widehat{C})\right\}
		\!\ge\!
		\min\{ \underline{\mu}g_1(\xi_1),\underline{\nu}g_1(\xi_1), \underline{h}_{W}(\bar{\mu}\bar{\varphi})\!+\!\underline{\phi}(1\!\!-\!\bar{\mu}\bar{\varphi})\},\\[2mm]
		\!Re(\lambda(K_P)) \!\le\!
		\max\!\left\{\lambda_{\max}(\widehat{A}), \lambda_{\max}(\widehat{C})\right\}
		\!\le\!
		\max\!\left\{\bar{\mu}g_2(\xi_1),\bar{\nu}g_2(\xi_1),  \bar{h}_{W}(\bar{\mu}\bar{\varphi})\!+\!\bar{\phi}(1\!\!-\!\underline{\mu}\underline{\varphi})\right\}
\end{cases}\!\!\!\!\!\!\!
\end{equation*}
by \cref{tp6,tp78}, where $\xi_1$ is given in \cref{xi1234}. Therefore, the first inequality in \cref{treq}, an equivalent form of the last two inequalities above, is proved for $\underline{\gamma}\ge 1$.

Based on \cref{tp9}, applying Proposition 2.12 in \cite{benzi2006} on the matrix $K_P$ in the form of \cref{tp1} implies the true of the following inequality
\begin{equation}\label{im2}
	|Im(\lambda(K_P))|^2
	\! \le \!
	\lambda_{\max}\big(\widehat{B}\widehat{B}^T\big)
	\! \le \!
	\bar{\omega}-\bar{\phi}+
	\bar{\phi}\left(\bar{\varphi}\bar{\mu}(\bar{\mu}-1)+\varrho(\bar{\mu}\bar{\varphi}, \underline{\mu}\underline{\varphi})\right)
\end{equation}
for any eigenvalue $\lambda(K_P)$ of $K_P$. 
It follows that the last inequality in  \cref{treq}, an equivalent form of that in \cref{im2}, is proved for $\underline{\gamma}\ge 1$.

By the way, one can check that the bounds obtained are still true for $\Gamma=2\Lambda-\Lambda^2$ with $\underline{\mu}=\bar{\mu}=1$ and are the same as that in \textbf{Case I}.

\textbf{Case III:}	$\lambda_{\min}(\Gamma)=\underline{\gamma}<1<\bar{\gamma}=\lambda_{\max}(\Gamma)$, i.e.,  $\Delta_-\neq 0$ and $\Delta_+\neq0$.
It is easy to know that this happens only for $\Gamma=\Lambda$. Then, we have $\underline{\mu}=\underline{\gamma}<1<\bar{\gamma}=\bar{\mu}$.

Based on \cref{Lambda}, we have
\begin{equation*}
	\Gamma=\Lambda=
	\begin{pmatrix}
		\Lambda_1 &  \\
		& \Lambda_2
	\end{pmatrix},
	\hspace{1em}
	\Delta_+=
	\begin{pmatrix}
		\Delta_1 &  \\
		&0
	\end{pmatrix},
	\hspace{1em}
	\Delta_-=
	\begin{pmatrix}
		0 &  \\
		&\Delta_2
	\end{pmatrix},
\end{equation*}
where $\Delta_1$ and $\Delta_2$ are diagonal and positive matrices satisfying
\begin{equation}\label{tp23.1}
	\Delta^2_1=I-\Lambda_1,
	\hspace{1em}
	\Delta^2_2=\Lambda_2-I,
\end{equation}
and
\begin{equation}\label{tp23.2}
	0<\underline{\mu}=\lambda_{\min}(\Lambda)\le\lambda(\Lambda_1)\le 1 <\lambda(\Lambda_2)\le\lambda_{\max}(\Lambda)=\bar{\mu}.
\end{equation}

Rewrite $G$ and $G_W$ in \cref{HG} as follows:
\begin{equation*}\label{tp23}
	G=
	\begin{pmatrix}
		G_1 & G_2 
	\end{pmatrix},
	\hspace{1em}
	G_W=
	\begin{pmatrix}
		G_{W_1} & G_{W_2} 
	\end{pmatrix},
\end{equation*}
where the numbers of the columns of $G_1$ and $G_{W_1}$ are the same as that of $\Lambda_1$, and the ones of $G_2$ and $G_{W_2}$ are the same as that of $\Lambda_2$. Then,  the matrix $K_P$ in the form of \cref{eq10.0} can be rewritten as 
\begin{equation}\label{eq19.1}
	K_P=\begin{pmatrix}
		\widetilde{A}&\widetilde{B}^T&\widetilde{E}^T\\-\widetilde{B}&\widetilde{D}&\widetilde{C}^T\\\widetilde{E}&-\widetilde{C}&\widetilde{F}
	\end{pmatrix}
\end{equation}
with
\begin{equation}\label{ABCD}
	\!\!\!
	\begin{array}{c}
		\widetilde{A} = \Lambda_1,
		\hspace{1em}
		\widetilde{C} = 
		\begin{pmatrix}
			H_WG_{W_2}\Lambda^{\frac{1}{2}}_2\Delta_2 & -H+H_WG_W\Lambda G^T 
		\end{pmatrix},
		\\
		\widetilde{B} = 
		\begin{pmatrix}
			0\\
			G_1\Lambda^{\frac{1}{2}}_1\Delta_1
		\end{pmatrix},
		\hspace{1em}
		\widetilde{D} = 
		\begin{pmatrix}
			\Lambda_2 & \Delta_2 \Lambda^{\frac{1}{2}}_2 G^T_2\\
			G_2\Lambda^{\frac{1}{2}}_2\Delta_2 & G\Lambda G^T
		\end{pmatrix},\\
		\widetilde{E} = H_WG_{W_1}\Lambda^{\frac{1}{2}}_1\Delta_1,
		\hspace{0.8em}
		\widetilde{F} = \widehat{D}+H_W\left(2I-G_W\Lambda {G^T_W}\right){H^T_W}.
	\end{array}
	\!\!\!\!\!\!\!\!\!
\end{equation}

Since $\widetilde{A} = \Lambda_1$ in \cref{ABCD}, we have, from \cref{tp23.2}, 
\begin{equation}\label{tp30}
	0<\underline{\mu} 
	\le \lambda_{\min}(\Lambda_1)
	=\lambda_{\min}(\widetilde{A})\le\lambda(\widetilde{A})
	\le\lambda_{\max}(\widetilde{A})
	=\lambda_{\max}(\Lambda_1) \le 1.
\end{equation}

Denote by 
\begin{equation*}
	\widetilde{L}=
	\begin{pmatrix}
		I& \Delta_2 \Lambda^{-\frac{1}{2}}_2 G^T_2\\
		0 & I
	\end{pmatrix}.
\end{equation*}
It yields that, according to  \cref{ABCD}, 
\begin{equation*}\label{tp29}
	\widetilde{D} 
	=\widetilde{L}^T
	\begin{pmatrix}
		\Lambda_2 & 0\\
		0 & G_1\Lambda_1 G^T_1+G_2G^T_2
	\end{pmatrix}
	\widetilde{L}
	\sim
	\begin{pmatrix}
		\Lambda_2 & 0\\
		0 & G_1\Lambda_1 G^T_1+G_2G^T_2
	\end{pmatrix}
	(\widetilde{L}\widetilde{L}^T).
\end{equation*}
Since, by \cref{le3}, \cref{tp23.1,AGHD}, 
\begin{equation*}
		\lambda_{\max}(\Delta_2\Lambda_2^{-\frac{1}{2}} G_2^T\!G_2\Lambda_2^{-\frac{1}{2}}\!\Delta_2)
		\!=\!\lambda_{\max}( (G_2^T\!G_2)(\Lambda_2^{-1}\!\Delta^2_2))
		\!\le\! \lambda_{\max}(G^T\!G)\lambda_{\max}(\Lambda_2^{-1}\!\Delta^2_2)
		\!=\!\xi_1,
\end{equation*}
where $\xi_1$ is defined in \cref{xi1234}, it yields, in view of \cref{le1}, that
\begin{equation*}
	\begin{cases}
		\lambda
		(\widetilde{L}\widetilde{L}^T)\ge 
		g_1(\lambda_{\max}(\Delta_2\Lambda_2^{-\frac{1}{2}} G_2^TG_2\Lambda_2^{-\frac{1}{2}}\Delta_2))
		\ge g_1(\xi_1), \\
		\lambda
		(\widetilde{L}\widetilde{L}^T)
		\le g_2(\lambda_{\max}(\Delta_2\Lambda_2^{-\frac{1}{2}} G_2^TG_2\Lambda_2^{-\frac{1}{2}}\Delta_2))\le g_2(\xi_1),
	\end{cases}
\end{equation*}
which deduces, by \cref{le3,AGHD,tp23.2},
\begin{equation}\label{tp32.1}
	0<\min\{1, \underline{\mu}\underline{\nu}\} 
	g_1(\xi_1)
	\le \lambda_{\min}\big(\widetilde{D}\big)
	\le \lambda\big(\widetilde{D}\big)
	\le \lambda_{\max}\big(\widetilde{D}\big)
	\le \max\{\bar{\mu}, \bar{\nu}\} g_2(\xi_1).
\end{equation}

Since 
\begin{equation*}\label{tp26}
		\widetilde{F}\!-\!\widetilde{E}\widetilde{A}^{-1}\widetilde{E}^T
		\!=\!
		\begin{cases}
		\big(\widehat{D}\!+\!H_WH^T_W\big)\!+\! H_W\left(I\!-\!G_W\text{Diag}(I, \Gamma_2)G^T_W\right)H^T_W, & \text{if} \ 0 \le\bar{\mu}\bar{\varphi}\le 1,\\
		\widehat{D}+ H_W\left(2I-G_W\text{Diag}(I, \Gamma_2)G^T_W\right)H^T_W, & \text{if} \  1<\bar{\mu}\bar{\varphi}< 2
	    \end{cases}
\end{equation*}
is true by \cref{ABCD}, according to  \cref{lemma0,AGHD}, the following inequality
\begin{equation}\label{tp278} 
	0\le\underline{h}_{W}(\bar{\mu}\bar{\varphi})+\underline{\phi}(1-\bar{\mu}\bar{\varphi})\le\lambda(\widetilde{F}-\widetilde{E}\widetilde{A}^{-1}\widetilde{E}^T)\le\bar{h}_{W}(\bar{\mu}\bar{\varphi})+\bar{\phi}(1-\underline{\varphi})
\end{equation}
holds true, thanks to $\widehat{D}$ and $G_W\text{Diag}(I, \Lambda_2)G^T_W$ are SPS and $0\le \bar{\mu}\bar{\varphi}< 2$.

Based on the definitions of $\widetilde{E}$ and $\widetilde{A}$ in \cref{ABCD}, by the aid of \cref{le3}, we have
\begin{equation}\label{tp33}
	\begin{aligned}
		\lambda_{\max}\left(\widetilde{E}\widetilde{A}^{-2}\widetilde{E}^T\right)
		\!&=\!\lambda_{\max}\left(H_WG_{W_1}(I-\Lambda_1)\Lambda_1^{-1}G^{T}_{W_1}H^{T}_W\right)\\
		\!&\le\! \lambda_{\max}\!\left(H_WH^T_W\right)\!\lambda_{\max}\!\left(G_{W_1}G^T_{W_1}\right)\!\lambda_{\max}\!\left((I-\Lambda_1)\Lambda_1^{-1}\right)
		\!\le \! \widehat{\xi}
	\end{aligned}
\end{equation}
from \cref{AGHD},\cref{tp23.2} and that $\underline{\gamma}=\underline{\mu}$ in this case. Here $\widehat{\xi}$ is defined in \cref{xi1234}.

 Since $\widetilde{A}$ and $\widetilde{D}$ are SPD and  $\widetilde{F}-\widetilde{E}\widetilde{A}^{-1}\widetilde{E}^T$ is SPS from \cref{tp30,tp32.1,tp278}, respectively, the application of \cref{the2} (the generalized Bendixson Theorem) then yields, for any eigenvalue $\lambda(K_P)$ of $K_P$, 
\begin{equation*}
	\begin{cases}
		Re(\lambda(K_{P}))\ge 
        \min\big\{
        \underline{\mu}, \underline{h}_{W}(\bar{\mu}\bar{\varphi})+\underline{\phi}(1-\bar{\mu}\bar{\varphi}), \frac{g_1(\xi_1)}{g_1(\widehat{\xi})}, 
        \frac{\underline{\mu}\underline{\nu}g_1(\xi_1)}{g_1(\widehat{\xi})}\big\}g_1(\widehat{\xi}),\\[2mm]
		Re(\lambda(K_{P}))\le 
        \max\big\{
        1,\bar{h}_{W}(\bar{\mu}\bar{\varphi})+\bar{\phi}(1-\underline{\varphi}), \frac{\bar{\mu}g_2(\xi_1)}{g_2(\widehat{\xi})},\frac{\bar{\nu}g_2(\xi_1)}{g_2(\widehat{\xi})}\big\} g_2(\widehat{\xi}), 
    \end{cases}
\end{equation*}
by \cref{tp30,tp32.1,tp278}, where $\xi_1$ and $\widehat{\xi}$ are defined in \cref{xi1234}. Then, the first inequality in \cref{treq}, an equivalent form of the last two inequalities above, is proved for $\underline{\gamma}<1<\bar{\gamma}$.

According to \cref{eq07,HG,deltaH,tp23.1}, we have
\begin{equation}\label{im3}
	\begin{aligned}
\widetilde{B}\widetilde{B}^{T}+\widetilde{C}^T\widetilde{C}
=&\begin{pmatrix}
	0&0\\
	0& G_1\Lambda_1(I-\Lambda_1)G^T_1
\end{pmatrix}
+H_WG_{W_{2}}\Lambda_2(\Lambda_2-I)G^T_{W_{2}}H^T_W\\
&\ \ \ \ \ \ +\Delta H\Delta H^T+H_W\big(G_{W}\Lambda G_{W}^T-I\big)^{2}H^T_W.
	\end{aligned}
\end{equation}

By \cref{weyl,le3}, we obtained 
\begin{equation}\label{tp34}
	\!\!\!	\begin{aligned}		\lambda_{\max}\big(\widetilde{B}\widetilde{B}^{T}\!\!\!+\!\widetilde{C}^T\!\widetilde{C}\big)\!
		&\!\le \!
		\lambda_{\max}\!\big(H_{\!W}\!H_{\!W}^T\big)
		\!\!\cdot\!\!\big(\lambda_{\max}\!\big(G_{\!W_{\!2}}\Lambda_2(\Lambda_2\!\!-\!\!I)G^T_{\!W_{\!2}}\big)\!\!+\!\!\lambda_{\max}\!\big(G_{\!W}\!\Lambda G_{\!W}^T\!\!-\!\!I\big)^{\!2}\big)\\	
		&\ \ \ \ \ \ \ +\lambda_{\max}(\Delta H\Delta H^T)+\lambda_{\max}\big(G_1\Lambda_1(I-\Lambda_1)G^T_1\big)\\
		&\!\le 
		\bar{\omega}-\bar{\phi}+\bar{\phi}\left(
		(\bar{\mu}-1)\bar{\mu}\bar{\varphi}
		+ \varrho(\bar{\mu}\bar{\varphi}, \underline{\mu}\underline{\varphi})\right)
		+\bar{\nu}\lambda_{\max}(\Lambda-\Lambda^2).
	\end{aligned}\!\!\!\!
\end{equation}
based on \cref{AGHD,lamdelta,tp23.2} and function $\rho$ defined by \cref{eq19}.

Since $	\Lambda-\Lambda^2 \sim M^{-1}_AA(I\!-\!M^{-1}_AA)$, based on \cref{tp34}, the application of \cref{the2} (the generalized Bendixson Theorem) yields the true of the inequality, for any eigenvalue $\lambda(K_P)$ of $K_P$, 
\begin{equation*}
	\begin{aligned}
		|Im(\lambda(K_P))|^2
		&\le
		\lambda_{\max}\left(\widetilde{B}\widetilde{B}^{T}+\widetilde{C}^T\widetilde{C}\right)\\
		&\le
		 \bar{\omega}-\bar{\phi}+\bar{\phi}\left(
		(\bar{\mu}-1)\bar{\mu}\bar{\varphi}
		+ \varrho(\bar{\mu}\bar{\varphi}, \underline{\mu}\underline{\varphi})\right)
		+\bar{\nu}\lambda_{\max}\left(M^{-1}_AA(I\!-\!M^{-1}_AA)\right).
	\end{aligned}
\end{equation*}
Therefore, the last inequality in \cref{treq}, an equivalent form of the above inequality, is proved for this case.

The above descussion for \textbf{Cases I}, \textbf{II} and \textbf{III} shows that the inequalities in \cref{treq} hold true for any eigenvalue $\lambda$ of $M^{-1}K$. The proof is completed.
\end{proof}

\section{Eight inexact block factorization preconditioners}
\label{sec:choice}
According to the different choices of $W_S$, $Y_A$ and $Z_A$ in \cref{eq07}, we have eight inexact block factorization preconditioners in the form of $M$ defined by \cref{M}. They are listed below one by one.\\ 
\textbf{the inexact block diagonal preconditioner:}
\begin{equation}\label{eq4}
M_d=
\begin{pmatrix}
M_A&0&0\\0&-\widehat{S}&0\\0&0&\widehat{M}_S
\end{pmatrix}, 
\end{equation}
\textbf{the inexact block upper triangular preconditioner:}
\begin{equation} \label{eq4.1}
M_{ut}=
\begin{pmatrix}
M_A&0&0\\0&-\widehat{S}&0\\0&0&\widehat{M}_S
\end{pmatrix}
\begin{pmatrix}
I&M_A^{-1}B^T&0\\
0&I&0\\
0&0&I
\end{pmatrix},
\end{equation}
\textbf{the inexact block lower triangular preconditioner:}
\begin{equation}\label{eq4.2}
M_{lt}=
\begin{pmatrix}
I&0&0\\
BM_A^{-1}&I&0\\
0&0&I
\end{pmatrix}
\begin{pmatrix}
M_A & 0&0 \\
0 & -\widehat{S}&0\\
0&0&\widehat{M}_S
\end{pmatrix},
\end{equation}
\textbf{five block approximate factorization preconditioners:}
\begin{equation}\label{eq4.4}
M_{f_1}=
\begin{pmatrix}
I&0&0\\
BM_A^{-1}&I&0\\
0&0&I
\end{pmatrix}
\begin{pmatrix}
M_A & 0&0 \\
0 & -\widehat{S}&0\\
0&0&\widehat{M}_S
\end{pmatrix}
\begin{pmatrix}
I&M_A^{-1}B^T&0\\
0&I&0\\
0&0&I
\end{pmatrix},
\end{equation}
\begin{equation}\label{eq4.7}
M_{f_2}=
\begin{pmatrix}
I&0&0\\
0&I&0\\
0&-C\widehat{S}^{-1}&I
\end{pmatrix}
\begin{pmatrix}
M_A & 0&0 \\
0 & -\widehat{S}&0\\
0&0&\widehat{M}_S
\end{pmatrix}
\begin{pmatrix}
I&0&0\\
0&I&-\widehat{S}^{-1}C^T\\
0&0&I
\end{pmatrix},
\end{equation}
\begin{equation}\label{eq4.5}
M_{f_3}=\begin{pmatrix}
I&0&0\\
0&I&0\\
0&-C\widehat{S}^{-1}&I
\end{pmatrix}
\begin{pmatrix}
M_A & 0&0 \\
0 & -\widehat{S}&0\\
0&0&\widehat{M}_S
\end{pmatrix}
\begin{pmatrix}
I&M_A^{-1}B^T&0\\
0&I&-\widehat{S}^{-1}C^T\\
0&0&I
\end{pmatrix},
\end{equation}
\begin{equation}\label{eq4.6}
M_{f_4}=
\begin{pmatrix}
I&0&0\\
BM_A^{-1}&I&0\\
0&-C\widehat{S}^{-1}&I
\end{pmatrix}
\begin{pmatrix}
M_A & 0&0 \\
0 & -\widehat{S}&0\\
0&0&\widehat{M}_S
\end{pmatrix}
\begin{pmatrix}
I&0&0\\
0&I&-\widehat{S}^{-1}C^T\\
0&0&I
\end{pmatrix},
\end{equation}
\begin{equation}\label{eq4.3}
M_{f_5}=
\begin{pmatrix}
I&0&0\\
BM_A^{-1}&I&0\\
0&-C\widehat{S}^{-1}&I
\end{pmatrix}\!\!\!\!
\begin{pmatrix}
M_A & 0&0 \\
0 & -\widehat{S}&0\\
0&0&\widehat{M}_S
\end{pmatrix}\!\!\!\!
\begin{pmatrix}
I&M_A^{-1}B^T&0\\
0&I&-\widehat{S}^{-1}C^T\\
0&0&I
\end{pmatrix},
\end{equation}
where $M_A$, $\widehat{S}$ and $\widehat{M}_S$ are SPD approximations to $A$, $S$ and $M_S$, respectively, as
described in \cref{sec:pre}.

In the following, the bounds of real and imaginary parts of the eigenvalues of the coefficient matrix $K$ preconditioned by these eight preconditioners are obtained. Here and latter, for abbreviation, we use  $M^{-1}_*K$ to describe the preconditioned  coefficient matrix with $*=d, ut, lt, f_1, f_2, f_3, f_4, f_5$, in turn.

\begin{theorem}\label{thm4}
	Let $K$ be the coefficient matrix defined in \cref{eq1.0}.	
	Assume that $M_A$, $\widehat{S}$ and $\widehat{M}_S$ are SPD matrices and $\underline{\mu}$, $\bar{\mu}$, $\underline{\nu}$, $\bar{\nu}$, $\underline{\tau}$, $\bar{\tau}$, $\underline{\omega}$, $\bar{\omega}$, $\underline{\delta}$ and $\bar{\delta}$ are defined by \cref{eq15} with $0<\bar{\mu}\le 2$, $0<\bar{\nu}\le 2$ and $0<\bar{\mu}\bar{\nu}<2$. Denote by 
	\begin{equation}\label{eqzen2}
		\begin{array}{c}
		\underline{\delta}=\min \{\underline{\mu}(2-\underline{\mu}), \bar{\mu}(2-\bar{\mu})\},
		\hspace{1em}
		\bar{\delta}=\max \{\underline{\mu}(2-\underline{\mu}), \bar{\mu}(2-\bar{\mu}), 1\},\\[2mm]
			\underline{h}(t)=\begin{cases}
				\underline{\vartheta}, & \text{if} \ 0\le t \le 1,\\
				\underline{\tau}+\underline{\omega}, & \text{if} \ 1< t \le 2,\\
			\end{cases}
			\hspace{1em}
			\bar{h}(t)=\begin{cases}
				\bar{\vartheta}, & \text{if} \ 0\le t \le 1,\\
				\bar{\tau}+\bar{\omega}, & \text{if} \ 1< t \le 2.
			\end{cases}
		\end{array}
	\end{equation}
	Then for each  $*= d$, $ ut$, $lt$, $f_1$, $f_2$, $f_3$, $f_4$, $f_5$, any eigenvalue $\lambda_*$ of $M^{-1}_*K$, with $M_*$ defined in \cref{eq4,eq4.1,eq4.2,eq4.3,eq4.4,eq4.5,eq4.6,eq4.7}, satisfies
		\begin{equation}\label{tre}
				\underline{\eta}_* \le Re(\lambda_*) \le \bar{\eta}_*,
				\hspace{1em}
				|Im(\lambda_*)| \le \rho_*,
			\end{equation}
	where $\underline{\eta}_*$, $\bar{\eta}_*$ and $\rho_*$ are described in \cref{tab1}.
\end{theorem}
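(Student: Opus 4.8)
The plan is to obtain the theorem purely by specializing \cref{thm3}: each of the eight preconditioners in \cref{eq4,eq4.1,eq4.2,eq4.3,eq4.4,eq4.5,eq4.6,eq4.7} is the general $M$ of \cref{M} for one admissible triple $(W_S,Y_A,Z_A)$ from \cref{eq07}, so \cref{tab1} is nothing but what the three-case bound of \cref{thm3} (recorded in \cref{tab0}) becomes after substitution. For each $M_*$ I would run four routine steps: (i) read off the triple; (ii) compute the derived spectral data $\underline{\gamma},\bar{\gamma},\underline{\varphi},\bar{\varphi},\underline{\phi},\bar{\phi},\underline{\kappa},\bar{\kappa},\widehat{\xi},\xi_1$; (iii) decide which of the three cases of \cref{thm3} is in force; (iv) substitute into \cref{tab0} and simplify.

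\textbf{Steps (i)--(ii).} Reading off the factored forms, $(Y_A,Z_A,W_S)=(0,0,0)$ for $M_d$, $(0,M_A^{-1},0)$ for $M_{ut}$, $(M_A^{-1},0,0)$ for $M_{lt}$, $(M_A^{-1},M_A^{-1},0)$ for $M_{f_1}$, and the same four triples with $W_S$ replaced by $\widehat{S}^{-1}$ for $M_{f_2},M_{f_3},M_{f_4},M_{f_5}$. By \cref{Gamma} this gives $\Gamma=0$ ($M_d,M_{f_2}$), $\Gamma=\Lambda$ ($M_{ut},M_{lt},M_{f_3},M_{f_4}$), or $\Gamma=2\Lambda-\Lambda^2$ ($M_{f_1},M_{f_5}$), so $(\underline{\gamma},\bar{\gamma})$ is $(0,0)$, $(\underline{\mu},\bar{\mu})$, or a pair trapped between $\underline{\delta}$ and $\bar{\delta}$ of \cref{eqzen2} (via concavity of $t\mapsto t(2-t)$ and $t(2-t)\le 1$). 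On the Schur side I would use that $W_S=0$ forces $\underline{\varphi}=\bar{\varphi}=\underline{\phi}=\bar{\phi}=0$, $\underline{\kappa}=\underline{\tau}$, $\bar{\kappa}=\bar{\tau}$, $\widehat{\xi}=0$ (hence $g_1(\widehat{\xi})=g_2(\widehat{\xi})=1$), whereas $W_S=\widehat{S}^{-1}$ gives $\underline{\varphi}=\underline{\nu}$, $\bar{\varphi}=\bar{\nu}$, $\underline{\phi}=\underline{\omega}$, $\bar{\phi}=\bar{\omega}$, $\underline{\kappa}=\underline{\vartheta}$, $\bar{\kappa}=\bar{\vartheta}$ — exactly the footnote after \cref{eq15} — so $\underline{h}_W,\bar{h}_W$ of \cref{hh} collapse to $\underline{\tau},\bar{\tau}$ in the first group and to $\underline{h},\bar{h}$ of \cref{eqzen2} in the second. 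Finally $\lambda_{\max}\big((I-Y_AA)(I-Z_AA)M_A^{-1}A\big)$ and $\lambda_{\max}\big(M_A^{-1}A(I-M_A^{-1}A)\big)$ reduce, for each triple, to explicit expressions (or clean bounds) in $\underline{\mu},\bar{\mu}$.

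\textbf{Steps (iii)--(iv).} Since $\bar{\gamma}=0$ for $M_d,M_{f_2}$ and $\bar{\gamma}\le 1$ for $M_{f_1},M_{f_5}$, \textbf{Case I} of \cref{thm3} covers those four (for $M_{f_1},M_{f_5}$ the degenerate overlap $\underline{\mu}=\bar{\mu}=1$ is handled by the remark closing the proof of \cref{thm3} that Cases I and II then agree). For $M_{ut},M_{lt},M_{f_3},M_{f_4}$ one has $(\underline{\gamma},\bar{\gamma})=(\underline{\mu},\bar{\mu})$, so I would split into $\bar{\mu}\le 1$ (Case I), $\underline{\mu}\ge 1$ (Case II), $\underline{\mu}<1<\bar{\mu}$ (Case III) and record the three bounds as the single piecewise entry of \cref{tab1}. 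In each instance, substituting the data of Step (ii) into the matching row of \cref{tab0}, using $g_1(0)=g_2(0)=1$, evaluating the surviving $g_1,g_2$ at $\widehat{\xi},\xi_1$ of \cref{xi1234}, and collapsing the $\min/\max$ expressions turns \cref{treq} into \cref{tre} with the tabulated constants.

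\textbf{The hard part.} The analysis itself is already done in \cref{thm3}; the real labour — and where I expect to spend the most care — is the bookkeeping in Steps (iii)--(iv) for the four one-sided/triangular preconditioners $M_{ut},M_{lt},M_{f_3},M_{f_4}$, for which all three cases of \cref{thm3} are genuinely relevant because the position of the spectrum of $M_A^{-1}A$ relative to $1$ is not fixed a priori, so one must check that the piecewise bounds patch consistently at $\bar{\mu}=1$ and $\underline{\mu}=1$ and that no $\min/\max$ slot of \cref{tab0} is lost in the reduction. A lesser check is that for $M_{f_1},M_{f_5}$ the terms $\varrho(\bar{\gamma}\bar{\varphi},\underline{\gamma}\underline{\varphi})$ and $\lambda_{\max}(\Lambda-\Lambda^2)$ in the $\rho^2$ entry simplify to the stated closed forms under $\Gamma=2\Lambda-\Lambda^2$; and at the outset one should verify that the standing hypotheses $0<\bar{\mu}\le 2$, $0<\bar{\nu}\le 2$, $0<\bar{\mu}\bar{\nu}<2$ coincide with those of \cref{thm3}, so that \cref{thm3} is applicable verbatim to every $M_*$.
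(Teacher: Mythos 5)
Your proposal is correct and follows essentially the same route as the paper: the paper's proof likewise specializes \cref{thm3} by reading off $(W_S,Y_A,Z_A)$ for each $M_*$, reducing $\underline{\gamma},\bar{\gamma},\underline{\varphi},\bar{\varphi},\underline{\phi},\bar{\phi},\underline{h}_W,\bar{h}_W,\widehat{\xi}$ exactly as you describe (its relations \cref{str1,str2,str3,str4,str5,hatxi}), and then substituting into \cref{tab0}, with the same case split ($M_d,M_{f_2},M_{f_1},M_{f_5}$ in Case I; all three cases for $M_{ut},M_{lt},M_{f_3},M_{f_4}$, absorbing $\underline{\gamma}=\bar{\gamma}=1$ into $\bar{\gamma}\le 1$). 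No gaps worth noting.
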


\begin{proof}
	By \cref{thm3}, what we need to do is to obtain the exact descriptions of $\underline{\eta}_*$, $\bar{\eta}_*$ and $\rho_*$ for each $*= d$, $ ut$, $lt$, $f_1$, $f_2$, $f_3$, $f_4$, $f_5$ via \cref{treq}, \cref{xi1234} and \cref{tab0}.
	
	By \cref{Gamma,AGHD,eqzen2}, we have
	\begin{align}
		&\!\!\!
		\underline{\gamma}=\bar{\gamma}=0, \ \ 
		\lambda_{\max}\left((I-Y_AA)(I-Z_AA)M^{-1}_AA\right)= \bar{\mu},  
		&\!\!\!\!\!\!\!\!\text{if}&\  Y_A\!+\!Z_A\!=\!0, \label{str1}
		\!\!\!\!\!\!\!\!\!\!\!\\
		&\!\!\!\!\!\Big\{\!\!\!\begin{array}{c}
		\underline{\gamma}=\underline{\mu},\  \bar{\gamma}=\bar{\mu},\\
		\lambda_{\max}\!\left((I\!-\!Y_AA)(I\!-\!Z_AA)M^{-\!1}_A\!A\right)\!=\! \lambda_{\max}\!\left(M^{-\!1}_A\!A(I\!-\!M^{-\!1}_A\!A)\right)\!, 
		\end{array}
		&\!\!\!\!\!\!\!\!\text{if}&\  Y_A\!+\!Z_A\!=\!M^{-1}_A, \label{str2}
		\!\!\!\!\!\!\!\!\!\!\!\\	
		&\!\!\!\!\!\Big\{\!\!\!\begin{array}{c}
		\underline{\gamma}=\underline{\delta},\  					
		\bar{\gamma}=\bar{\delta}\le 1,  \\ 
		\lambda_{\max}\!\left((I\!-\!Y_AA)(I\!-\!Z_AA)M^{-\!1}_A\!A\right)\!=\!\lambda_{\max}\!\left((I\!-\!M^{-\!1}_A\!A)^2M^{-\!1}_A\!A\right)\!, 
    	\end{array}
		&\!\!\!\!\!\!\!\!\text{if}&\  Y_A\!+\!Z_A\!=\!2M^{-\!1}_A. \label{str3}
		\!\!\!\!\!\!\!\!\!\!\!
\end{align}

\begin{landscape}
	\begin{table}[tbp]
		\caption{Definitions of $\underline{\eta}_*$, $\bar{\eta}_*$, and $\rho_*$ for the different preconditioners in \cref{thm3}}
		\centering
		\resizebox{\linewidth}{57mm}{
				\begin{tabular}{|c|c|c|c|c|c|c|}
					\hline
					\multicolumn{2}{|c|}{}&&&&\multicolumn{2}{c|}{}\\
					\multicolumn{2}{|c|}{$M_*$}
					&$\xi_1, \xi_2, \xi_3, \xi_4$& $\underline{\eta}_*$ & $\bar{\eta}_*$ &\multicolumn{2}{c|}{$\rho_*^2$} \\
					\multicolumn{2}{|c|}{}&&&&\multicolumn{2}{c|}{}\\
					\hline
					\multicolumn{2}{|c|}{}&&&&\multicolumn{2}{c|}{}\\
					\multicolumn{2}{|c|}{$M_d$}&& 0& $\max\left\{\bar{\mu}, \bar{\tau}\right\}$ &\multicolumn{2}{c|}{$\bar{\omega}\!+\!\bar{\nu}\bar{\mu}$} \\
					\multicolumn{2}{|c|}{}&&&&\multicolumn{2}{c|}{}\\
					\cline{1-2}\cline{4-7}
					&&&&&\multicolumn{2}{c|}{}\\
					\multirow{9}*{\makecell[c]{$M_{ut}$\\ $M_{lt}$}}
					& $\bar{\mu}\le 1$ 
					&
					\multirow{30}*{\makecell[c]{
							$\xi_1=\bar{\nu}\!-\!\dfrac{\bar{\nu}}{\bar{\mu}}$\\[10mm]
							$\xi_2=\dfrac{\bar{\omega}\bar{\nu}}{\underline{\mu}}$\\[10mm]
							$\xi_3=(1\!-\!\underline{\mu})\xi_2$\\[10mm]
							$\xi_4=(1\!-\!\underline{\delta})\xi_2$\\[10mm]
					}}
					& $\min\left\{\underline{\mu}, \underline{\tau}, \underline{\mu}
					\underline{\nu}\right\}$ & $\max\{\bar{\mu}, \bar{\tau}, \bar{\mu}\bar{\nu}\}$
					& \multicolumn{2}{c|}{$\bar{\omega}\!+\!\bar{\nu}\lambda_{\max}(M^{-\!1}_A\!A(I\!-\!M^{-\!1}_A\!A))$}\\
					&&&&&\multicolumn{2}{c|}{}\\
					\cline{2-2}\cline{4-7}
					&&&&&\multicolumn{2}{c|}{}\\
					&$\underline{\mu}\ge1$  
					&&
					$\min\left\{\underline{\mu}g_1(\xi_1), \underline{\nu}g_1(\xi_1), \underline{\tau}\right\}$
					&$\max\left\{\bar{\mu}g_2(\xi_1),\bar{\nu}g_2(\xi_1), \bar{\tau}\right\}$
					&\multicolumn{2}{c|}{$\bar{\omega}$}\\
					&&&&&\multicolumn{2}{c|}{}\\
					\cline{2-2}\cline{4-7}
					&&&&&\multicolumn{2}{c|}{}\\
					&\!\!\!\!\!
					$\underline{\mu}<1<\bar{\mu}$	
					\!\!\!\!\!
					&& 
					$\min\left\{\underline{\mu}, \underline{\tau},g_1(\xi_1), \underline{\mu}\underline{\nu}g_1(\xi_1) \right\}$
					&$\max\left\{\bar{\tau}, \bar{\mu}g_2(\xi_1),\bar{\nu}g_2(\xi_1)\right\}$
					&\multicolumn{2}{c|}{$\bar{\omega}\!+\!\bar{\nu}\lambda_{\max}(M^{-\!1}_A\!A(I\!-\!M^{-\!1}_A\!A))$}\\
					&&&&&\multicolumn{2}{c|}{}\\
					\cline{1-2}\cline{4-7}	
					\multicolumn{2}{|c|}{}&&&&\multicolumn{2}{c|}{}\\
					\multicolumn{2}{|c|}{$M_{f_1}$}&&$\min\left\{\underline{\mu},  \underline{\tau}, \underline{\delta}\underline{\nu}\right\}$
					& $\max\{\bar{\mu}, \bar{\tau}, \bar{\delta}\bar{\nu}\}$
					&\multicolumn{2}{c|}{$\bar{\omega}\!+\!\bar{\nu}\lambda_{\max}\left((I\!-\!M^{-\!1}_A\!A)^2M^{-\!1}_A\!A\right)$}\\
					\multicolumn{2}{|c|}{}&&&&\multicolumn{2}{c|}{}\\
					\cline{1-2}\cline{4-7}
					\multicolumn{2}{|c|}{}&&&&\multicolumn{2}{c|}{}\\
					\multicolumn{2}{|c|}{$M_{f_2}$}
					&&0
					& 	$\max\{\bar{\mu}, \bar{h}(\bar{\nu})\!+\!\bar{\omega}(1\!-\!\underline{\nu})\} g_2(\xi_2)$
					&\multicolumn{2}{c|}{$\bar{\omega}\!+\!\bar{\nu}\bar{\mu}$}\\
					\multicolumn{2}{|c|}{}&&&&\multicolumn{2}{c|}{}\\
					\cline{1-2}\cline{4-7}
					&&&&&\multicolumn{2}{c|}{}\\
					\!\!\!\!\!
					\multirow{9}*{\makecell[c]{$M_{f_3}$\\$M_{f_4}$}}
					\!\!\!\!\!
					& $\bar{\mu}\le 1$  
					&& $\min\left\{\underline{\mu}, \underline{h}(\bar{\nu})\!+\!\underline{\omega}(1\!-\!\bar{\nu}), \frac{\underline{\mu}\underline{\nu}}{g_1(\xi_3)} \right\}g_1(\xi_3)$
					& 	$\max\left\{\bar{\mu}, \bar{h}(\bar{\nu})\!+\!\bar{\omega}(1\!-\!\underline{\nu}), \frac{\bar{\mu}\bar{\nu}}{g_2(\xi_3)}\right\}g_2(\xi_3)$
					&\multicolumn{2}{c|}{$\bar{\omega}\varrho(\bar{\mu}\bar{\nu}, \underline{\mu}\underline{\nu})\!+\!\bar{\nu}\lambda_{\max}(M^{-\!1}_A\!A(I\!-\!M^{-\!1}_A\!A))$}\\
					&&&&&\multicolumn{2}{c|}{}\\
					\cline{2-2}\cline{4-7}
					&&&&&\multicolumn{2}{c|}{}\\
					&$\underline{\mu}\ge1$  
					&& $\min\left\{\underline{\mu}g_1(\xi_1), \underline{\nu}g_1(\xi_1), 
						\underline{h}(\bar{\mu}\bar{\nu})\!+\!\underline{\omega}(1\!-\!\bar{\mu}\bar{\nu})\right\}$
					& $\max\left\{\bar{\mu}g_2(\xi_1),\bar{\nu}g_2(\xi_1),
						\bar{h}(\bar{\mu}\bar{\nu})\!+\!\bar{\omega}(1\!-\!\underline{\mu}\underline{\nu}) \right\}$
					&\multicolumn{2}{c|}{$\bar{\omega}\left(\bar{\nu}\bar{\mu}(\bar{\mu}\!-\!1)\!+\!\varrho(\bar{\mu}\bar{\nu}, \underline{\mu}\underline{\nu})\right)$}\\
					&&&&&\multicolumn{2}{c|}{}\\
					\cline{2-2}\cline{4-7}
					&&&&&\multicolumn{2}{c|}{}\\
					&\!\!\!\!\!
					$\underline{\mu}<1<\bar{\mu}$	
					\!\!\!\!\!
					&&\!\!\!\!\!
						$\min\left\{\underline{\mu},\underline{h}(\bar{\mu}\bar{\nu})\!+\!\underline{\omega}(1\!-\!\bar{\mu}\bar{\nu}),\frac{g_1(\xi_1)}{g_1(\xi_3)},\frac{\underline{\mu}\underline{\nu}g_1(\xi_1)}{g_1(\xi_3)} \right\} g_1(\xi_3)$
					\!\!\!\!\!
					&\!\!\!\!
						$\max\left\{1,\bar{h}(\bar{\mu}\bar{\nu})\!+\!\bar{\omega}(1\!-\!\underline{\nu}),\frac{\bar{\mu}g_2(\xi_1)}{g_2(\xi_3)},\frac{\bar{\nu}g_2(\xi_1)}{g_2(\xi_3)} \right\}g_2(\xi_3)$
					\!\!\!\!\!
					&\multicolumn{2}{c|}{$\bar{\omega}\left(\bar{\nu}\bar{\mu}(\bar{\mu}\!-\!1)\!+\!\varrho(\bar{\mu}\bar{\nu}, \underline{\mu}\underline{\nu})\right)\!+\!\bar{\nu}\lambda_{\max}(M^{-\!1}_A\!A(I\!-\!M^{-\!1}_A\!A))$}\\
					&&&&&\multicolumn{2}{c|}{}\\
					\cline{1-2}\cline{4-7}
					\multicolumn{2}{|c|}{}&&&&\multicolumn{2}{c|}{}\\
					\multicolumn{2}{|c|}{$M_{f_5}$}
					&&$\min\left\{\underline{\mu}, \underline{h}(\bar{\nu})\!+\!\underline{\omega}(1\!-\!\bar{\nu}),\frac{\underline{\delta}\underline{\nu}}{g_1(\xi_4)} \right\}g_1(\xi_4)$
					& $\max\left\{\bar{\mu}, \left(\bar{h}(\bar{\nu})\!+\!\bar{\omega}(1\!-\!\underline{\nu})\right), \frac{\bar{\delta}\bar{\nu}}{g_2(\xi_4)}\right\}g_2(\xi_4)$
					&\multicolumn{2}{c|}{$\bar{\omega}\varrho(\bar{\delta}\bar{\nu}, \underline{\delta}\underline{\nu})\!+\!\bar{\nu}\lambda_{\max}\left((I\!-\!M^{-\!1}_A\!A)^2M^{-\!1}_A\!A\right)$}\\
					\multicolumn{2}{|c|}{}&&&&\multicolumn{2}{c|}{}\\
					\hline
			\end{tabular}}\label{tab1}
			\leftline{\scriptsize \  Here, functions $g_1$, $g_2$, $\varrho$, $\underline{h}$ and $\bar{h}$ are defined in \cref{r10,eq19,eqzen2}, and $\bar{\mu}$, $\underline{\mu}$, $\bar{\nu}$, $\underline{\nu}$, $\bar{\omega}$, $\underline{\omega}$, $\bar{\tau}$, $\underline{\tau}$, $\underline{\delta}$ and $\bar{\delta}$ are defined in \cref{eq15,eqzen2}, respectively.}
		\end{table}
	\end{landscape}

By \cref{eq07},
\begin{equation*}
	\left\{
	\begin{array}{lll}
		W_SS=0, & M^{-1}_SCW_SC^T=0,  &
		\text{if} \ W_S=0,\\
		W_SS=\widehat{S}^{-1}S, & M^{-1}_SCW_SC^T=M^{-1}_SC\widehat{S}^{-1}C^T, &
		\text{if}\ W_S=\widehat{S}^{-1},
	\end{array}\right.
\end{equation*}	
so, we have, according to \cref{AGHD},  
	\begin{align}
	\underline{\varphi}=\bar{\varphi}=\underline{\phi}=\bar{\phi}=\widehat{\xi}=0,&  
	\hspace{0.5em}
	\underline{h}_{W}=\underline{\tau},
	\hspace{0.5em}
	\bar{h}_{W}=\bar{\tau},
	\hspace{1em}
	&\text{if}& \ W_S=0,\label{str4}\\
	\underline{\varphi}=\underline{\nu},	
	\hspace{0.5em}
	\bar{\varphi}=\bar{\nu},
	\hspace{0.5em}
	\underline{\phi}=\underline{\omega},
	\hspace{0.5em} 
	\bar{\phi}=\bar{\omega},&
	\hspace{0.5em} 
	\underline{h}_{W}=\underline{h},
	\hspace{0.5em}
	\bar{h}_{W}=\bar{h},
	\hspace{1em}
	&\text{if}&\  W_S=\widehat{S}^{-1}, \label{str5}
	\end{align}
and 
\begin{equation}\label{hatxi}
	\widehat{\xi}=\begin{cases}
		\xi_2	&\text{if}\  Y_A+Z_A=0,\\
		\xi_3	&\text{if}\  Y_A+Z_A=M^{-1}_A,\\
		\xi_4	&\text{if}\  Y_A+Z_A=2M^{-1}_A,
	\end{cases}	
	\hspace{3em}
	\text{if} \ W_S=\widehat{S}^{-1}.
\end{equation}
Here $\xi_2$, $\xi_3$ and $\xi_4$ are defined in the third column of \cref{tab1}.

For preconditioners $M_d$ and $M_{f_2}$, i.e., $*=d, f_2$, based on their formats in \cref{eq4,eq4.7}, we have $Y_A+Z_A=0$, and that  $W_S=0$ for the preconditioner $M_d$ and $W_S=\widehat{S}^{-1}$ for the preconditioner $M_{f_2}$. Then according to the first case in \cref{tab0}, $\underline{\eta}_d$, $\bar{\eta}_d$ and $\rho_d$ in \cref{tab1} follows from \cref{str1,str4}, and $\underline{\eta}_{f_2}$, $\bar{\eta}_{f_2}$ and $\rho_{f_2}$ from \cref{str1,str5,hatxi}, respectively.

For preconditioners $M_{f_1}$ and $M_{f_5}$, i.e., $*=f_1, f_5$, based on their constructions in \cref{eq4.4,eq4.3}, we have $Y_A+Z_A=2M^{-1}_A$, and that $W_S=0$ for the preconditioner $M_{f_1}$ and $W_S=\widehat{S}^{-1}$ for the preconditioner $M_{f_5}$. 
As similarly as what we do for $M_d$ and $M_{f_2}$, we have, still by the first case in \cref{tab0}, that 
$\underline{\eta}_{f_1}$, $\bar{\eta}_{f_1}$ and $\rho_{f_1}$ in \cref{tab1} follows from \cref{str3,str4}, and $\underline{\eta}_{f_5}$, $\bar{\eta}_{f_5}$ and $\rho_{f_5}$ from \cref{str3,str5,hatxi}, respectively.

For preconditioners $M_{ut}, M_{lt}, M_{f_3}$ and $M_{f_4}$, i.e., $*=ut, lt, f_3, f_4$, thanks to their definitions in \cref{eq4.1,eq4.2,eq4.5,eq4.6}, we have $Y_A+Z_A=M^{-1}_A$, and that $W_S=0$ for  preconditioners $M_{ut}$, $M_{lt}$ and $W_S=\widehat{S}^{-1}$ for  preconditioners $M_{f_3}$, $M_{f_4}$. 

Since $\Gamma=\Lambda$ 
holds true at this time by \cref{Gamma}, inevitable is one of the following three cases $\bar{\gamma}\le 1$, $\underline{\gamma}\ge 1$ and $\underline{\gamma}<1<\bar{\gamma}$. Here, we owe the extreme case of $\underline{\gamma}=\bar{\gamma}=1$ to $\bar{\gamma}\le 1$.

Then, based on all three cases in \cref{tab0}, according to \cref{str2,str4},  we can obtain the expressions of $\underline{\eta}_{ut}$, $\bar{\eta}_{ut}$ and $\rho_{ut}$  
and $\underline{\eta}_{lt}$, $\bar{\eta}_{lt}$,  $\rho_{lt}$ in \cref{tab1}, respectively; 
according to \cref{str2,str5,hatxi},  the expressions of $\underline{\eta}_{f_3}$, $\bar{\eta}_{f_3}$ and $\rho_{f_3}$  
and $\underline{\eta}_{f_4}$, $\bar{\eta}_{f_4}$, $\rho_{f_4}$  in \cref{tab1}, respectively. Clearly,  $\underline{\eta}_{ut}=\underline{\eta}_{lt}$, $\bar{\eta}_{ut}=\bar{\eta}_{lt}$, $\rho_{ut}=\rho_{lt}$  
and $\underline{\eta}_{f_3}=\underline{\eta}_{f_4}$, $\bar{\eta}_{f_3}=\bar{\eta}_{f_4}$, $\rho_{f_3}=\rho_{f_4}$. The proof is completed.
\end{proof}

Especially, when these eight inexact preconditioners become exact ones, we can easily get the following corollary.

\begin{corollary}\label{cor1}
		Let $K$ be the coefficient matrix defined in \cref{eq1.0}.	
	Assume that $M_A\!=\!A$, $\widehat{S}\!=\!S$, $\widehat{M}_S\!=\!M_S$ and $\bar{\omega}, \underline{\tau}$ are defined in \cref{eq15}.  
	Then for each  $*= d$, $ ut$, $lt$, $f_1$, $f_2$, $f_3$, $f_4$, $f_5$, any eigenvalue $\lambda_*$ of $M^{-1}_*K$, with $M_*$ defined in \cref{eq4,eq4.1,eq4.2,eq4.3,eq4.4,eq4.5,eq4.6,eq4.7}, satisfies
	\begin{equation*}
		\underline{\eta}_* \le Re(\lambda_*) \le \bar{\eta}_*,
		\hspace{1em}
		|Im(\lambda_*)| \le \rho_*,
	\end{equation*}
	where $\underline{\eta}_*$, $\bar{\eta}_*$ and $\rho_*$ are described in \cref{tabzen}.
\end{corollary}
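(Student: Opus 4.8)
The corollary is the specialization of \cref{thm4} to the exact choice $M_A=A$, $\widehat{S}=S$, $\widehat{M}_S=M_S$, so the plan is simply to substitute this data into the quantities of \cref{eq15,eqzen2} and collapse every entry of \cref{tab1}. First I would record the degenerate spectral data this choice produces: $M_A^{-1}A=I$ yields $\underline{\mu}=\bar{\mu}=1$; $\widehat{S}^{-1}S=I$ yields $\underline{\nu}=\bar{\nu}=1$; and $\widehat{M}_S^{-1}(D+C\widehat{S}^{-1}C^T)=M_S^{-1}M_S=I$ yields $\underline{\vartheta}=\bar{\vartheta}=1$, whence $\underline{\delta}=\bar{\delta}=1$ by \cref{eqzen2}. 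I would also note that, since $M_S-D=CS^{-1}C^T$ is SPS, the eigenvalues of $M_S^{-1}D$ lie in $[0,1]$, so that $\max\{1,\bar{\tau}\}=1$ and $\min\{1,\underline{\tau}\}=\underline{\tau}$; this is what leaves only $\bar{\omega}$ and $\underline{\tau}$ in the final bounds.

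Next I would verify the hypotheses of \cref{thm4} hold here --- $0<\bar{\mu}=1\le 2$, $0<\bar{\nu}=1\le 2$, $0<\bar{\mu}\bar{\nu}=1<2$ --- so that \cref{thm4} applies, and then evaluate the auxiliary quantities of \cref{tab1}. We get $\xi_1=\bar{\nu}-\bar{\nu}/\bar{\mu}=0$, $\xi_2=\bar{\omega}\bar{\nu}/\underline{\mu}=\bar{\omega}$, $\xi_3=(1-\underline{\mu})\xi_2=0$, $\xi_4=(1-\underline{\delta})\xi_2=0$; since $g_1(0)=g_2(0)=1$ by \cref{r10}, each of $g_1(\xi_1),g_2(\xi_1),g_1(\xi_3),g_2(\xi_3),g_1(\xi_4),g_2(\xi_4)$ equals $1$. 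Moreover $M_A^{-1}A(I-M_A^{-1}A)=0$ and $(I-M_A^{-1}A)^2M_A^{-1}A=0$, so the corresponding $\lambda_{\max}$ terms are zero; $\varrho(\bar{\mu}\bar{\nu},\underline{\mu}\underline{\nu})=\varrho(\bar{\delta}\bar{\nu},\underline{\delta}\underline{\nu})=\varrho(1,1)=0$ by \cref{eq19}; and the piecewise functions $\underline{h},\bar{h}$ of \cref{eqzen2}, evaluated at the breakpoint $t=\bar{\nu}=1$ (equivalently $t=\bar{\mu}\bar{\nu}=1$) on their first branch, give $\underline{h}(1)=\underline{\vartheta}=1$ and $\bar{h}(1)=\bar{\vartheta}=1$, so that $\underline{h}(\bar{\nu})+\underline{\omega}(1-\bar{\nu})=1$, $\bar{h}(\bar{\nu})+\bar{\omega}(1-\underline{\nu})=1$, and likewise with $\bar{\mu}\bar{\nu}$ in place of $\bar{\nu}$.

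The last step is to read off each row of \cref{tab1} with these reductions. For $M_d$ and $M_{f_2}$ (where $Y_A+Z_A=0$) and for $M_{f_1}$ and $M_{f_5}$ (where $Y_A+Z_A=2M_A^{-1}$, hence $\Gamma=2\Lambda-\Lambda^2=I$) the substitution is direct. For $M_{ut}$, $M_{lt}$, $M_{f_3}$, $M_{f_4}$ one has $\Gamma=\Lambda=I$, i.e.\ $\underline{\gamma}=\bar{\gamma}=1$, which sits exactly on the common boundary of the three case-rows in \cref{tab1}: the $\bar{\mu}\le 1$ row and the $\underline{\mu}\ge 1$ row both apply, and I would check they give identical values --- which is automatic here since all $g$-factors equal $1$ and all $\lambda_{\max}$ and $\varrho$ terms vanish, so the case distinction collapses. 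Collecting the resulting numbers into \cref{tabzen} finishes the proof. The only point needing any attention is this boundary evaluation of the piecewise functions together with the overlapping case-rows; the rest is the arithmetic of inserting $1$'s and $0$'s into \cref{tab1}.
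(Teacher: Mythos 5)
Your proposal is correct and follows the same route as the paper: specialize \cref{thm4} by noting $\underline{\mu}=\bar{\mu}=\underline{\nu}=\bar{\nu}=\underline{\vartheta}=\bar{\vartheta}=\underline{\delta}=\bar{\delta}=1$ and read off \cref{tab1}, which is exactly the paper's (much terser) argument. Your extra observation that $D\preceq M_S$ forces $\bar{\tau}\le 1$, so that $\max\{1,\bar{\tau}\}=1$ and $\min\{1,\underline{\tau}\}=\underline{\tau}$, is a detail the paper leaves implicit but which is indeed needed to arrive at the entries of \cref{tabzen}.
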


\begin{table}[tbhp]
	\caption{Definitions of $\underline{\eta}_*$, $\bar{\eta}_*$, and $\rho_*$ for the different preconditioners in exact form}
	\centering
			\begin{tabular}{|c|c|c|c|c|c|}
				\hline
				\multicolumn{2}{|c|}{$M_*$}
				& $\underline{\eta}_*$ & $\bar{\eta}_*$ &\multicolumn{2}{c|}{$\rho_*^2$} \\
				\hline
				\multicolumn{2}{|c|}{$M_d$}& 0& 1 &\multicolumn{2}{c|}{$\bar{\omega}+1$} \\
				\hline
				\multicolumn{2}{|c|}{$M_{ut}$, $M_{lt}$, $M_{f_1}$}& $\underline{\tau}$& 1 &\multicolumn{2}{c|}{$\bar{\omega}$} \\
				\hline
				\multicolumn{2}{|c|}{$M_{f_2}$}
				&0
				& $1+\frac{1}{2}\bar{\omega}+\sqrt{\frac{1}{4}\bar{\omega}^2+\bar{\omega}}$&\multicolumn{2}{c|}{$\bar{\omega}+1$}\\
				\hline
				\multicolumn{2}{|c|}{$M_{f_3}$, $M_{f_4}$, $M_{f_5}$}
				&1
				&1&\multicolumn{2}{c|}{0}\\
				\hline
			\end{tabular}\label{tabzen}
		\end{table}

\begin{proof}	
		Since $M_A=A$, $\widehat{S}=S$, $\widehat{M}_S=M_S$, i.e., the preconditioners proposed in \cref{eq4,eq4.1,eq4.2,eq4.3,eq4.4,eq4.5,eq4.6,eq4.7} are exact ones, by \cref{eq15,eqzen2}, we have $\underline{\mu}=\bar{\mu}=\underline{\nu}=\bar{\nu}=\underline{\vartheta}=\bar{\vartheta}=\underline{\delta}=\bar{\delta}=1$. Then,  the upper and lower bounds of the real and imaginary parts of eigenvalues for the exact preconditioned matrices $M^{-1}_*K$, denoted by  $\underline{\eta}_*$, $\bar{\eta}_*$, and $\rho_*$,  can be obtained easily from \cref{tabzen} in \cref{thm4} for each  $*= d$, $ ut$, $lt$, $f_1$, $f_2$, $f_3$, $f_4$, $f_5$.
\end{proof}

\begin{remark}
	In the case of $D=0$, the preconditioned matrix $M^{-1}_dK$ is equal to $P^{-1}_{ibd}\check{K}$ in \cite{candes1}, where $P_{ibd}$ is the inexact block diagonal preconditioner, and the matrix $\check{K}$ becomes $K$ if we multiply the second row block by $-1$. 
	\cref{huang} shows us that the estimated lower and upper bounds of the real parts of eigenvalues of the preconditioned matrix obtained in \cite{candes1} and \cref{thm4} are the same for the case of $p=32$ in \cref{ex:D=01}, and that the estimated bounds of the imaginary parts of eigenvalues of the preconditioned matrix obtained in \cref{thm4} are better.
	
	\begin{figure}[H] 
		\centering
		\subfigure[$M^{-1}_{d}K$]{\includegraphics[width=0.3\textwidth]{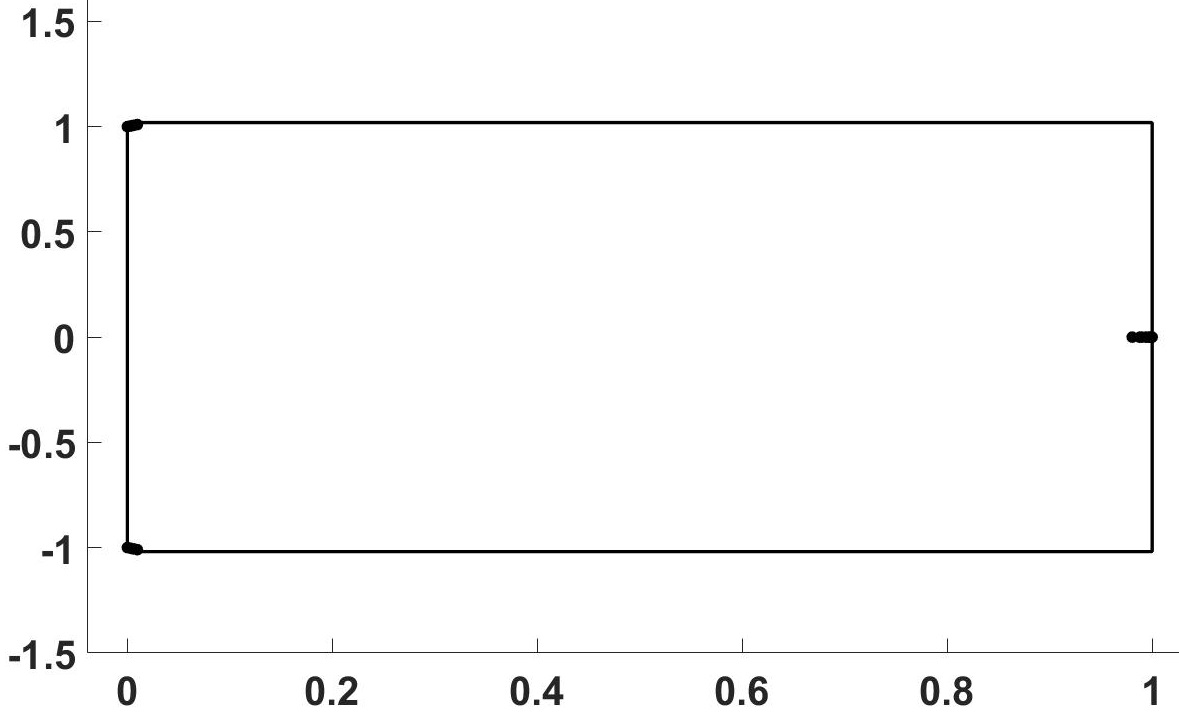}}
		\hspace{2em}
		\subfigure[$P^{-1}_{ibd}\check{K}$]{\includegraphics[width=0.3\textwidth]{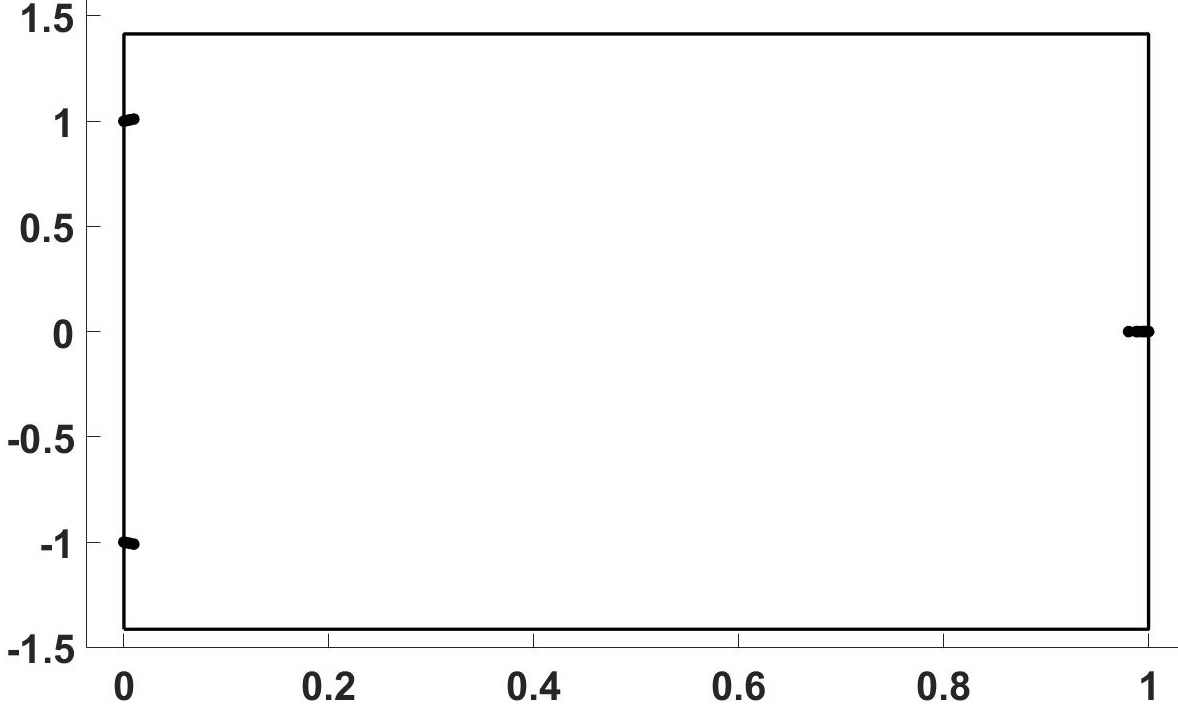}}
		\caption{\footnotesize Estimated ranges of eigenvalues of the preconditioned matrices $M^{-1}_{d}K=P^{-1}_{ibd}\check{K}$ for the case of $p=32$ in \cref{ex:D=01}. Here each ``exact" eigenvalue (or each cluster of eigenvalues) is marked by dot `·', and the estimated bounds are marked by rectangles.}
		\label{huang}
	\end{figure}
\end{remark}

\begin{remark}
	\cref{figapp} draws an example of the exact eigenvalues and the estimated eigenvalue bounds of the matrix $K$ preconditioned by preconditioners $M_{d}$, $M_{ut}$, $M_{lt}$, $M_{f_1}$ and $M_{f_2}$ all in the exact form. Here, the estimated eigenvalue bounds are in \cref{tabzen} of \cref{cor1}. 
	    
		\begin{figure}[tbhp] 
			\centering
			\subfigure[$M^{-1}_{d}K$]{\includegraphics[width=0.18\textwidth]{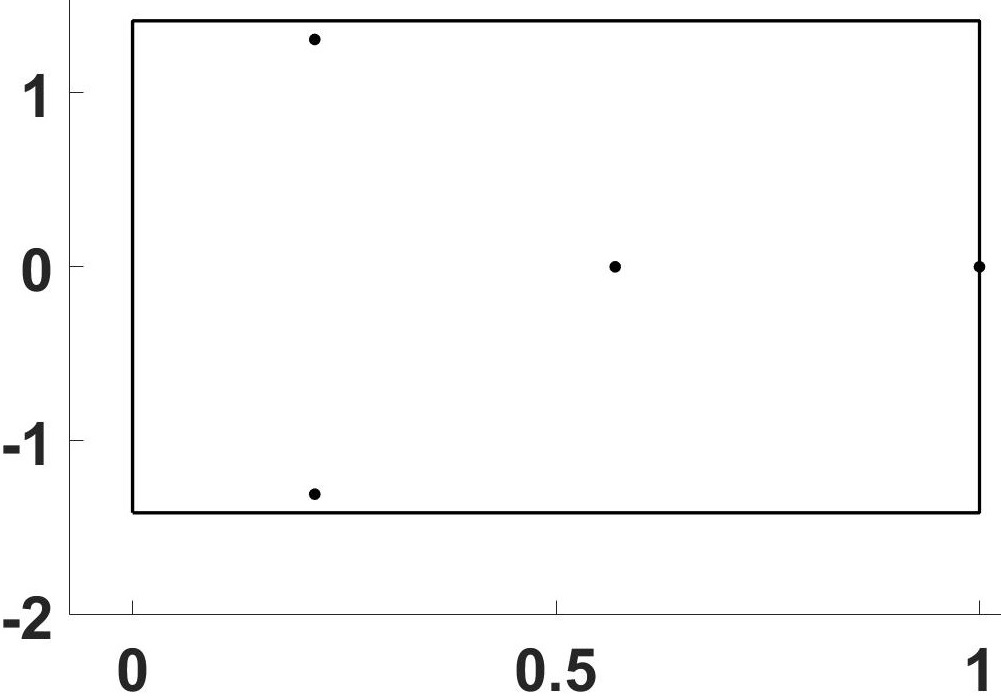}}
			\subfigure[$M^{-1}_{ut}K$]{\includegraphics[width=0.2\textwidth]{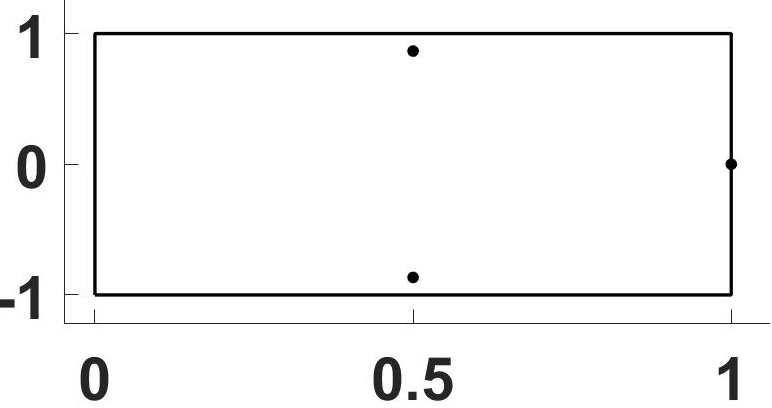}}
			\subfigure[$M^{-1}_{lt}K$]{\includegraphics[width=0.2\textwidth]{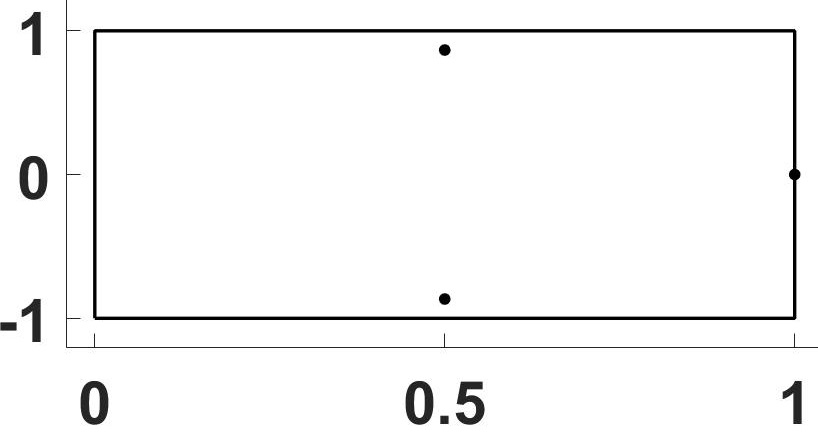}}
			\subfigure[$M^{-1}_{f_1}K$]{\includegraphics[width=0.2\textwidth]{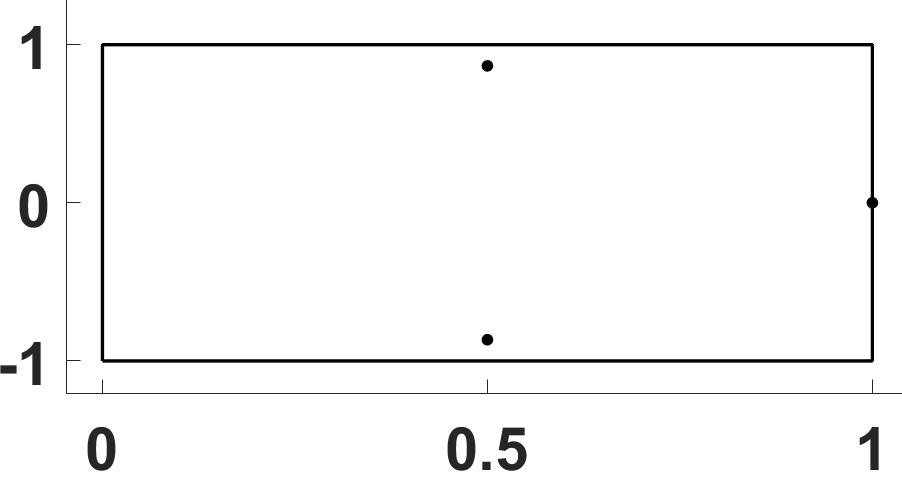}}
			\subfigure[$M^{-1}_{f_2}K$]{\includegraphics[width=0.18\textwidth]{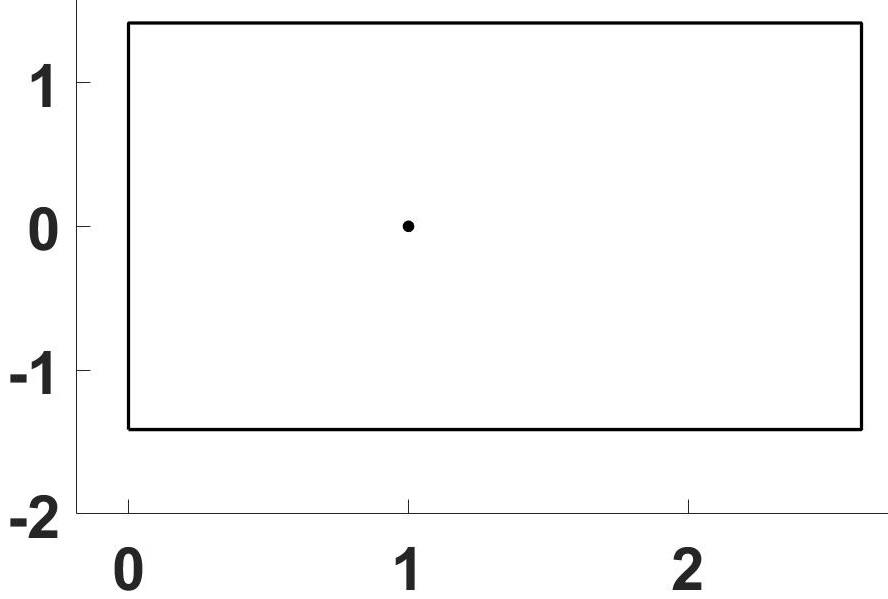}}
			\caption{Estimated ranges of eigenvalues of the matrix $K$ preconditioned by different exact preconditioners $M_{d}$, $M_{ut}$, $M_{lt}$, $M_{f_1}$ and $M_{f_2}$ for the case of $p=32$ in \cref{ex:D=01}. 
			Here each ``exact" eigenvalue (or each cluster of eigenvalues) is marked by dot `·', and the estimated bounds are marked by rectangles.}
			\label{figapp}
		\end{figure}
  
    Since $\lambda=1$ is the unique eigenvalue of the preconditioned matrices $M^{-1}_{f_3}K$, $M^{-1}_{f_4}K$ and $M^{-1}_{f_5}K$, and since the estimated eigenvalue bounds, obtained in \cref{tabzen}, are the exact ones, we did not draw them in \cref{figapp}. 
  
In \cref{figapp}, rectangles mean the lower and upper bounds of the real and imaginary parts of the eigenvalues of the exact  preconditioned matrix $M^{-1}_*K$ for each $*=d, ut, lt, f_1, f_2$, and dots mean the ``exact" eigenvalues or ``cluster" of eigenvalues. We can find that the estimated upper bounds of the real parts of the eigenvalues of preconditioned matrices are almost sharp except $M^{-1}_{f_2}K$.
\end{remark}


\section{Numerical experiments}
\label{sec:num}
In this section, we present a list of numerical tests aiming at illustrating the efficiency of the proposed preconditioners \cref{eq4,eq4.1,eq4.2,eq4.3,eq4.4,eq4.5,eq4.6,eq4.7}. We focus on solving two kinds of systems of linear equations, one with $D=0$ and the other with $D \neq 0$, by preconditioned-GMRES. 
The system of linear equations from five different test problems, which are used in \cite{candes1,xie2020,G2003,IFISS,benzi2011,benzi20112,Rees2010}.

To compare to the preconditioners proposed in \cref{sec:eig}, we gather eleven preconditioners used to solve systems of linear equations in the form of \cref{eq1.0} or \cref{eq1.1}. 
For convenience, we abbreviate all corresponding preconditioned-GMRES as follows:
\begin{table}[H]
	\centering
	\resizebox{\textwidth}{20mm}{
	\begin{tabular}{rl}
		$P_{ebd}$-GMRES: \!\!\!\!\!\!& \multirow{1}{15cm}{GMRES preconditioned by $P_1$, the exact block diagonal preconditioner in \cite{candes1},}\\
		$P_{ibd}$-GMRES: \!\!\!\!\!\!& \multirow{1}{15cm}{GMRES preconditioned by $P_2$, the inexact block diagonal preconditioner in \cite{candes1},}\\
		$P_{1}$-GMRES: \!\!\!\!\!\!& \multirow{1}{15cm}{GMRES preconditioned by the block preconditioner $P_1$ in \cite{xie2020},}\\
		$P_{2}$-GMRES: \!\!\!\!\!\!& \multirow{1}{15cm}{GMRES preconditioned by the block preconditioner $P_2$ in \cite{xie2020},}\\
		$P_{3}$-GMRES: \!\!\!\!\!\!& \multirow{1}{15cm}{GMRES preconditioned by the block preconditioner $P_3$ in \cite{xie2020},}\\
		$P_{ss}$-GMRES: \!\!\!\!\!\!& \multirow{1}{15cm}{GMRES preconditioned by the shift-splitting preconditioner in \cite{cao2020},}\\
		$P_{rss}$-GMRES: \!\!\!\!\!\!& \multirow{1}{15cm}{GMRES preconditioned by the relaxed shift-splitting preconditioner in \cite{cao2020},}\\
		$P_{ds}$-GMRES: \!\!\!\!\!\!& \multirow{1}{15cm}{GMRES preconditioned by the dimensional split preconditioner in \cite{benzi2011},}\\
		$P_{rdf}$-GMRES: \!\!\!\!\!\!& \multirow{1}{15cm}{GMRES preconditioned by the relaxed dimensional factorization preconditioner in \cite{benzi20112},}\\
		$P_{sdf}$-GMRES: \!\!\!\!\!\!& \multirow{1}{15cm}{GMRES preconditioned by the stabilized dimensional factorization preconditioner in \cite{Grigori2019},}\\
		$P_{mal}$-GMRES: \!\!\!\!\!\!& \multirow{1}{15cm}{GMRES preconditioned by the modified augmented Lagrangian preconditioner in \cite{benzi20113},}\\
		$M_*$-GMRES: \!\!\!\!\!\!& \multirow{1}{15cm}{GMRES preconditioned by the $M_*$ in \cref{eq4,eq4.1,eq4.2,eq4.3,eq4.4,eq4.5,eq4.6,eq4.7}.}
	\end{tabular}}
\end{table}
In the above list, the first seven preconditioned-GMRES are constructed in \cite{candes1,cao2020,xie2020} to solve the system of linear equations in the form of \cref{eq1.0} with $D=0$, the next four in \cite{benzi2011,benzi20112,Grigori2019,benzi20113} to solve the system of linear equations in the form of \cref{eq1.1} with $D\neq 0$, and those corresponding to the last one are new.

In the case of $D= 0$, the new preconditioned-GMRES proposed in this paper and the first seven ones in the list are applied on the system in the form of \cref{eq1.0}. In the case of $D\neq 0$, the next four preconditioned-GMRES in the list are applied on the system in the form of \cref{eq1.1}, as in \cite{benzi2011,benzi20112,Grigori2019,benzi20113}, while the new ones proposed are applied on the equivalent system in the form of \cref{eq1.0}.

All numerical experiments are performed in MATLAB (version R2018a) and use the function \textbf{gmres} with the corresponding preconditioners on a personal computer, which has a 1.60-2.11GHz central processor (Intel(R) Core(TM) i5-10210u CPU) and 12G memory.

In the experiments, the initial point is set to be $x^0=0$, and
 IT, the number of iteration steps, and CPU, the CPU time cost in seconds, are recorded. We average the CPU for ten times and terminate the iteration once IT, the number of the iteration steps, exceeds 1000 or $RES\le 10^{-6}$, where
\begin{equation*}
RES=\frac{\parallel b-Kx^{k} \parallel_2}{\parallel b-Kx^{0} \parallel_2}.
\end{equation*}

\subsection{The case of $D=0$} 
In this case, we’ll compare $M_*$-GMRES to the first seven preconditioned-GMRES in the above list for two modified systems of linear equations arising from two real problems.

In the numerical performance, all preconditioned-GMRES are used to solve the systems of linear equations in the form of \cref{eq1.0}. 
All the preconditioners, denoted by $M_*$ in \cref{eq4,eq4.1,eq4.2,eq4.3,eq4.4,eq4.5,eq4.6,eq4.7} and $P_{ibd}$ in \cite{candes1} are inexact, and others are all exact. 
The parameter $\alpha$ needed in $P_{ss}$ and $P_{rss}$ are chosen as $\alpha=0.01$, the same as that in \cite{cao2020}.

\begin{example}\label{ex:D=01}(\cite{candes1,cao2020,xie2020})
	\textbf{A modified system interfer with the Stokes problem.} It is a modification of the system of linear equations arises in the Stokes problem \cite{bai2004}. 
	
	Assume $A, B, C, D$ are defined by
	\begin{equation*}
		A\! = \!\begin{pmatrix}
			I_p\!\otimes\! T_p\!+\!T_p\!\otimes\! I_p &\!\! 0\\
			0 &\!\! I_p\!\otimes\! T_p\!+\!T_p\!\otimes\! I_p
		\end{pmatrix}\!,
	\hspace{0.7em}
	B\! = \!\begin{pmatrix}
		I_p\!\otimes\! F_p &\!\! F_p\!\otimes\! I_p
		\end{pmatrix},
	\hspace{0.7em}
	C\!=\!E_p \otimes F_p,
	\hspace{0.7em}
	D\!=\!0 
	\end{equation*}
	with 
	\begin{equation*}
		T_p\!=\!\frac{1}{h^2}\text{tridiag}(-1,2,-1),
		\hspace{0.8em}
		F_p\! =\! \frac{1}{h}\text{tridiag}(0,1,-1),
		\hspace{0.8em}
		E_p\!=\!\text{diag}(1,p+1,\cdots, p^2-p+1),
	\end{equation*}
	where $p$ is a positive integer, $h=\frac{1}{p+1}$ is the discretization mesh size, and $\otimes$ is the Kronecker product symbol.
	
	It is easy to know that the matrix $A$ is SPD, that matrices $B$ and $C$ have full row rank, and that $n=2p^2$, $m=p^2$, $l=p^2$.
	
	For inexact preconditioners $M_*$ in \cref{eq4,eq4.1,eq4.2,eq4.3,eq4.4,eq4.5,eq4.6,eq4.7} and $P_{ibd}$ in \cite{candes1}, the matrices $M_A$, $\widehat{S}$ and $\widehat{M}_S$ are chosen as
	\begin{equation*}
		M_A = A,
		\hspace{1.5em}
		\widehat{S} = BB^T,
		\hspace{1.5em}
		\widehat{M}_S=C\widehat{S}^{-1}C^T.
	\end{equation*}

\begin{table}[H] 
	\caption{IT and CPU in the form of ``IT(CPU)" for different preconditioned-GMRES.}\label{tab0.0}
	\centering
	\small
	\begin{tabular}{r|lll}
		\hline
		\multirow{1}*{Method} &\multicolumn{1}{c}{$p=32$}&\multicolumn{1}{c}{$p=64$}&\multicolumn{1}{c}{$p=96$} \\		
		\hline
		$P_{ebd}$-GMRES&  4(1.157) & 4(75.954) & 4(822.431)\\
		$P_{ibd}$-GMRES&  8(1.106) & 8(27.687) & 8(217.129) \\
		$P_{ss}$-GMRES&  3(0.254) & 3(2.565) & 3(20.246) \\
		$P_{rss}$-GMRES&  3(0.263) & 3(2.502) & 3(20.481) \\ 
		$P_{1}$-GMRES&  3(1.028)  & 3(35.436) & 3(392.035) \\
		$P_{2}$-GMRES&  3(1.010)  & 3(35.623) & 3(392.891) \\
		$P_{3}$-GMRES&  2(0.949)  & 2(27.454)  & 2(606.317) \\
		\hline
		$M_d$-GMRES& 9(1.198)  & 8(27.497)  & 8(214.460) \\
		$M_{ut}$-GMRES& 7(0.988) & 7(24.995) & 7(195.506)\\ 
		$M_{lt}$-GMRES &  7(1.027) & 7(24.791) & 7(196.801)\\ 
		$M_{f_1}$-GMRES&  7(2.814) & 7(79.275) & 7(831.203)\\ 
		$M_{f_2}$-GMRES&  3(0.024) & 3(0.126) & 3(0.349)\\
		$M_{f_3}$-GMRES & 2(0.018) & 2(0.067) & 2(0.172)\\
		$M_{f_4}$-GMRES&  2(0.019) & 2(0.070) & 2(0.207)\\ 
		$M_{f_5}$-GMRES&  2(0.134) & 2(2.417) & 2(13.055)\\
		\hline
	\end{tabular}\\
\end{table}

\cref{tab0.0} lists IT and CPU in the form of “IT(CPU)” for each preconditioned-GMRES for solving systems with $p=32, 64, 96$. From \cref{tab0.0}, we can find that $M_{f_2}$-GMRES, $M_{f_3}$-GMRES,  $M_{f_4}$-GMRES and $M_{f_5}$-GMRES require less numbers of iteration steps or less CPU times than all the other tested preconditioned-GMRES. So, $M_{f_3}$-GMRES seems to be much better among them since both the numbers of iteration steps and the CPU times required are less. In other words, $M_{f_3}$-GMRES is best among these fifteen preconditioned-GMRES tested and listed in \cref{tab0.0}. 

We can also find that, for $M_{ut}$-GMRES and $M_{lt}$-GMRES, the numbers of iteration steps and the CPU time are both better than that of $P_{ibd}$-GMRES, and that the CPU time is  better than that of $P_{ebd}$-GMRES, $P_{1}$-GMRES, $P_{2}$-GMRES and $P_{3}$-GMRES. $M_{d}$-GMRES needs less CPU time than $P_{ebd}$-GMRES, $P_{ibd}$-GMRES, $P_{1}$-GMRES, $P_{2}$-GMRES and $P_{3}$-GMRES as $p$ becomes greater.
\end{example}

\begin{example}\label{ex:D=0}
(\cite{candes1,xie2020})
\textbf{A modified system interfer with the optimization problem.} It is a modification of the system of linear equations arises in computing the descent directions in the Newton steps involved in the modified primal–dual interior point method used to solve the nonsmooth and nonconvex minimization problems from restorations of piecewise constant images \cite{nikolova2008,bai2009}. 

Suppose $p$ is a positive integer, $\widetilde{p}=p^2$ and $\widehat{p}=p(p+1)$. Assume $A, B, C, D$ are defined by 
\begin{equation*}\label{1a}
	A=Diag(2W^TW\!+\!I_{\widehat{p}}, D_1, D_2),
	\hspace{1em}
		B=(E, -I_{2
		\widetilde{p}}, -I_{2
		\widetilde{p}}),
	\hspace{1em}
	C = E^T,
	\hspace{1em}
	D=0 
\end{equation*}
with 
\vspace{-0.4cm}
\begin{equation*}
	W=(w_{ij})_{\widehat{p}},
	\hspace{0.8em}
	D_1=diag(d^{(1)}_j)_{2\widetilde{p}},
	\hspace{0.8em}
	D_2=diag(d^{(2)}_j)_{2\widetilde{p}},
	\hspace{0.8em}
	E=\begin{pmatrix}
	\widehat{E}\otimes I_p\\
	I_p \otimes \widehat{E}
	\end{pmatrix}, 
\end{equation*}
where $\otimes$ is the Kronecker product symbol and 
\begin{equation*}
\ \ 	\begin{array}{l}
		w_{ij}=e^{-2((i/3)^2+(j/3)^2)}, \hspace{0.25em} 
		\text{for}\ 1\le i,j\le \widehat{p},\\
		d^{(1)}_j
		=\begin{cases}
			1, & \text{for}\ 1\le j\le \widetilde{p},\\
			10^{-5}(j-\widetilde{p})^2, & \text{for} \ \widetilde{p}+1\le j \le 2\widetilde{p},
		\end{cases}\\
		d^{(2)}_j=10^{-5}(j+\widetilde{p})^2, \hspace{1.65em}
		\text{for} \ 1\le j\le 2\widetilde{p},
	\end{array}
\hspace{0.5em}	\widehat{E}=\!\begin{pmatrix}
	2&-1&&&\\
	&2&-1&&\\
	&&\ddots&\ddots&\\
	&&&2&-1
\end{pmatrix}_{\!p\times(p\!+\!1)}\!\!\!\!\!\!\!.
\end{equation*}
\end{example}

It is easy to know that the matrix $A$ is SPD, matrices $B$ and $C$ have full row rank, and $n=5p^2+p$, $m=2p^2$, $l=p^2+p$.
 
For inexact preconditioners $M_*$ in \cref{eq4,eq4.1,eq4.2,eq4.3,eq4.4,eq4.5,eq4.6,eq4.7} and $P_{ibd}$ in \cite{candes1}, the matrices $M_A$, $\widehat{S}$ and $\widehat{M}_S$ are chosen as
\begin{equation*}
	M_A = LL^T,
	\hspace{1.5em}
	\widehat{S} = diag(BM^{-1}_AB^T),
	\hspace{1.5em}
	\widehat{M}_S=C\widehat{S}^{-1}C^T,
\end{equation*}		
where $L$ is produced by the incomplete Cholesky decomposition of $A$ with the droptol being $10^{-8}$, the same selection as in \cite{candes1}.

\begin{table}[tbhp] 
	\caption{IT and CPU in the form of ``IT(CPU)" for different preconditioned-GMRES.}\label{tab0.1}
	\centering
	\small
	\begin{tabular}{r|lll}
		\hline
	\multirow{1}*{Method} &\multicolumn{1}{c}{$p=40$}&\multicolumn{1}{c}{$p=60$}&\multicolumn{1}{c}{$p=80$} \\			
		\hline
		$P_{ebd}$-GMRES&  6(2.242) & 6(15.325) & 6(62.219) \\
		$P_{ibd}$-GMRES&  82(0.247) & 104(0.776) & 92(2.083) \\
		$P_{ss}$-GMRES&  4(0.131) & 4(0.307) & 4(0.562) \\
		$P_{rss}$-GMRES&  3(0.104) & 3(0.246) & 3(0.441) \\ 
		$P_{1}$-GMRES&  3(2.371)  & 3(15.260) & 3(60.364) \\
		$P_{2}$-GMRES&  3(2.337)  & 3(15.327) & 3(62.075) \\
		$P_{3}$-GMRES&  3(1.232)  & 3(8.220)  & 3(33.615) \\
		\hline
		$M_d$-GMRES& 47(0.352)  & 52(0.990)  & 72(2.217) \\
	    $M_{ut}$-GMRES& 40(0.305) & 44(0.732) & 46(1.397)\\ 
	    $M_{lt}$-GMRES &  34(0.258) & 38(0.572) & 40(1.102)\\ 
	    $M_{f_1}$-GMRES&  104(3.647) & 114(8.692) & 109(14.752)\\ 
	    $M_{f_2}$-GMRES&  10(0.098) & 10(0.239) & 10(0.451)\\
	    $M_{f_3}$-GMRES & 8(0.178) & 9(0.463) & 9(0.818)\\
    	$M_{f_4}$-GMRES&  2(0.071) & 2(0.156) & 2(0.303)\\ 
    	$M_{f_5}$-GMRES&  2(0.125) & 2(0.282) & 2(0.535)\\
		\hline
	\end{tabular}
\end{table}

\cref{tab0.1} lists IT and CPU in the form of “IT(CPU)” for each preconditioned-GMRES for solving systems with $p=40, 60, 80$.
From \cref{tab0.1}, we can find that  $M_{f_4}$-GMRES and $M_{f_5}$-GMRES require less numbers of iteration steps or less CPU times than all the other tested preconditioned-GMRES. In these two preconditioned-GMRES,  $M_{f_4}$-GMRES seems to be much better since both the numbers of iteration steps and the CPU times required are less. In other words, $M_{f_4}$-GMRES is best among these fifteen preconditioned-GMRES tested and listed in \cref{tab0.1}.

Moreover, we can see from \cref{tab0.1} that $M_{f_2}$-GMRES is superior to all compared preconditioned-GMRES in CPU time except $P_{rss}$-GMRES when $p=80$, that $M_{f_3}$-GMRES is superior to all comparison preconditioned-GMRES in CPU time except $P_{ss}$-GMRES and $P_{rss}$-GMRES, and that $M_{f_2}$-GMRES and $M_{f_3}$-GMRES are better than $P_{ibd}$-GMRES for the number of iteration steps.

In addition, from \cref{tab0.1}, we can know that $M_{d}$-GMRES, $M_{ut}$-GMRES and $M_{lt}$-GMRES cost less CPU times compared to $P_{ebd}$-GMRES, $P_{1}$-GMRES, $P_{2}$-GMRES and $P_{3}$-GMRES. So do $M_{ut}$-GMRES and $M_{lt}$-GMRES compared to $P_{ibd}$-GMRES when $p=60$ and $p=80$. Compared to $P_{1}$-GMRES, $P_{2}$-GMRES, and $P_{3}$-GMRES, $M_{f_1}$-GMRES also needs less CPU time as $p$ becomes greater.

\cref{ex:D=0,ex:D=01} show us that, in the case of $D=0$, preconditioners $M_{f_4}$ and $M_{f_5}$ have higher efficiency in all numerical tests, and that $M_{f_2}$ and $M_{f_3}$ play well in most of tests compared to five preconditioners $P_{ebd}$, $P_{ibd}$, $P_{1}$, $P_{2}$ and $P_3$. In addition, the other four proposed preconditioners have superiority in the number of iteration steps or CPU time in some tests.

\subsection{The case of $D \neq 0$} In this case, we’ll compare $M_*$-GMRES to the $P_{ds}$-GMRES, $P_{rdf}$-GMRES, $P_{sdf}$-GMRES and $P_{mal}$-GMRES in the list for systems of linear equations coming from three real problems.

In numerical performance of this case,  $P_{ds}$-GMRES, $P_{rdf}$-GMRES, $P_{mal}$-GMRES and $P_{sdf}$-GMRES are used to solve the systems of linear equations in the form of \cref{eq1.1}, while $M_*$-GMRES is used to solve the equivalent systems in the form of \cref{eq1.0} for each suitable $``*"$.

The matrix $W$ appeared in $P_{sdf}$ is chosen as 
$W =diag(\mathfrak{B}\mathfrak{B^T})$ \cite{Grigori2019}, and matrices $W$ and $\widehat{S}$ appeared in $P_{mal}$ are chosen as $W =-\alpha \widehat{S}= diag(\mathfrak{B}\mathfrak{A^{-1}}\mathfrak{B^T})$ \cite{benzi20113}, where
\begin{equation*}
	\mathfrak{A}=
	\begin{pmatrix}
		A&0\\0&D
	\end{pmatrix},
	\hspace{1em}
	\mathfrak{B}=
	\begin{pmatrix}
		B&C^T
	\end{pmatrix}.
\end{equation*}

The only parameter, denoted by $\alpha$, needed necessarily in $P_{ds}$-GMRES, $P_{rdf}$-GMRES,  $P_{sdf}$-GMRES and $P_{mal}$-GMRES are chosen in the interval $(0,50)$ 
since numerical tests in \cite{benzi2011} and \cite{benzi20112} indicate that the best results are obtained for smaller $\alpha$.

The parameter $\alpha$ is chosen as well as possible in four steps.
At first, we record the numbers of iteration steps for each preconditioned-GMRES at all nodes in the interval with step size $1$, and obtain a proper sub-interval $[a,b] (a\le b)$ in which the numbers of iteration steps at all nodes reach the minimization.
Next, we carry out experiments for all new nodes in the interval $(0,10h]\cup[a-9h,b+9h]$ with step size $h=0.1$. Then, we repeat the process with step size $h=0.01$, $0.001$ in turn until the step size $h$ reaches $0.001$ or the number of iteration steps at each node belonging to $[a,b]$ equals the minimal number of iteration steps at all nodes in $[a-9h,b+9h]$. 
Finally, we denote by $\alpha_{opt}$ the parameter node which belongs to the last interval $[a,b]$ and  makes simultaneously both the number of iteration steps and the CPU time least, and call it the “numerical optimal parameter”.

\begin{example}\label{exa1}
\textbf{A quadratic program with equality constraints.} This kind of system of linear equations arises from the quadratic program with equality constraints, named as AUG3DC with the number of nodes in the corresponding direction (named as NND for simplicity), in the CUTEr collection \cite{G2003}.
\end{example}

The original system of linear equations is in the form of \cref{eq1.1}.  In the system, matrices $A$ and $D$ are both SPD, and the matrix $B$ has full row rank.

In this example, the NND is chosen as $15$, $20$ and $25$, respectively. 
For preconditioners denoted by $M_*$ in \cref{eq4,eq4.1,eq4.2,eq4.3,eq4.4,eq4.5,eq4.6,eq4.7},
 matrices $M_A$, $\widehat{S}$ and $\widehat{M}_S$ are chosen as
\begin{equation*}
M_A = A,
\hspace{1em}
\widehat{S} = BM_A^{-1}B^T+0.1I,
\hspace{1em}
\widehat{M}_S=D+C\widehat{S}^{-1}C^T.
\end{equation*}

\cref{tab2} lists the numerical optimal parameter $\alpha_{opt}$, and IT and CPU in the form of “IT(CPU)” for each preconditioned-GMRES (at $\alpha_{opt}$ if needed). 

\begin{table}[tbhp] 
	\caption{IT and CPU in the form of ``IT(CPU)" and $\alpha_{opt}$ for different preconditioned-GMRES.}\label{tab2}
	\centering
	\small
	\begin{tabular}{r|llllll}
		\hline
		\multirow{2}*{Method}&\multicolumn{2}{c}{NND=$15$}&\multicolumn{2}{c}{NND=$20$}&\multicolumn{2}{c}{NND=$25$}\\
		\cline{2-7}
		& $\alpha_{opt}$& IT(CPU) & $\alpha_{opt}$& IT(CPU)& $\alpha_{opt}$& IT(CPU)\\
		\hline
		$P_{ds}$-GMRES& 0.59  & 20(1.262) & 0.41 & 21(4.000) &0.32 &22(11.373)\\
		$P_{rdf}$-GMRES& 0.8 & 15(0.986) & 0.43 & 17(3.189) & 0.31 &19(9.585)\\
		$P_{sdf}$-GMRES& 0.09 & 14(0.926) & 0.075 & 16(3.032) & 0.04 &19(9.596)\\ 
		$P_{mal}$-GMRES& 9.2 & 14(1.342) & 15 & 17(5.256) & 21 & 19(21.883)\\
		\hline
		$M_d$-GMRES& $--$ & 44(66.979) & $--$ & 47(266.702) & $--$ &56(913.062)\\
		$M_{ut}$-GMRES& $--$ & 45(68.502) & $--$ & 45(255.672) & $--$ &51(856.850)\\ 
		$M_{lt}$-GMRES & $--$ & 43(67.787) & $--$ & 46(261.111) & $--$ &51(859.511)\\ 
		$M_{f_1}$-GMRES& $--$ & 44(67.065) & $--$ & 45(255.084) & $--$ &50(830.257)\\ 
		$M_{f_2}$-GMRES& $--$ & 22(0.525) & $--$ & 23(1.829) & $--$ &25(6.405)\\
		$M_{f_3}$-GMRES & $--$ & 7(0.285) & $--$ & 8(1.015) & $--$ &9(3.285)\\
		$M_{f_4}$-GMRES& $--$ & 7(0.275) & $--$ & 8(0.985) & $--$ &9(3.143)\\ 
		$M_{f_5}$-GMRES& $--$ & 6(0.185) & $--$ & 7(0.971) & $--$ &8(2.625)\\
		\hline	
	\end{tabular}\\
	\leftline{\footnotesize  \ \ \ \ \ \ \ $``--"$ means that $\alpha_{opt}$ is not required.}		
\end{table}

From \cref{tab2}, we can find that $M_{f_2}$-GMRES,  $M_{f_3}$-GMRES, $M_{f_4}$-GMRES and $M_{f_5}$-GMRES require less numbers of iteration steps or less CPU times than all the other tested preconditioned-GMRES. Among these four preconditioned-GMRES,  $M_{f_5}$-GMRES seems to be much better since both the numbers of iteration steps and the CPU times required are less. In other words,  $M_{f_5}$-GMRES is the best compared to all other eleven preconditioned-GMRES listed in \cref{tab2}. 
For this example, $M_{d}$, $M_{ut}$, $M_{lt}$ and $M_{f_1}$, the other four preconditioners proposed in this paper, are mediocre for accelerating GMRES compared to $P_{ds}$, $P_{rdf}$, $P_{sdf}$ and $P_{mal}$.

\begin{example}\label{exa2} 
    \textbf{The leaky lid driven cavity problem.} 
    This kind of system of linear equations are generated from the discretization of the incompressible Stokes flow problem described by 
	\begin{equation}\label{stokes}
	\begin{array}{rl}
	-\bigtriangledown^2\textbf{u}+\bigtriangledown p=\textbf{0}& \text{in} \  \Omega,\\
	\bigtriangledown \cdot \textbf{u}=0& \text{in}\  \Omega,\\
	\textbf{u}=\textbf{0} & \text{on}\  \partial\Omega\backslash \partial\Omega_{lid},\\
	\textbf{u}_x=1 & \text{on} \ \partial\Omega_{lid}
	\end{array}
	\hspace{1em}
	\text{with}
	\hspace{1em}
	\begin{array}{l}
	\Omega = (-1,1)\times (-1,1),\\
	\partial\Omega_{lid}=[-1, 1] \times \{1\},
	\end{array}
	\end{equation} 
and 
$\textbf{u}=(\textbf{u}_x,\textbf{u}_y)$ is a vector-valued function representing the velocity of the fluid, and the scalar function $p$ represents the pressure. 	
\end{example}
	
Three discretization systems of \cref{stokes} in the form of \cref{eq1.1} are generated via IFISS software package \cite{IFISS} by using Q2-Q1 finite elements on the uniform grids with $32 \times 32$, $64 \times 64$ and $128 \times 128$ meshes, respectively. 
In these three discretization systems, $A$, $D$ are SPD and $B$ has full row rank.

For each preconditioner $M_*$ in \cref{eq4,eq4.1,eq4.2,eq4.3,eq4.4,eq4.5,eq4.6,eq4.7}, the matrices $M_A$, $\widehat{S}$ and $\widehat{M}_S$ are
\begin{equation*}
M_A = A,
\hspace{1.5em}
\widehat{S} = BM^{-1}_AB^T+0.01diag(BM^{-1}_AB^T),
\hspace{1.5em}
\widehat{M}_S=D+C\widehat{S}^{-1}C^T.
\end{equation*}

\begin{table}[tbhp]
	\caption{IT and CPU in the form of ``IT(CPU)" and  $\alpha_{opt}$ for different preconditioned-GMRES.}\label{tab2.0}
	\centering
	\small
		\begin{tabular}{r|llllll}
		\hline
		\multirow{2}*{Method} & \multicolumn{2}{c}{$32\times 32$} &  \multicolumn{2}{c}{$64\times 64$} &  \multicolumn{2}{c}{$128\times 128$}\\
		\cline{2-7}
		& $\alpha_{opt}$& IT(CPU) & $\alpha_{opt}$& IT(CPU)& $\alpha_{opt}$& IT(CPU)\\
		\hline
		$P_{ds}$-GMRES  &0.001& 15(0.489) &0.001& 26(6.571) &0.001& 48(165.108)\\
		$P_{rdf}$-GMRES &0.001& 15(0.496) &0.001& 21(5.398) &49.92& 13(25.107)\\	
		$P_{sdf}$-GMRES &0.22& 10(0.345) &0.19& 11(2.719) &0.22& 11(37.148)\\
		$P_{mal}$-GMRES &2.4& 12(0.537) &2.8& 12(3.553) &2.0& 13(24.361)\\
		\hline
		$M_d$-GMRES&$--$& 45(4.458) &$--$& 55(167.196) &$--$& $--$\\
		$M_{ut}$-GMRES&$--$& 48(5.174) &$--$& 55(181.419) &$--$& $--$\\
		$M_{lt}$-GMRES&$--$& 29(3.063) &$--$& 38(118.627) &$--$& $--$\\
		$M_{f_1}$-GMRES&$--$& 29(2.897) &$--$& 36(108.328) &$--$& $--$ \\
		$M_{f_2}$-GMRES&$--$& 24(0.587) &$--$& 24(8.829) &$--$& 24(216.108)\\
		$M_{f_3}$-GMRES&$--$& 4(0.142) &$--$& 4(2.493) &$--$& 4(62.394)\\
		$M_{f_4}$-GMRES&$--$& 3(0.128) &$--$& 3(1.679) &$--$& 3(41.105)\\
		$M_{f_5}$-GMRES&$--$& 3(0.050) &$--$& 3(0.239) &$--$& 3(1.510)\\
		\hline
	\end{tabular}\\	
	{\footnotesize $``--"$ means that $\alpha_{opt}$ is not required or that the CPU time exceeds 1000 seconds.}
\end{table}

\cref{tab2.0} lists the numerical optimal parameter $\alpha_{opt}$, and IT and CPU in the form of “IT(CPU)” for different preconditioned-GMRES (at $\alpha_{opt}$ if needed) for the incompressible Stokes problem discretized on different meshes. \cref{tab2.0} shows us that for this example, less numbers of iteration steps or less CPU times are needed in the case of using $M_{f_3}$-GMRES, $M_{f_4}$-GMRES and $M_{f_5}$-GMRES. 
Among these three preconditioned-GMRES,  $M_{f_5}$-GMRES seems to be much better since both the numbers of iteration steps and the CPU times required are less as mesh becomes dense. In other words,  $M_{f_5}$-GMRES is the best compared to all other eleven  preconditioned-GMRES listed in \cref{tab2.0}. 
We can see that, for this example, $M_{d}$, $M_{ut}$, $M_{lt}$, $M_{f_1}$ and $M_{f_2}$, are mediocre for accelerating GMRES compared to $P_{ds}$, $P_{rdf}$, $P_{mal}$ and $P_{sdf}$. 

The eigenvalue distributions and the estimated eigenvalue bounds, obtained in \cref{thm4}, about the preconditioned matrices with proposed preconditioners $M_*$ in \cref{eq4,eq4.1,eq4.2,eq4.3,eq4.4,eq4.5,eq4.6,eq4.7},  are drawn in \cref{fig1} for $32 \times 32$ meshes. 
In \cref{fig1}, rectangles mean the lower and upper bounds of the real and imaginary parts of the eigenvalues of the preconditioned matrix $M^{-1}_*K$ for $*=d, ut, lt, f_1, f_2, f_3, f_4, f_5$, and dots mean the ``exact" or clusters of  eigenvalues.

\begin{figure}[H] 
	\centering
	\subfigure[$M^{-1}_{d}K$]{\includegraphics[width=0.21\textwidth]{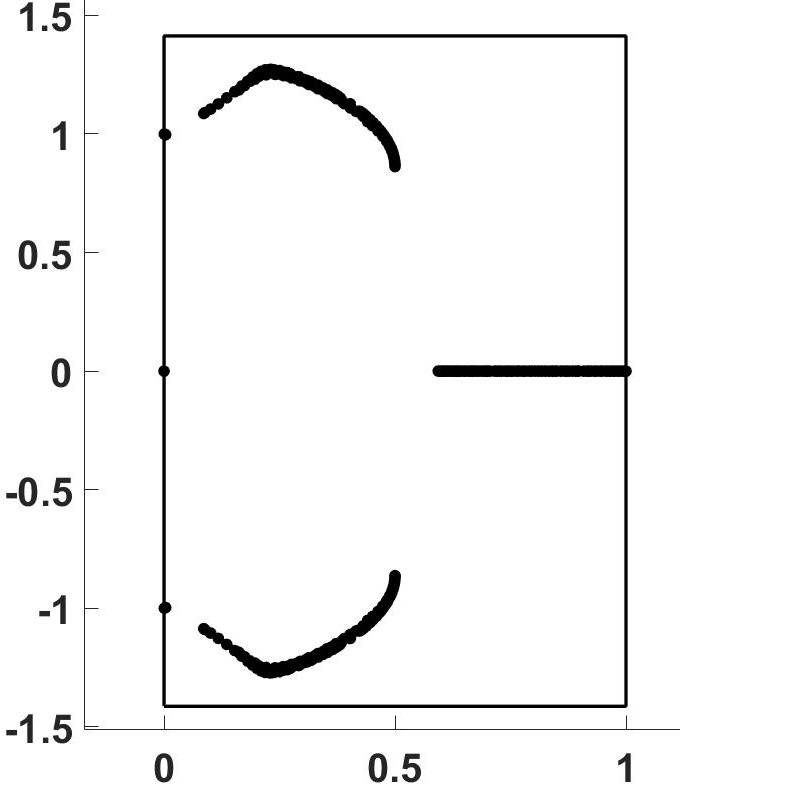}}
	\subfigure[$M^{-1}_{ut}K$]{\includegraphics[width=0.21\textwidth]{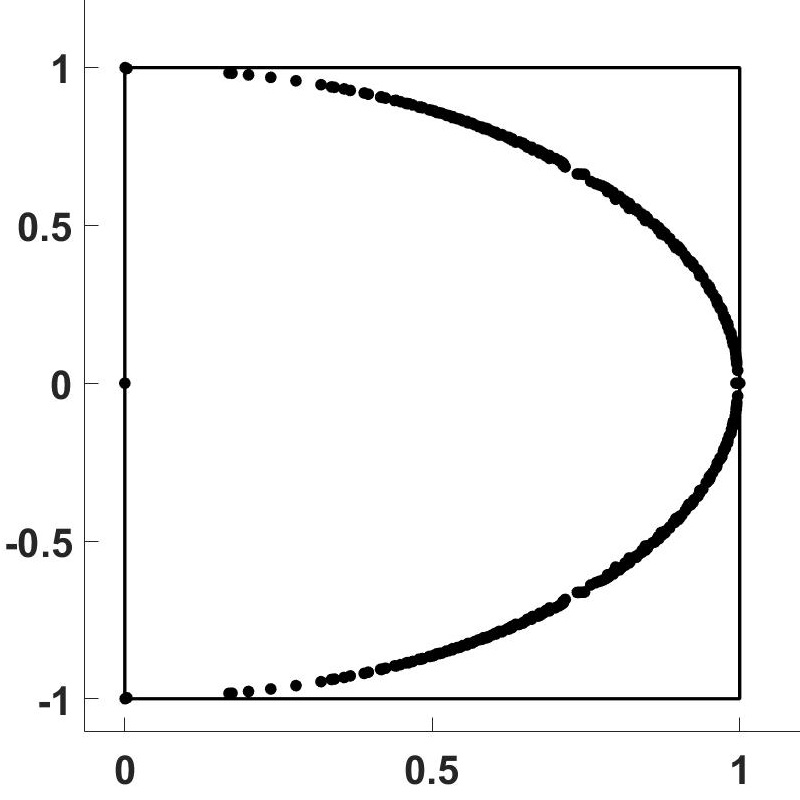}}
	\subfigure[$M^{-1}_{lt}K$]{\includegraphics[width=0.21\textwidth]{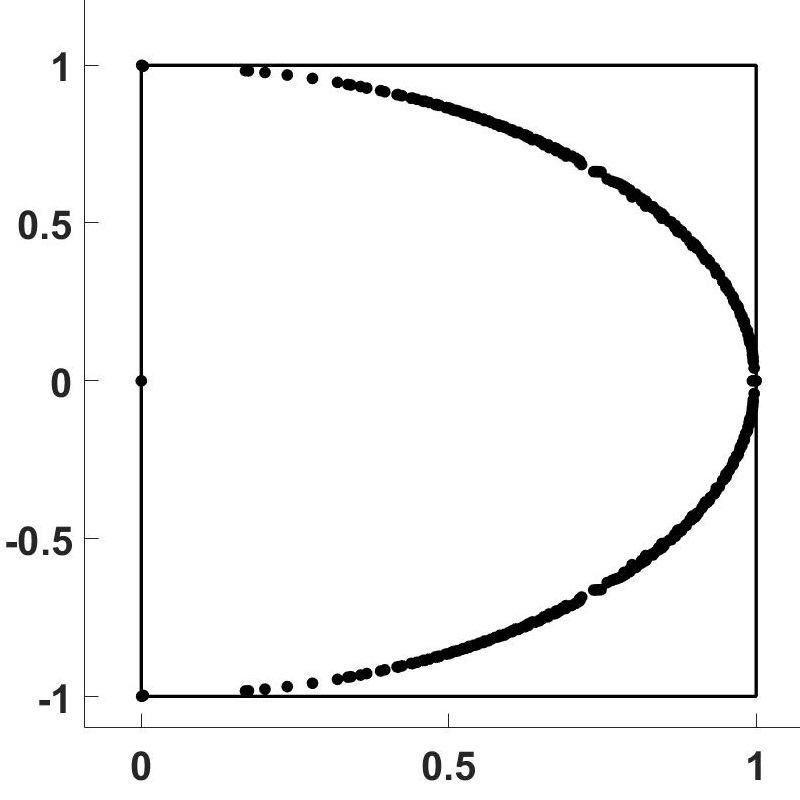}}
	\subfigure[$M^{-1}_{f_1}K$]{\includegraphics[width=0.21\textwidth]{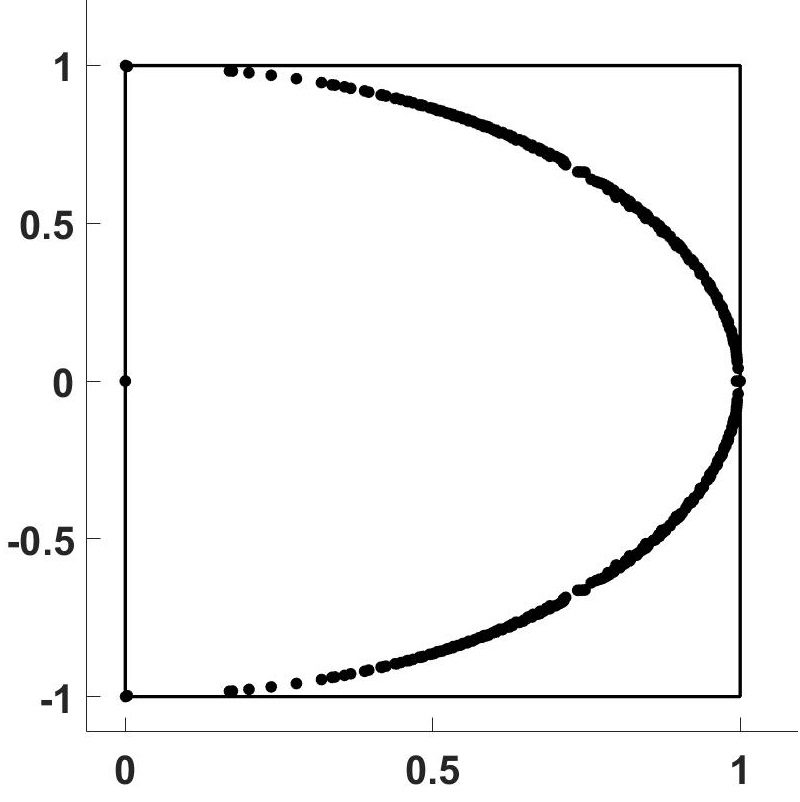}}
	\subfigure[$M^{-1}_{f_2}K$]{\includegraphics[width=0.23\textwidth]{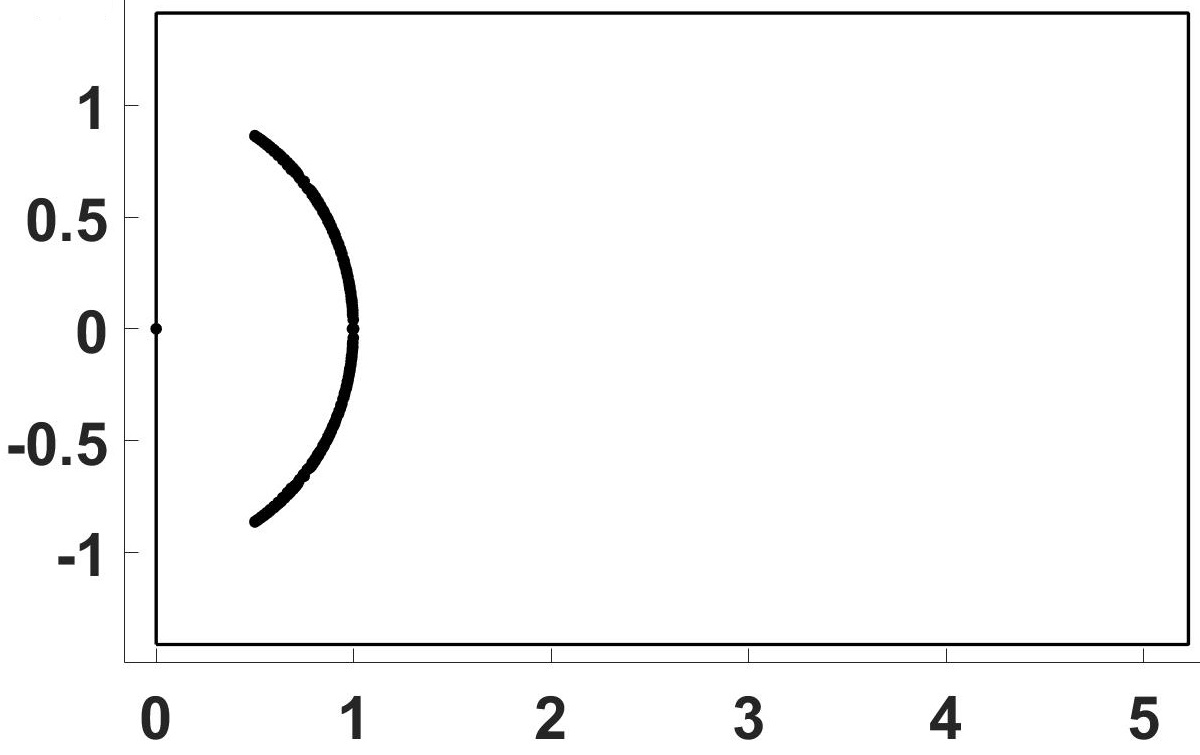}}
	\subfigure[$M^{-1}_{f_3}K$]{\includegraphics[width=0.23\textwidth]{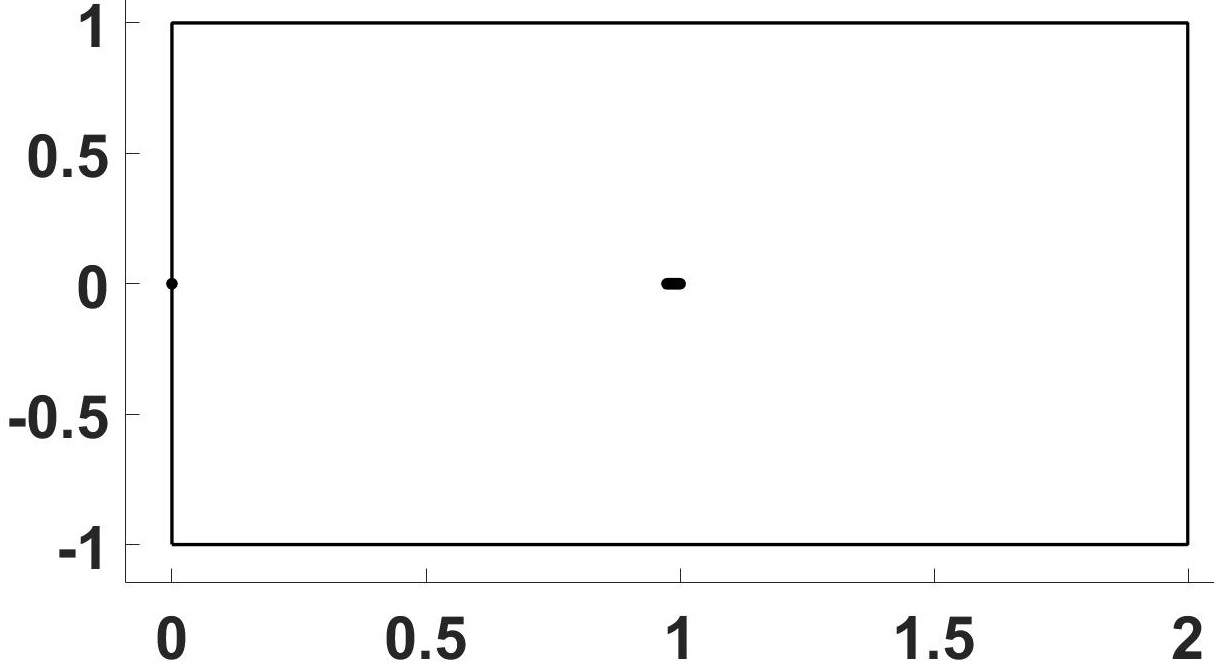}}
	\subfigure[$M^{-1}_{f_4}K$]{\includegraphics[width=0.23\textwidth]{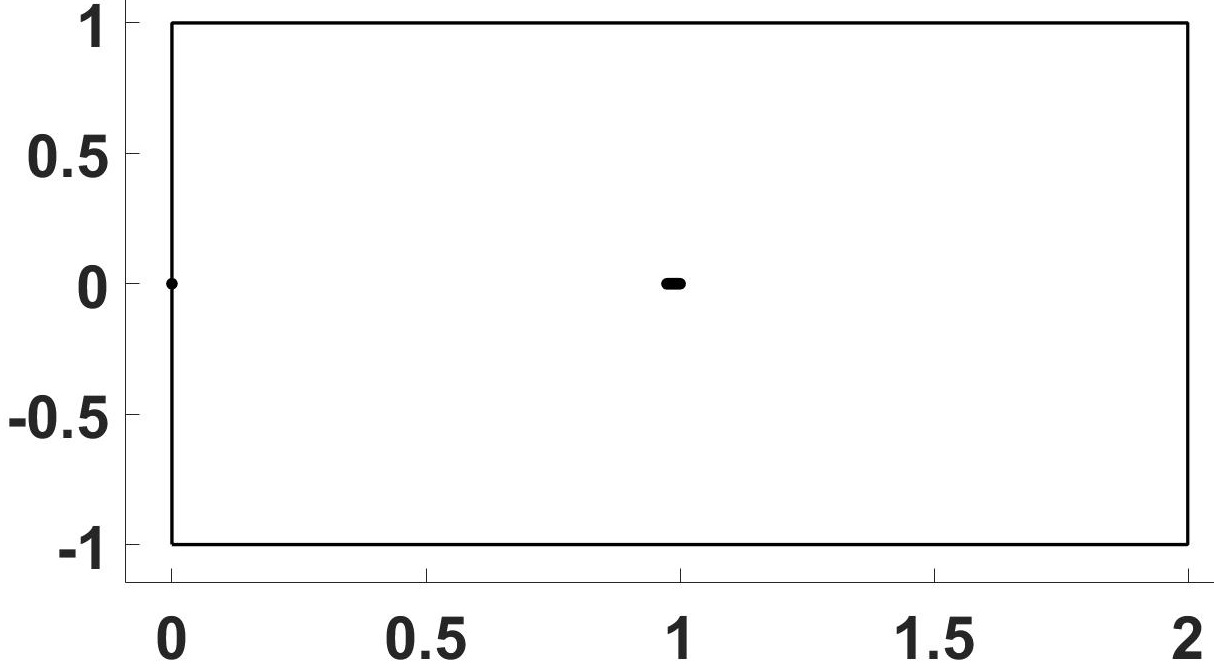}}
	\subfigure[$M^{-1}_{f_5}K$]{\includegraphics[width=0.23\textwidth]{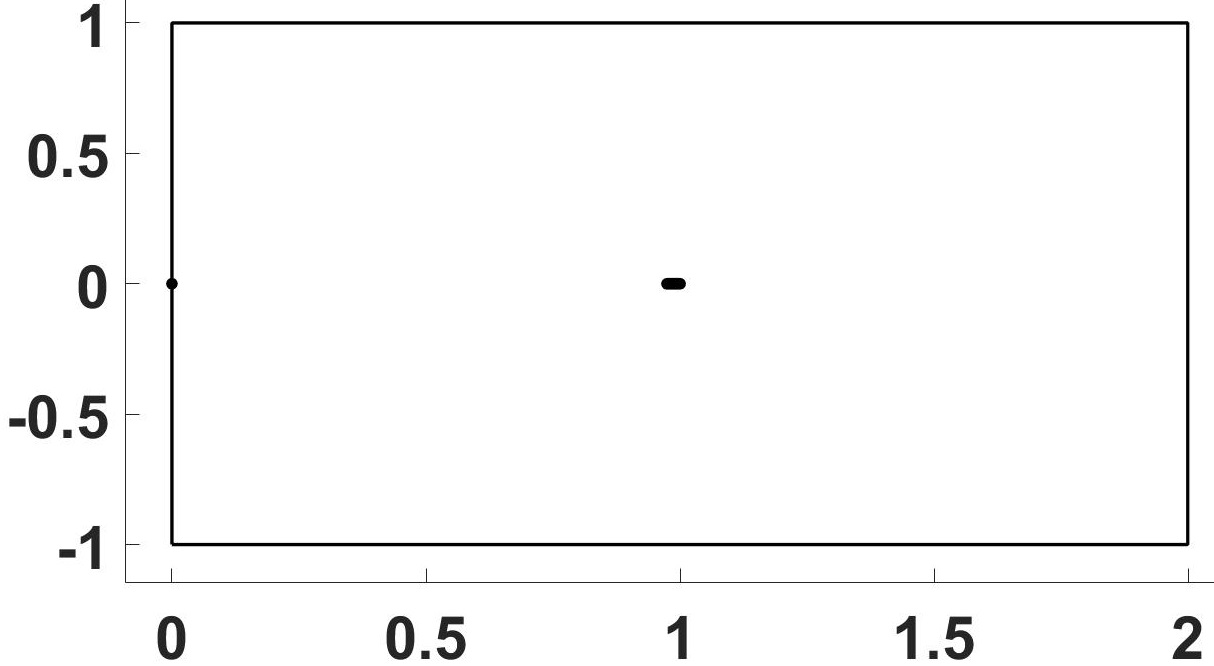}}
	\caption{Estimated ranges of eigenvalues of the preconditioned matrix $M^{-1}_{*}K$ for $32 \times 32$ meshes in \cref{exa2}. Here each ``exact" or each cluster of eigenvalue is marked by dot `·', and the estimated bounds are marked by rectangles.}
	\label{fig1}
\end{figure}

From \cref{fig1}, we can observe that 
$M^{-1}_{f_3}K$, $M^{-1}_{f_4}K$ and $M^{-1}_{f_5}K$ gather eigenvalues near $0$ and $1$, and that 
the estimated bounds of the eigenvalues are sharp for $M^{-1}_{d}K$, $M^{-1}_{ut}K$, $M^{-1}_{lt}K$ and $M^{-1}_{f_1}K$. Moreover, the estimated lower bounds of the real parts of the eigenvalues are sharp for $M^{-1}_{f_2}K$, $M^{-1}_{f_3}K$, $M^{-1}_{f_4}K$ and $M^{-1}_{f_5}K$.

\begin{example}\label{exa3}(\cite{Rees2010}) 
	\textbf{The Poisson control problem}.	This kind of system of linear equations are generated from the discretization of the distributed control problem with Dirichlet boundary conditions defined by
	\begin{equation*}
		\begin{aligned}	
		&\min \limits_{u,f} \frac{1}{2}\parallel u-\hat{u}\parallel^2_{L_2(\Omega)}+\frac{\beta}{2}\parallel f\parallel^2_{L_2(\Omega)}\\
		\text{s.t.}\  
		&\begin{array}{rl}
		-\bigtriangledown^2{u} ={f}& \text{in} \  \Omega,\\
		u=g& \text{on}\  \partial\Omega,
	\end{array}	
    \end{aligned}
	\end{equation*}	
where $u$ is the state, $\hat{u}$ is the desired state, $0<\beta\ll 1$ is a regularization parameter, $f$ is the control, and $\omega$ is the domain with boundary $\partial\Omega$.  
\end{example}

  Three systems of linear equations in the form of \cref{eq1.1} are generated automatically by the MATLAB code, used in \cite{Rees2010}, download from  \cite{Rees2019}, after   parameters in ``set\_def\_setup.m" are selected as         
def\_setup.bc = `dirichlet',  
def\_setup.beta = 1e-2,     
def\_setup.ob = 1,      
def\_setup.type = `dist2d' and def\_setup.pow = 5, 6 and 7.  
In these three systems, $A$ and $D$ are SPD, $C$ and $B$ have full row rank.

In this example, for each preconditioner $M_*$ in \cref{eq4,eq4.1,eq4.2,eq4.3,eq4.4,eq4.5,eq4.6,eq4.7}, matrices $M_A$, $\widehat{S}$ and $\widehat{M}_S$ are taken by 
\begin{equation*}
	M_A = LL^T,
	\hspace{1.5em}
	\widehat{S} = tridiag(BM^{-1}_AB^T),
	\hspace{1.5em}
	\widehat{M}_S=D+C\widehat{S}^{-1}C^T,
\end{equation*}	
where $L$ is produced by the incomplete Cholesky decomposition of $A$ with the droptol being $10^{-8}$, $tridiag(\cdot)$ is the tridiagonal matrix whose tridiagonal part consists of the tridiagonal entries of the corresponding matrix in turn.

\begin{table}[tbhp]
	\caption{ IT and CPU in the form of ``IT(CPU)" and $\alpha_{opt}$ for different preconditioned-GMRES.}\label{tab3.0}
	\centering
	\small
	\begin{tabular}{r|llllll}
		\hline
	\multirow{3}*{Method}&  \multicolumn{2}{c}{def\_setup.pow = 5} & \multicolumn{2}{c}{def\_setup.pow = 6} &  \multicolumn{2}{c}{def\_setup.pow = 7}\\
	\cline{2-7}
	& $\alpha_{opt}$& IT(CPU) & $\alpha_{opt}$& IT(CPU)& $\alpha_{opt}$& IT(CPU)\\
		\hline
		$P_{ds}$-GMRES  &0.001& 20(0.721) & 0.002 & 59(29.123)&0.002&111(373.931)\\
		$P_{rdf}$-GMRES &0.2& 7(0.271) & 0.1 & 7(3.583)&0.016&6(18.071)\\	
		$P_{sdf}$-GMRES &0.02 & 7(0.267) & 0.01 & 7(3.746)&0.003&6(18.467)\\
		$P_{mal}$-GMRES &0.001& 18(0.834) & 0.001 & 10(4.160)&0.001&14(115.288)\\
	\hline
$M_d$-GMRES& $--$ & 49(1.262) & $--$ & 47(18.681)& $--$ & 47(434.678)\\
$M_{ut}$-GMRES& $--$ & 20(0.609) & $--$ & 20(8.587)& $--$ & 18(103.551) \\
$M_{lt}$-GMRES & $--$ & 48(1.397) & $--$ & 65(27.572)& $--$ & 69(630.851)\\
$M_{f_1}$-GMRES& $--$ & 20(4.439) & $--$ & 20(128.855)& $--$ & $--$\\
$M_{f_2}$-GMRES& $--$ & 8(0.126) & $--$ & 8(0.641)& $--$ & 8(315.389)\\
$M_{f_3}$-GMRES & $--$ & 3(0.195) & $--$ & 3(1.658)& $--$ & 3(12.057)\\
$M_{f_4}$-GMRES& $--$ & 4(0.221) & $--$ & 4(2.553)& $--$ & 4(12.151)\\
$M_{f_5}$-GMRES& $--$ & 3(1.716) & $--$ & 3(71.853)& $--$ & $--$\\
\hline	
\end{tabular}\\
\leftline{\footnotesize \ \ \ \ \ $``--"$ means that $\alpha_{opt}$ is not required or that the CPU time exceeds 1000 seconds.}		
\end{table}

\cref{tab3.0} lists the numerical optimal parameter $\alpha_{opt}$, and IT and CPU in the form of “IT(CPU)” for different preconditioned-GMRES (at $\alpha_{opt}$ if needed) for the Poisson control problem. In \cref{tab3.0}, less numbers of iteration steps or less CPU times are needed in the case of using $M_{f_3}$-GMRES and $M_{f_4}$-GMRES than all the other tested preconditioned-GMRES.
Compared to $P_{ds}$-GMRES, $M_d$-GMRES needs less number of iteration steps when def\_setup.pow = 6 and 7, and $M_{ut}$-GMRES has superiority in the number of iteration steps and CPU time.  
$M_{f_2}$-GMRES and $M_{f_5}$-GMRES play well when def\_setup.pow = 5 and 6, while spend much CPU time when def\_setup.pow = 7.

 \cref{exa1,exa2,exa3} show us that, in the case of $D\neq 0$, preconditioners $M_{f_3}$ and $M_{f_4}$ have higher efficiency in all numerical tests, and that $M_{f_5}$ plays well in most of tests. So, the efficiency of these preconditioners suggests it is reasonable and beneficial to transform the system \cref{eq1.1} into the equivalent one in the form of \cref{eq1.0}. In addition, the other five proposed preconditioners are mediocre in this case even though they play not bad in the case of $D=0$.

\section{Conclusions}
\label{sec:con}
In this paper, by making use of the three-by-three block structure of the coefficient matrix, we have introduced eight inexact block factorization preconditioners based on a kind of inexact factorization for the coefficient matrix of the system of linear equations in the form of \cref{eq1.0}. 
The bounds of the real and imaginary parts of eigenvalues of the preconditioned matrices have been obtained based on our generalizing Bendixson Theorem and developing a unified technique of spectral equivalence. 
Numerical experiments on test problems show us that the proposed preconditioner $M_{f_4}$ is very efficient and can lead to high-speed and effective preconditioned-GMRES, and that preconditioners $M_{f_3}$ and $M_{f_5}$ play well in most of cases. 
The other five preconditioners are mediocre for accelerating GMRES in the case of $D\neq 0$, and are comparable in the case of $D=0$. 
The efficiency of $M_{f_3}$, $M_{f_4}$ and $M_{f_5}$ appeared in most of tests shows that it is reasonable and beneficial to convert the system \cref{eq1.1} into the equivalent one of type \cref{eq1.0}. 

As we can see in \cref{sec:eig}, the reason why we take these eight preconditioners into consideration together is that they are in a similar structure and have similar characteristics in theoretical analysis so that they can be put into a same theoretical frame, even if there are five preconditioners do not lead to much greater efficiency in the case of $D\neq 0$. 
Of cause, as mentioned in \cite{Beik2018}, ``it is quite possible that some of the methods that were found to be not competitive for these test problems considered here may well turn out to be useful on other problems and, conversely, some of the methods found to be effective here may well perform poorly on other problems".

\bibliographystyle{siamplain}
\bibliography{refe}

\begin{thebibliography}{10}

\bibitem{bai2009}
{\sc Z.-Z. Bai}, {\em Optimal parameters in the {HSS}-like methods for
  saddle-point problems}, Numer. Linear Algebra Appl., 16 (2009), pp.~447--479.

\bibitem{bai2011}
{\sc Z.-Z. Bai}, {\em Block preconditioners for elliptic {PDE}-constrained
  optimization problems}, Computing, 91 (2011), pp.~379--395.

\bibitem{bai2013}
{\sc Z.-Z. Bai, M.~Benzi, F.~Chen, and Z.-Q. Wang}, {\em Preconditioned {MHSS}
  iteration methods for a class of block two-by-two linear systems with
  applications to distributed control problems}, IMA J. Numer. Anal., 33
  (2013), pp.~343--369.

\bibitem{bai20131}
{\sc Z.-Z. Bai, F.~Chen, and Z.-Q. Wang}, {\em Additive block diagonal
  preconditioning for block two-by-two linear systems of skew-{H}amiltonian
  coefficient matrices}, Numer. algorithms, 62 (2013), pp.~655--675.

\bibitem{bai2004}
{\sc Z.-Z. Bai, G.~H. Golub, and J.-Y. Pan}, {\em Preconditioned {H}ermitian
  and skew-{H}ermitian splitting methods for non-{H}ermitian positive
  semidefinite linear systems}, Numer. Math., 98 (2004), pp.~1--32.

\bibitem{bai2016}
{\sc Z.-Z. Bai and M.~Tao}, {\em Rigorous convergence analysis of alternating
  variable minimization with multiplier methods for quadratic programming
  problems with equality constraints}, BIT Numer. Math., 56 (2016),
  pp.~399--422.

\bibitem{bai2017}
{\sc Z.-Z. Bai and M.~Tao}, {\em On preconditioned and relaxed {AVMM} methods
  for quadratic programming problems with equality constraints}, Linear Algebra
  Appl., 516 (2017), pp.~264--285.

\bibitem{barker2016}
{\sc A.~T. Barker, T.~Rees, and M.~Stoll}, {\em A fast solver for an {H}1
  regularized {PDE}-constrained optimization problem}, Commun. Comput. Phys.,
  19 (2016), pp.~143--167.

\bibitem{Beik2018}
{\sc F.~P.~A. Beik and M.~Benzi}, {\em Iterative methods for double saddle
  point systems}, SIAM J. Matrix Anal. Appl., 39 (2018), pp.~902--921.

\bibitem{benzi2005}
{\sc M.~Benzi, G.~H. Golub, and J.~Liesen}, {\em Numerical solution of saddle
  point problems}, Acta Numer., 14 (2005), pp.~1--137.

\bibitem{benzi2011}
{\sc M.~Benzi and X.-P. Guo}, {\em A dimensional split preconditioner for
  {S}tokes and linearized {N}avier–{S}tokes equations}, Appl. Numer. Math.,
  61 (2011), pp.~66--76.

\bibitem{benzi20112}
{\sc M.~Benzi, M.~Ng, Q.~Niu, and Z.~Wang}, {\em A {R}elaxed {D}imensional
  {F}actorization preconditioner for the incompressible {N}avier-{S}tokes
  equations}, J. Comput. Phys., 230 (2011), pp.~6185--6202.

\bibitem{benzi20113}
{\sc M.~Benzi, M.~A. Olshanskii, and Z.~Wang}, {\em Modified augmented
  {L}agrangian preconditioners for the incompressible {N}avier-{S}tokes
  equations}, Int. J. Numer. Meth. Fl., 66 (2011), pp.~486--508.

\bibitem{benzi2006}
{\sc M.~Benzi and V.~Simoncini}, {\em On the eigenvalues of a class of saddle
  point matrices}, Numer. Math., 103 (2006), pp.~173--196.

\bibitem{B1996}
{\sc A.~Bj\"{o}rck}, {\em Numerical Methods for Least Squares Problems}, SIAM,
  Philadelphia, PA., 1996.

\bibitem{bradley2021}
{\sc S.~Bradley and C.~Greif}, {\em Eigenvalue bounds for double saddle-point
  systems}, arXiv preprint arXiv:2110.13328,  (2021).

\bibitem{candes2}
{\sc J.~R. Bunch and B.~N. Parlett}, {\em Direct methods for solving symmetric
  indefinite systems of linear equations}, SIAM J. Numer. Anal., 8 (1971),
  pp.~639--655.

\bibitem{cai2021schur}
{\sc M.-C. Cai, G.-L. Ju, and J.-Z. Li}, {\em Schur complement based
  preconditioners for twofold and block tridiagonal saddle point problems},
  arXiv preprint arXiv:2108.08332,  (2021).

\bibitem{cao2020}
{\sc Y.~Cao}, {\em Shift-splitting preconditioners for a class of block
  three-by-three saddle point problems}, Appl. Math. Lett., 96 (2019),
  pp.~40--46.

\bibitem{chen2000}
{\sc Z.-M. Chen, Q.~Du, and J.~Zou}, {\em Finite element methods with matching
  and nonmatching meshes for {M}axwell equations with discontinuous
  coefficients}, SIAM J. Numer. Anal., 37 (2000), pp.~1542--1570.

\bibitem{IFISS}
{\sc H.~C. Elman, A.~Ramage, and D.~J. Silvester}, {\em Algorithm 866: {IFISS},
  a matlab toolbox for modelling incompressible flow}, ACM Trans. Math. Softw.,
  33 (2007), pp.~14--es.

\bibitem{Gatica2001}
{\sc G.~N. Gatica and N.~Heuer}, {\em An expanded mixed finite element approach
  via a dual–dual formulation and the minimum residual method}, J. Comput.
  Appl. Math., 132 (2001), pp.~371--385.

\bibitem{Gatica2002}
{\sc G.~N. Gatica and N.~Heuer}, {\em Conjugate gradient method for dual-dual
  mixed formulations}, Math. Comput., 71 (2002), pp.~1455--1472.

\bibitem{G2003}
{\sc N.~I.~M. Gould, D.~Orban, and P.~L. Toint}, {\em {CUTE}r and {S}if{D}ec, a
  constrained and unconstrained testing environment, revisited}, ACM Trans.
  Math. Softw, 29 (2003), pp.~373--394.

\bibitem{Grigori2019}
{\sc L.~Grigori, Q.~Niu, and Y.-X. Xu}, {\em Stabilized dimensional
  factorization preconditioner for solving incompressible {N}avier-{S}tokes
  equations}, Appl. Numer. Math., 146 (2019), pp.~309--327.

\bibitem{han2013}
{\sc D.-R. Han and X.-M. Yuan}, {\em Local linear convergence of the
  alternating direction method of multipliers for quadratic programs}, SIAM J.
  Numer. Anal., 51 (2013), pp.~3446--3457.

\bibitem{he2021}
{\sc Y.-W. He, J.~Li, and L.-S. Meng}, {\em Three effective preconditioners for
  double saddle point problem}, AIMS Math., 6 (2021), pp.~6933--6947.

\bibitem{candes1}
{\sc N.~Huang and C.-F. Ma}, {\em Spectral analysis of the preconditioned
  system for the 3×3 block saddle point problem}, Numer. Algor., 81 (2019),
  pp.~421--444.

\bibitem{ke2018}
{\sc Y.-F. Ke and C.-F. Ma}, {\em Some preconditioners for elliptic
  {PDE}-constrained optimization problems}, Comput. Math. Appl., 75 (2018),
  pp.~2795--2813.

\bibitem{L1968}
{\sc J.~L. Lions}, {\em Optimal Control of Systems Governed by Partial
  Differential Equations}, Springer, Berlin, Germany, 1968.

\bibitem{Mardal2017}
{\sc K.~A. Mardal, B.~F. Nielsen, and M.~Nordaas}, {\em Robust preconditioners
  for {PDE}-constrained optimization with limited observations}, BIT Numer.
  Math., 57 (2017), pp.~405--431.

\bibitem{Mirchi2020}
{\sc H.~Mirchi and D.~K. Salkuyeh}, {\em A new preconditioner for elliptic
  {PDE}-constrained optimization problems}, Numer. algorithms, 83 (2020),
  pp.~653--668.

\bibitem{nikolova2008}
{\sc M.~Nikolova, M.~K. Ng, S.-Q. Zhang, and W.-K. Ching}, {\em Efficient
  reconstruction of piecewise constant images using nonsmooth nonconvex
  minimization}, SIAM J. Imaging Sci., 1 (2008), pp.~2--25.

\bibitem{candes3}
{\sc Y.~Notay}, {\em A new analysis of block preconditioners for saddle point
  problems}, SIAM J. Matrix Anal. Appl., 35 (2014), pp.~143--173.

\bibitem{pearson2021}
{\sc J.~W. Pearson and A.~Potschka}, {\em A note on symmetric positive definite
  preconditioners for multiple saddle-point systems}, arXiv preprint
  arXiv:2106.12433,  (2021).

\bibitem{Pe2012}
{\sc J.~W. Pearson, M.~Stoll, and A.~J. Wathen}, {\em Regularization-robust
  preconditioners for time-dependent {PDE}-constrained optimization problems},
  SIAM J. Matrix Anal. Appl., 33 (2012), pp.~1126--1152.

\bibitem{Pearson2014}
{\sc J.~W. Pearson, M.~Stoll, and A.~J. Wathen}, {\em Preconditioners for
  state-constrained optimal control problems with {M}oreau-{Y}osida penalty
  function}, Numer. Linear Algebra Appl., 21 (2014), pp.~81--97.

\bibitem{Pearson2012}
{\sc J.~W. Pearson and A.~J. Wathen}, {\em A new approximation of the {S}chur
  complement in preconditioners for {PDE}-constrained optimization}, Numer.
  Linear Algebra Appl., 19 (2012), pp.~816--829.

\bibitem{Rees2019}
{\sc T.~Rees}, {\em Github - tyronerees/poisson-control}, 2019. Accessed:
  2022-3-21.
\newblock \url{https://github.com/ tyronerees/poisson-control}.

\bibitem{Rees2010}
{\sc T.~Rees, H.~S. Dollar, and A.~J. Wathen}, {\em Optimal solvers for
  {PDE}-constrained optimization}, SIAM J. Sci. Comput., 32 (2010),
  pp.~271--298.

\bibitem{Rees20101}
{\sc T.~Rees and M.~Stoll}, {\em Block-triangular preconditioners for
  {PDE}-constrained optimization}, Numer. Linear Algebra Appl., 17 (2010),
  pp.~977--996.

\bibitem{Sogn2019}
{\sc J.~Sogn and W.~Zulehner}, {\em Schur complement preconditioners for
  multiple saddle point problems of block tridiagonal form with application to
  optimization problems}, IMA J. Numer. Anal., 39 (2019), pp.~1328--1359.

\bibitem{stoer2002}
{\sc J.~Stoer and R.~Bulirsch}, {\em Introduction to Numerical Analysis},
  Springer-Verlag, New York, 3rd~ed., 2002.

\bibitem{Stoll2014}
{\sc M.~Stoll}, {\em One-shot solution of a time-dependent time-periodic
  {PDE}-constrained optimization problem}, IMA J. Numer. Anal., 34 (2014),
  pp.~1554--1577.

\bibitem{Stoll2013}
{\sc M.~Stoll and A.~Wathen}, {\em All-at-once solution of time-dependent
  {S}tokes control}, J. Comput. Phys., 232 (2013), pp.~498--515.

\bibitem{Tao2018}
{\sc M.~Tao and X.-M. Yuan}, {\em On {G}lowinski’s open question on the
  alternating direction method of multipliers}, J. Optimiz. Theory Appl., 179
  (2018), pp.~163--196.

\bibitem{wang2022}
{\sc N.-N. Wang and J.-C. Li}, {\em On parameterized block symmetric positive
  definite preconditioners for a class of block three-by-three saddle point
  problems}, J. Comput. Appl. Math., 405 (2022), p.~113959.

\bibitem{xie2020}
{\sc X.~Xie and H.-B. Li}, {\em A note on preconditioning for the 3×3 block
  saddle point problem}, Comput. Math. Appl., 79 (2020), pp.~3289--3296.

\bibitem{ying1983}
{\sc L.-A. Ying and S.~N. Atluri}, {\em A hybrid finite element method for
  {S}tokes flow: {P}art {II}-{S}tability and convergence studies}, Comput.
  Methods Appl. Mech. Engrg., 36 (1983), pp.~36--60.

\bibitem{F2011}
{\sc F.~Zhang}, {\em Hermitian Matrices. In: Matrix Theory}, Springer, New
  York, 2011.

\bibitem{zhang2013}
{\sc G.-F. Zhang and Z.~Zheng}, {\em Block-symmetric and block-lower-triangular
  preconditioners for {PDE}-constrained optimization problems}, J. Comput.
  Math., 31 (2013), pp.~370--381.

\end{thebibliography}

\end{document}